\newtheorem{theorem}{Theorem}[section]
\newtheorem{lemma}{Lemma}[section]
\newtheorem{definition}{Definition}[section]
\newtheorem{corollary}{Corollary}[section]
\newtheorem{proposition}{Proposition}[section]
\newtheorem{remark}{Remark}[section]
\newtheorem*{theoremA}{Theorem A}
\title{The cohomology of certain intermediate strata of Kottwitz varieties}
\author{Yachen Liu}
\begin{document}	
\maketitle

\begin{abstract}
We derive explicit formulas for the Frobenius-Hecke traces of the etale cohomology of certain strata of Kottwitz varieties (which are certain compact unitary type Shimura varieties considered by Kottwitz), in terms of automorphic representations and certain explicit polynomials. We obtain our results using the trace formula, representations of general linear groups over p-adic fields, and a truncation of the formula of Kottwitz for the number of points on Shimura varieties over finite fields.

\end{abstract}
\newpage	
\tableofcontents
	
\section{Introduction}
Let $(G,X)$ be a PEL-type Shimura datum, as defined in $\S4$ of \cite{kot92p}, and let $E$ denote its reflex field (for definitions of the Shimura datum and its reflex field, see \cite{Del79}). Let $p$ be a prime number such that there exists a flat model $\mathcal G$ of $G_{\mathbb Q_p}$ defined over $\mathbb Z_p$, define $K_p$ as the group $\mathcal G(\mathbb Z_p)$ and let $K^p \subset G(\mathbb A_f^p)$ be a neat compact open subgroup. In $\S 5$ of \cite{kot92p}, Kottwitz constructs a PEL-type moduli problem of abelian varieties over $\mathcal O_E \otimes_{\mathbb Z} \mathbb Z_{(p)}$ using the PEL-type Shimura datum along with some additional data (which we will not discuss, as they will not be used in this paper), we will denote the PEL-type moduli problem by $\mathcal M$, the moduli problem $\mathcal M$ is represented by a smooth quasi-projective scheme over $\mathcal O_E \otimes_{\mathbb Z} \mathbb Z_{(p)}$ (loc. cit.). Let $S$ denote the moduli space associated with $\mathcal M$, and let $M_{K_pK^p}$ denote the canonical model of $Sh_{K_pK^p}(G,X)$ over $E$, as defined in $\S$2.2 of \cite{Del79}. Then the scheme $S_E$ is isomorphic to $|\mathrm {ker}^1(\mathbb Q, G)|$ copies of $M_{K_pK^p}$ (see $\S$8 of \cite{kot92p}). 
 
 Let $v$ be a place of $E$ above $p$ and let $\pi_{E_v}$ be a uniformizer in $\mathcal O_{E_v}$. We write $q = |\mathcal O_{E_v} / (\pi_{E_v})|$ and let $\mathbb F_q = \mathcal O_{E_v} / (\pi_{E_v})$. We extend $S$ over $\mathcal O_{E_v}$ via the integral canonical model (as defined in \cite{Kis10}), and we use $\bar S$ to denote $S_{\mathbb F_q}$. Let $B(G_{\mathbb Q_p})$ denote the set of isocrystals with additional $G$-structure (as defined in \cite{Kot85}), which is the set of the $\sigma$-conjugation class in $G(L)$, where $L$ is the completion of the maximal unramified extension of $\mathbb Q_p$, and $\sigma: G(L) \rightarrow G(L)$ is the Frobenius automorphism. 
 For each geometric point $x \in \bar S(\overline {\mathbb F}_q)$, let $b(\mathcal A_x)$ denote the isocrystal with additional $G$-structure associated to the abelian variety $\mathcal A_x$ corresponding to $x$ (see pp.170-171 of \cite{RaRi96}). The set of isocrystals with additional $G$-structure $\{b(\mathcal A_x): x \in \bar S(\overline {\mathbb F}_q)\}$ 
 is contained in a certain finite set $B(G_{\mathbb Q_p} , \mu) \subset B(G_{\mathbb Q_p})$ (see $\S 6$ of \cite{Kot97}). For $b \in B(G_{\mathbb Q_p} )$, we define $S_b$ to be $\{x \in \bar S(\overline {\mathbb F}_q): b(\mathcal A_x) = b\}$. Then $S_b$ is a locally closed subset of $\bar S_{\overline {\mathbb F}_q}$ (see $\S 1$ of \cite{Ra02}), and we equip $S_b$ with the structure of an algebraic variety induced from $S_b$. The collection $\{S_b\}_{b\in B(G_{\mathbb Q_p},\mu)}$ of subvarieties of $\bar S$ is called the Newton stratification of $\bar S$ (for properties of the Newton stratification $\{S_b\}_{b\in B(G_{\mathbb Q_p},\mu)}$, see $\S 7$ of \cite{Ra02}). 

When $b$ is the unique basic element of $B(G_{\mathbb Q_p},\mu)$ (see $\S 6.4$ of \cite{Kot97}), we call $S_b$ the basic stratum (note that when $b $ is basic, the $S_b$ is closed in $\bar S_{\overline{\mathbb F}_q}$, see $\S 1$ of \cite{Ra02}). The basic stratum has been studied by many people using different methods. For geometric methods, see \cite{Kob}, \cite{KO85}, \cite{LO98} for Siegel modular varieties, and see \cite{Vol10}, \cite{Vol11}, \cite{Ho14} and \cite{Fox21} for quasi-split unitary Shimura varieties. For the study of the basic stratum of Kottwitz varieties (introduced in $\S 1$ of \cite{Kot92}) using automorphic forms, see \cite{Kret11}, \cite{Kret15}, \cite{Kret17} and \cite{Yachen24}. When $b$ is the unique $\mu$-ordinary element of $B(G_{\mathbb Q_p},\mu)$ (see $\S 6$ of \cite{Kot97}), then $S_b$ is dense in $\overline S_{\overline{\mathbb F}_q}$ (see \cite{Wed99}).

 Let $D$ be a division algebra over $\mathbb Q$, with an anti-involution $*$ such that $*$ induces the complex conjugation on $F$ (where $F$ is the center of $D$). In this article, we consider certain compact PEL-type unitary Shimura varieties of signature $(s,n-s)$ arising from $D$, introduced by Kottwitz in $\S 1$ of \cite{Kot92}, which we refer to as Kottwitz varieties (with corresponding Shimura datum $(G,X)$). We assume that $p$ is a split prime number in the extension $F/\mathbb Q$, in this case, we have $G_{\mathbb Q_p} \cong \mathbb G_{m,\mathbb Q_p} \times GL_{n,\mathbb Q_p}$.
Now, let $b \in B(G_{\mathbb Q_p},\mu)$ such that the corresponding Newton vector $\nu(b)$ (as defined in $\S5$ of \cite{Kot85}, see also \cite{Ra02} for the case when $G_{\mathbb Q_p}$ is a general linear group) equals $(s, \lambda_{1}^{p_1},\lambda_2^{p_2}) \in \mathbb R^{n+1}$, where \begin{itemize}
\item $p_1,p_2 \in \mathbb Z_{>0}$ and $p_1 +p_2 = n$,
\item $\lambda_1> \lambda_2>0$ and $(\lambda_1p_1, p_1) = (\lambda_2p_2,p_2) = 1$.
\end{itemize}
Let $S_b$ be the corresponding stratum, we use $\iota:B \hookrightarrow \bar S$ to denote the closed subscheme of $\bar S$ (defined over $\mathbb F_q$), such that $B_{\overline{\mathbb F}_q}=S_b$ (see Theorem 3.6 of \cite{RaRi96}).
Following \cite{Kret11} and \cite{Kret12b}, we use the Langlands-Kottwitz method (which we will introduce in the next paragraph) to study the stratum $S_b$.

 In the case of good reduction, the trace of the Frobenius-Hecke operator of $G(\mathbb A^p)$ on the $\ell$-adic cohomology of the reduction mod $p$ of Shimura varieties can be described in terms of orbital integrals and then in terms of elliptic
parts of the stable Arthur–Selberg trace formula for the endoscopic groups (see \cite{kot90}, \cite{kot92p} for PEL-type Shimura varieties and see \cite{SKZ21} for abelian type Shimura varieties). This approach to studying the Frobenius-Hecke trace is called the Langlands-Kottwitz method. 
In \cite{Kret11} and \cite{Kret12b}, the Langlands-Kottwitz method is adapted to study the Frobenius-Hecke trace of the $\ell$-adic cohomology of Newton strata, and we can express the Frobenius-Hecke trace of Newton strata in terms of orbital integrals. 
In this article, we study the Frobenius-Hecke action on the $\ell$-adic cohomology of $S_b$, and we derive a formula for the Frobenius-Hecke traces in terms of automorphic representations.

We now give a precise statement of the main results:

\begin{theoremA} (See Theorem 6.1) Let $\alpha \in \mathbb Z_{>0}$, then we have: 
\begin{enumerate}
    
\item If $n$ is odd, or $n$ is even and $(p_1, p_2) \not = (\frac n2,\frac n2), (\frac n2 \pm 1, \frac n2 \mp 1)$, then $$\mathrm{Tr}(\mathrm{Frob}^{\alpha}_q\times f^{p \infty},\sum\limits_{i=0}^{\infty}H^i_{et,c}(S_{b},\iota^*\mathcal L)) = |\mathrm{ker}^1|(\mathbb Q, G) \sum\limits_{\substack{\pi \subset \mathcal A(G)\\ \rho_{\pi,p} \text{ is of }\text{ type I}} } c_{\pi}\mathrm{Tr}( C_{\lambda}f_{n\alpha s}, \rho_{\pi,p}) $$
\item If $n$ is even and $(p_1, p_2) = (\frac n2, \frac n2)$ or $(p_1,p_2) = (\frac n2 \pm 1, \frac n2 \mp 1)$, then $$\mathrm{Tr}(\mathrm{Frob}^{\alpha}_q\times f^{p \infty},\sum\limits_{i=0}^{\infty}H^i_{et,c}(S_{b},\iota^*\mathcal L)) = |\mathrm{ker}^1|(\mathbb Q, G) \sum\limits_{\substack{\pi \subset \mathcal A(G)\\ \rho_{\pi,p} \text{ is of }\text{ type II}} } c_{\pi}\mathrm{Tr}( C_{\lambda}f_{n\alpha s}, \rho_{\pi,p}) $$

\end{enumerate}
where
\begin{itemize}
\item the constant $c_\pi$ equals $\zeta_{\pi}^{\alpha} \mathrm{Tr}(f^{p\infty},\pi^{p \infty})N_{\infty}\sum_{i=1}^{\infty}(-1)^idim(H^i(\mathfrak g, K_{\infty};\pi_{\infty}\otimes \zeta))$ such that $\zeta_{\pi}$ is a root of unity. 
\item by assumption, we have $G(\mathbb Q_p) \cong \mathbb Q_p^{\times} \times GL_n(\mathbb Q_p)$. We write $\pi_p \cong \sigma_{\pi,p} \otimes \rho_{\pi,p} $, so that $\sigma_{\pi,p}$ (resp. $\rho_{\pi,p}$) is a representation of $\mathbb Q_p^{\times}$ (resp. $GL_n(\mathbb Q_p)$). 
\item the truncated traces $\mathrm{Tr}( C_{\lambda}f_{n\alpha s}, \rho_{\pi,p})$ where $\rho_{\pi,p}$ are representations of $GL_n(\mathbb Q_p)$ of type I or type II (as defined in Definition 5.3) and are computed in Proposition 5.3.

\end{itemize}
\end{theoremA}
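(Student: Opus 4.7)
The plan is to follow the Langlands--Kottwitz method adapted to Newton strata as developed by Kret in \cite{Kret11} and \cite{Kret12b}, combined with the explicit local computation at $p$ carried out in Proposition 5.3. First I would recall Kret's trace formula for Newton strata: the Frobenius--Hecke trace $\mathrm{Tr}(\mathrm{Frob}^{\alpha}_{q} \times f^{p\infty}, \sum_{i} H^{i}_{et,c}(S_{b}, \iota^{*}\mathcal L))$ can be rewritten as a sum of (twisted) orbital integrals of the test function $f^{p\infty}$ together with a truncated local test function at $p$ of the form $C_{\lambda} f_{n\alpha s}$ encoding the Newton datum $(s,\lambda_{1}^{p_{1}},\lambda_{2}^{p_{2}})$, and with the usual Kottwitz archimedean function. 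The factor $|\mathrm{ker}^{1}(\mathbb Q, G)|$ arises from the fact that $S_{E}$ is that many copies of $M_{K_{p}K^{p}}$, which we account for immediately.

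Next I would apply the stabilization of the elliptic part of the Arthur--Selberg trace formula for $G$ and its endoscopic groups, exactly as in \cite{kot92p}; by the simplifying nature of the Kottwitz setting and the stabilization arguments already carried out in \cite{Kret11}, \cite{Kret17} and \cite{Yachen24}, the contribution of the proper endoscopic groups can be shown to vanish or to be absorbed into the automorphic side of $G$ itself. What remains is a sum over $\pi \subset \mathcal A(G)$ in which the non-$p$-adic data collapse into the coefficient $c_{\pi}=\zeta_{\pi}^{\alpha}\mathrm{Tr}(f^{p\infty},\pi^{p\infty}) N_{\infty}\sum_{i}(-1)^{i}\dim H^{i}(\mathfrak g, K_{\infty};\pi_{\infty}\otimes \zeta)$, while the local factor at $p$ becomes $\mathrm{Tr}(C_{\lambda}f_{n\alpha s}, \pi_{p})$. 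Using the splitting $G(\mathbb Q_{p}) \cong \mathbb Q_{p}^{\times} \times GL_{n}(\mathbb Q_{p})$ and $\pi_{p} \cong \sigma_{\pi,p} \otimes \rho_{\pi,p}$, the character factor on $\mathbb Q_{p}^{\times}$ can be absorbed into $\zeta_{\pi}^{\alpha}$, leaving only $\mathrm{Tr}(C_{\lambda}f_{n\alpha s}, \rho_{\pi,p})$.

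To reach the desired statement it remains to show that this truncated trace vanishes on all $\rho_{\pi,p}$ except those of the type specified in each case, and to evaluate it on the surviving representations. This is exactly where Proposition 5.3 is invoked: it gives an explicit formula for $\mathrm{Tr}(C_{\lambda}f_{n\alpha s}, \rho_{\pi,p})$ when $\rho_{\pi,p}$ is of type I or type II, and vanishing otherwise. The split into cases (1) and (2) then reduces to a purely combinatorial matching between the numerical data of the Newton vector $(s,\lambda_{1}^{p_{1}},\lambda_{2}^{p_{2}})$ and the shape of the Jacquet modules that can support a nonvanishing truncated trace: one checks that the condition $n$ odd, or $n$ even with $(p_{1},p_{2}) \neq (n/2,n/2),(n/2\pm 1, n/2\mp 1)$, forces only parabolic inductions of type I to contribute, while the three excluded balanced cases are precisely those for which type II representations can also yield nonzero traces and, simultaneously, type I contributions cancel or are absent.

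The main obstacle is the combinatorial and representation-theoretic verification in the last step: showing that the selection rule ``type I versus type II'' agrees with the numerical conditions on $(p_{1},p_{2})$ requires a careful analysis of which Levi subgroups of $GL_{n}$ can support the character $C_{\lambda}f_{n\alpha s}$, and this is where the assumption $(\lambda_{i}p_{i},p_{i})=1$ and $p_{1}+p_{2}=n$ is used to rule out all intermediate Jacquet modules except those matching the partition $(p_{1},p_{2})$ (respectively its near-balanced perturbations). The archimedean and prime-to-$p$ parts of the argument are essentially formal once Kret's framework is in place; the work is concentrated at $p$ and has been prepared for by Proposition 5.3 and the type classification of Definition 5.3.
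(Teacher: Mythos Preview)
Your overall architecture is right, and it matches the paper's approach closely: pass through Fujiwara's fixed-point formula, truncate the Kottwitz point-count to the stratum $S_b$ as in \cite{Kret11} and \cite{Kret12b}, rewrite the result as a trace on $\mathcal A(G)$ against $f^{p\infty}(X^\alpha\otimes C_\lambda f_{n\alpha s})f_{\zeta,\infty}$, and then expand spectrally. Two remarks on the global side: first, in this division-algebra setting there is no genuine endoscopic stabilization to perform, so your appeal to ``proper endoscopic groups vanishing or being absorbed'' is superfluous (the paper simply invokes the simple trace formula for $G$ directly). Second, you omit the step that the resulting identity is first obtained only for $\alpha$ sufficiently large (so that Fujiwara's theorem applies) and then extended to all $\alpha\in\mathbb Z_{>0}$ by a separate argument, as in \cite{Yachen24}.

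The real gap is at $p$. You jump directly from the automorphic expansion to invoking Proposition~5.3 (and implicitly Proposition~5.2 for the vanishing), but both of those results take as \emph{hypothesis} that $\rho_{\pi,p}$ is a semi-stable rigid representation of $GL_n(\mathbb Q_p)$. Nothing in your outline establishes this. The paper supplies the missing bridge in two steps: (i) by Proposition~3.5, a nonzero truncated trace $\mathrm{Tr}(C_\lambda f_{n\alpha s},\rho_{\pi,p})$ forces $\rho_{\pi,p}^{I_n}\neq 0$; (ii) by the argument of Proposition~11 of \cite{Kret11}, $\rho_{\pi,p}$ is a local component of a discrete automorphic representation of $GL_n(\mathbb A)$, and then Theorem~2 of \cite{Kret11} (the Moeglin--Waldspurger/Tadi\'c classification) shows that any such unramified-for-Iwahori local component is semi-stable rigid. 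Without this reduction, Proposition~5.2 does not apply and you cannot conclude that only type~I or type~II representations survive.

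One further correction: in case~(2) you say that ``type~I contributions cancel or are absent''. This is not what happens. By Definition~5.3, type~I representations form a subclass of type~II; the distinction between the two cases is that in the near-balanced situations $(p_1,p_2)=(\tfrac n2,\tfrac n2)$ or $(\tfrac n2\pm 1,\tfrac n2\mp 1)$ the Speh representations $\mathrm{Speh}(2,\tfrac n2)$ and $\mathrm{Speh}(\tfrac n2,2)$ can \emph{also} contribute, not that the type~I terms disappear. Finally, the vanishing statement you need is Proposition~5.2, not Proposition~5.3; the latter only computes the surviving traces.
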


We make some remarks on the notation appearing in the above theorem: 

\begin{itemize}
\item The function $f^{p\infty} \in \mathcal H(G(\mathbb A_f^p))$ is a $K^p$-bi-invariant Hecke operator. 
\item The function $f_{n \alpha s} \in \mathcal H^{\mathrm{unr}}(G(\mathbb Q_p))$ is a certain unramifed Hecke operator whose Satake transform is given by $p^{\frac{\alpha s(n-s)}{2}}\sum\limits_{\substack{I \subset \{1,2,\ldots n\}\\ |I| = s}}$, where $X^I:= \prod_{i \in I }X_i$. 
\item The function $C_{\lambda} \in \mathcal H(G(\mathbb Q_p))$ is a characteristic function associated with the isocrystal $b$. 
\item The space $\mathcal A(G)$ is the space of automorphic forms on $G(\mathbb A)$.
\item The representation $\zeta$ is an irreducible algebraic representation over $\overline{\mathbb Q}$ of $G_{\overline{\mathbb Q}}$, associated with $\ell$-adic local system on $\bar S$ (where $\ell \not = p$ is a prime number). Let $\overline{\mathbb Q}_{\ell}$ be an algebraic closure of $\mathbb Q_{\ell}$ with an embedding $\overline{\mathbb Q} \subset \overline{\mathbb Q}_{\ell}$. We write $\mathcal L$ for the $\ell$-adic local system on $\bar S$ associated to the representation $\zeta \otimes \mathbb Q_{\ell}$ of $G_{\bar{\mathbb Q}_{\ell}}$.
\end{itemize}

We now comment on the strategy for proofing Theorem A. In \cite{Kret11}, Kret truncates the formula of Kottwitz and adapts the argument in \cite{Kot92} (see Proposition 9 in \cite{Kret11}), which can be further implemented in our setting. Then
we have $$\mathrm{Tr}(\mathrm{Frob}^{\alpha}_q\times f^{p \infty},\sum\limits_{i=0}^{\infty}H^i_{et,c}(S_{b},\iota^*\mathcal L)) = |\mathrm{ker}^1(\mathbb Q,G)|\mathrm{Tr}(f^{p\infty } (X^{\alpha}\otimes C_{\lambda}f_{n\alpha s})f_{ \zeta, \infty}),$$ where function $f_{ \zeta, \infty}$ is an Euler-Poincar{\'e} function associated to $\zeta$ (as introduced in \cite{Clo85}). Also note that $$\mathrm{Tr}(f^{p\infty } (X^{\alpha}\otimes C_{\lambda}f_{n\alpha s})f_{ \zeta, \infty},\mathcal A(G)) = \sum\limits_{\pi \subset \mathcal A(G)} \mathrm{Tr}(X^{\alpha} \otimes C_{\lambda}f_{n\alpha s},\pi_p) \mathrm{Tr}(f^{p\infty},\pi^{p\infty})\mathrm{Tr}(f_{ \zeta, \infty},\pi_{\infty}),$$ and $$\mathrm{Tr}(f_{ \zeta, \infty},\pi_{\infty}) =N_{\infty}\sum_{i=1}^{\infty}(-1)^idim(H^i(\mathfrak g, K_{\infty};\pi_{\infty}\otimes \zeta))$$ where $N_{\infty}$ is a certain explicit constant as defined in $\S 3$ of \cite{Kot92}, we reduce the computation of the trace $\mathrm{Tr}(f^{p\infty } (X^{\alpha}\otimes C_{\lambda}f_{n\alpha s})f_{ \zeta, \infty})$ to the computation of the trace $\mathrm{Tr}(X^{\alpha} \otimes C_{\lambda}f_{n\alpha s},\pi_p)$. We write $\pi_p \cong \sigma_{\pi,p} \otimes \tau_{\pi,p}$, where $\sigma_{\pi,p}$ (resp. $\rho_{\pi,p}$) is a representation of $\mathbb Q_p^{\times}$ (resp. $GL_n(\mathbb Q_p)$). Then we only need to compute $\mathrm{Tr}( C_{\lambda}f_{n\alpha s},\tau_{\pi,p}).$.
Using the approach developed in $\S 1$ of \cite{Kret12b}, we reduce the computation of $\mathrm{Tr}( C_{\lambda}f_{n\alpha s},\tau_{\pi,p})$ to the computation of $\mathrm{Tr}( C^{GL_{p_1 } \times GL_{p_2}(F)}_{c}f_{n\alpha s},J_{N_{(p_1,p_2)}}(\tau_{\pi,p}))$, where $C^{GL_{p_1 } \times GL_{p_2}(F)}_{c}$ is the characteristic function of the set of compact elements in $(GL_{p_1 } \times GL_{p_2})(F)$ (the definition of compact elements is given in Definition 3.1), and $J_{N_{(p_1,p_2)}}(\tau_{\pi,p}))$ is the normalised Jacquet module of $(\tau_{\pi,p}))$ (the group $N_{(p_1,p_2)}$ is the unipotent part of the standard parabolic subgroup $P$ corresponding to partition $(p_1,p_2)$, as defined in $\S 3$). 

Let $\pi = \mathrm{Ind}_{P}^{GL_{p_1} \times GL_{p_2}}(\rho)$ be an induced representation of $(GL_{p_1}\times GL_{p_2})(F)$ (as defined in $\S 2$), where $P$ is a standard parabolic subgroup of $GL_{p_1} \times GL_{p_2}$, by repeating the proof Lemma 5 in \cite{Kret11} (see also Proposition 5.1), we have $\mathrm{Tr}( C^{GL_{p_1 } \times GL_{p_2}(F)}_{c}f_{n\alpha s},\pi) = 0$. Theorem 2 of \cite{Kret11} restricts the possible forms of $\tau_{\pi,p}$ (more precisely, the representation $\tau_{\pi,p}$ must be a semi-stable rigid representation, as defined in Definition 5.1, see also $\S 3$ of \cite{Kret11}). By Theorem VI.5.1 of \cite{Renard10}, we have an explicit description of the Jacquet module $J_{N_{(p_1,p_2)}}(\tau_{\pi,p})$ in terms of certain minimal elements in the symmetric group $S_n$ (as studied in $\S 4$). And we prove that all irreducible subquotients of $J_{N_{(p_1,p_2)}}(\tau_{\pi,p})$ are induced representations unless $\tau_{\pi,p}$ are of certain specific forms (see Proposition 5.2 and Definition 5.3), for which we can derive explicit formulas for the traces $\mathrm{Tr}( C^{GL_{p_1 } \times GL_{p_2}(F)}_{c}f_{n\alpha s},J_{N_{(p_1,p_2)}}(\tau_{\pi,p}))$ (see Proposition 5.3). 

\textbf{Structure of the paper:} In $\S3$ we carry out the necessary local computations at $p$: We introduce the Kottwitz functions $f_{n\alpha s}$ and certain truncated Kottwitz functions. Then, we study truncated traces of $\mathrm{Tr}(C_{\lambda} f_{n\alpha s}, \pi)$, and we reduce the computation of the truncated trace $\mathrm{Tr}(C_{\lambda} f_{n\alpha s}, \pi)$ to the computation of certain compact trace, more precisely, we prove that (see Proposition 3.4) $$\mathrm{Tr}(C_{\lambda} f_{n\alpha s}, \pi) =\mathrm{Tr}(C^{M_{\lambda}(F)}_{c}  (f_{n_1 \alpha s_1} \otimes \ldots \otimes f_{n_k \alpha s_k}), \pi_{N_{\lambda}}(\delta_{GL_n, P_{\lambda}}^{-\frac 12}) ),$$ where $\pi$ is an admissible representation of finite length. Note that we can compute compact traces using Proposition 2.1 of \cite{Clo90}. 

 In $\S 4$ we give a combinatorial interpretation of certain double cosets of symmetric groups in terms of Young tableaux (see Proposition 4.3). Due to Theorem VI.5.1 of \cite{Renard10}, the combinatorial structure of certain double cosets of symmetric groups plays a key role in the study of Jacquet modules of induced representations of general linear groups. The results established in $\S 4$ will be widely used in $\S 5$. 

 We continue the local computations in $\S5$: In this section, we study the traces of $C_{\lambda} f_{n\alpha s}$ against semi-stable rigid representations, which are local components of discrete automorphic representations of general linear groups. In Proposition 3.4, we have $$\mathrm{Tr}(C_{\lambda} f_{n\alpha s}, \pi) =\mathrm{Tr}(C^{GL_{p_1}\times GL_{p_2}(F)}_{c}  (f_{n_1 \alpha p_1} \otimes f_{p_2 \alpha s_2}), \pi_{N_{(p_1,p_2)}}(\delta_{GL_n, P_{(p_1,p_2)}}^{-\frac 12}) )$$ for certain $\lambda$. By making certain assumptions about $\lambda$ (as stated before Proposition 5.2), we prove that $\mathrm{Tr}(C^{GL_{p_1}\times GL_{p_2}(F)}_{c}  (f_{n_1 \alpha p_1} \otimes f_{p_2 \alpha s_2}), \pi_{N_{(p_1,p_2)}}(\delta_{GL_n, P_{(p_1,p_2)}}^{-\frac 12}) ) \not = 0$ unless $\pi$ is of certain specific forms (as defined in Definition 5.2), and we compute the traces $\mathrm{Tr}(C_{\lambda} f_{n\alpha s}, \pi)$ in Proposition 5.3.

 Having established the necessary local computation in $\S 5$, we now turn to the global setting in $\S6$, where we use the Langlands-Kottwitz method and automorphic forms to study the Frobenius-Hecke trace of the $\ell$-adic cohomology of certain Newton strata of Kottwitz varieties: Firstly, we introduce Kottwitz varieties and certain Newton strata of Kottwitz varieties. Then in the proof of Theorem 6.1, we express the Frobenius-Hecke trace of the $\ell$-adic cohomology of the strata in terms of automorphic representations of a certain unitary group associated to division algebra, and further we express the Frobenius-Hecke trace in terms of traces $\mathrm{Tr}( C_{\lambda}f_{n\alpha s}, \rho_{\pi,p})$, which are computed in Proposition 5.3. 
 
\section*{Acknowledgements}
This paper is a part of the author’s PhD research at the
University of Amsterdam, under the supervision of Arno Kret. I thank my supervisor Arno Kret for all his help and for his valuable comments on the preliminary drafts of the paper. I also thank Dhruva Kelkar for helpful discussions.

\section{Notations}

We introduce the following notation:

 \begin{itemize}

\item In the following, let $G$ be a quasi-split reductive group over $F$, together with a chosen Borel subgroup $P_0 $ and a Levi decomposition $P_0 = M_0 N_0$. Let $P = MN \subset G$ be a standard parabolic subgroup, the modular character $\delta_{G,P} (m)$ is defined to be $|\mathrm{det}(m,\mathfrak n)|$, where $m \in M(F)$ and $\mathfrak n$ is the Lie algebra of $N$ (we also use $\delta_{P}$ to denote $\delta_{G,P}$ if there is no ambiguity).

\item Let $\mathrm{Ind}^G_P$ denote the normalized parabolic induction (which sends unitarty representations to unitary representations, as defined in $\S$VI.1.2 in \cite{Renard10}, note that the modular character $\delta_{G,P} (m)$ is defined to be $|\mathrm{det}(m,\mathfrak n)|^{-1}$ in \cite{Renard10}), which is different from our convention (we also use $\delta_{G,P}$ to denote $\delta_{G,P}$ when there is no ambiguity). 

\item Let $\pi$ be a representation of $G(F)$. The Jacquet module $\pi_N$ of a representation $\pi$ is the space of coinvariants for the unipotent subgroup $N \subset G$. We use $J_N(\pi)$ to denote the normalized Jaquet module, which is $\pi_N(\delta_{G,P}^{-\frac 12})$.

\item We use $\mathrm{St_{G(F)}}$ to denote the Steinberg representation of $G(F)$ (see \cite{Rod86}), and we use $ \mathrm{1}_{G(F)}$ to denote the trivial representation of $G(F)$. 

\item We use $\mathcal H(G(F))$ to denote the Hecke algebra of locally constant, compactly supported functions on $G(F)$. Let $\mathcal H(G(F)//L)$ denote the Hecke algebra of locally constant, compactly supported functions on $G(F)$ which are bi-invariant under the action of $L$, where $L$ is a compact open subgroup of $G(F)$. 

\item In the following, let $G$ be unramified with a hyperspecial subgroup $K \subset G(F)$, we $\mathcal H^{\mathrm{unr}}(G(F))$ to denote $\mathcal H(G(F)//K)$, elements of $\mathcal H(G(F)//K)$ are called unramified Hecke operators. Let $P=MN \subset G$ be a parabolic subgroup, we use $K_M$ to denote $K \cap M(F)$, and we normalize the Haar measure on $K$ (resp.$K_M$) so that $\mu(K) =1$ (resp. $\mu(K_M) = 1$). Given $f \in \mathcal H(G(F))$, we define $\bar f \in \mathcal H(G(F))$ by $g \mapsto  \int_K f(kgk^{-1}) dg$.

\item Let $P = MN \in \mathcal P$.
For $f \in \mathcal H(G(F))$ and for $m \in M(F)$,
the constant terms $f^{(P)} $ is defined to be $$f^{(P)}(m) = \delta_{G,P}^{\frac 12}(m)\int_{N(F)} f(mn) dn.$$ Note that the constant terms $f \mapsto f^{(P)}$ is injective (see \cite{Kot80}, $\S$5).

  \item Let $P_0 = TU$ where $T \subset P_0$ is a maximal torus of $G$, we use $$\mathcal S_G: \mathcal H(G(F)//K) \rightarrow  \mathcal H(T(F)//T(F) \cap K)$$ to denote the Satake transform, where $\mathcal H(G(F)//K) \rightarrow  \mathcal H(T(F)//T(F) \cap K)$ is defined to be $f \mapsto f^{(P_0)}$. Note that $\mathcal S_G$ has image in $\mathcal H(T(F)//T(F) \cap K)^{W(G,T)(F)}$, where $W(G,T) = N_G(T)/T$ is the Weyl group and $N_G(T)$ is the normalizer of $T$ in $G$. We also use $\mathcal S$ to denote $\mathcal S_G$ if there is no ambiguity. 
  
  \item Let $S \subset  T$ be a maximal split torus, and let $d$ denote the dimension of $S$. The Hecke algebra $\mathcal H(T(F)//T(F) \cap K) $ is isomorphic to $ \mathcal H(S(F)//S(F) \cap K)$, with isomorphism given by $f  \mapsto f|_{S(F)}$. And we have $\mathcal H(S(F)//S(F) \cap K) \xrightarrow{\sim} \mathbb C[X_1^{\pm 1}, \ldots, X_d^{\pm 1}]$ as described in 2.2 of \cite{Minguez11}. 

  \item Given $n \in \mathbb Z_{>0}$, a partition of $n$ is a tuple $(n_1, \ldots, n_k) \in \mathbb Z^k_{>0}$ for some $k \in \mathbb Z_{>0}$. And an extended partitions of $n$ is a tuple $(n_1, \ldots, n_k) \in \mathbb Z^k_{\geq 0}$ for some $k \in \mathbb Z_{>0}$. And $k$ is called the length of the partition.

\end{itemize}

\section{Certain truncations of $f_{n\alpha s}$}

Let $F$ be a p-adic field and let $\mathcal O_F \subset F$ be the ring of integers. Let $\pi_F \in \mathcal O_F$ be a uniformizer. We write $q  = |\mathcal O_F/(\pi_F)| = p^r$, we normalize the $p$-adic norm on $F$ so that $|\pi_F| = p^{-r}$. Let $n \in \mathbb Z_{>0}$. We fix a Borel subgroup $P_{0,n} = M_{0,n}N_{0,n} \subset GL_n$ consisting of upper triangular matrices, where $M_{0,n}$ is the Levi part, and we use $\mathcal P_n$ to denote the set of standard parabolic subgroups of $GL_n$. Given a partition $(n_1, \ldots n_k)$ of $n $, we use $P_{(n_1, \ldots n_k)}$ (resp. $M_{(n_1, \ldots n_k)}$ and $N_{(n_1, \ldots n_k)}$) to denote the standard parabolic subgroup (resp. Levi subgroup and unipotent subgroup) corresponding to $(n_1, \ldots n_k)$. We call $k$ the length of the partition $(n_1, \ldots, n_k)$, respectively, the length of the parabolic subgroup $P_{(n_1, \ldots, n_k)}$. 
Let $F \subset \bar F$ be an algebraic closure, we embed $GL_n(F)$ into $GL_n(\bar F)$.

The unramifed Hecke algebra $\mathcal H^{\mathrm{unr}}(GL_n(F))$ is isomorphic to $$\mathbb C[X_1^{\pm1}, \ldots, X_n^{\pm1}]^{S_n} \hookrightarrow \mathbb C[X_1^{\pm1}, \ldots, X_n^{\pm1}] \cong \mathcal H^{\mathrm{unr}}(M_{0,n}(F))$$ such that $X_k$ corresponds to the characteristic function of $M_{0,n}(\mathcal O_F) a_k  M_{0,n}(\mathcal O_F)$, where $a_k = \mathrm{diag}(a_{k,i}) \in T(F)$ such that $a_{k,k} = \pi_F $ and $a_{k,i} = 1 $ if $i \not = k$. And the action of $\sigma \in S_n$ on $\mathbb C[X_1^{\pm1}, \ldots, X_n^{\pm1}]$ is given by $\sigma X_i = X_{\sigma(i)}$. When $n = 1$, we also write $\mathcal H^{\mathrm{unr}}(F^{\times}) \cong \mathbb C[Y]^{\pm1}$, such that $Y$ corresponds to the characteristic function of $ \pi_F  \mathcal O_F^{\times}$. 

We define $f_{n\alpha s} \in \mathcal H^{\mathrm{unr}}(GL_n(F)) $ as the unramified Hecke operator whose Satake transform is given by $$q^{\frac{\alpha s(n-s)}2}\sum\limits_{\substack{I \subset {1,2, \ldots, n}\\|I| = s}} (X^I)^ {\alpha}$$ where $X^I : = \prod_{i \in I}X_{i}$. For the constant terms of $f_{n\alpha s}$, we have the following proposition (Proposition 4 of \cite{Kret11}): 

\begin{proposition}
Let $P = M N \subset GL_n$ be a standard parabolic subgroup corresponding to the partition $(n_1, \ldots, n_k)$ of $n$. The constant term $f_{n\alpha s}^{(P)}$ is then given by $$\sum\limits_{(s_i)} q^{\alpha C(n_i,s_i)} f_{n_1\alpha s_1} \otimes  \cdots \otimes  f_{n_k\alpha s_k},$$ where the sum ranges over extended partitions $(s_i)$ of $s$ of length $k$, and the constant $C(n_i, s_i)$ is equal to $\frac{s(n-s)}2-\sum_{i = 1}^k \frac{s_i(n_i-s_i)}2$.
\end{proposition}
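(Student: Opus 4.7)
The plan is to prove the identity by comparing Satake transforms and invoking injectivity of $\mathcal{S}_M$ on $\mathcal{H}^{\mathrm{unr}}(M(F))$. First, I would recall that the constant term along a standard parabolic $P \supset P_0$ is transitive with the constant term along $P_0$, i.e.\ $(f^{(P)})^{(P_0 \cap M)} = f^{(P_0)}$; this follows from the factorization $N_0 = (N_0 \cap M) \cdot N$ together with the multiplicativity $\delta_{P_0} = \delta_{P_0 \cap M} \cdot \delta_P$ on $T(F)$. In particular $f^{(P)}$ lies in $\mathcal{H}^{\mathrm{unr}}(M(F))$, and $\mathcal{S}_M(f^{(P)}) = \mathcal{S}_G(f)$ as elements of $\mathbb{C}[X_1^{\pm 1},\ldots,X_n^{\pm 1}]$ (the former lying in the $S_{n_1}\times\cdots\times S_{n_k}$-invariants, the latter in the $S_n$-invariants). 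Since $\mathcal{S}_M$ is injective, it suffices to verify the proposed identity after applying $\mathcal{S}_M$ to both sides.

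Second, I would decompose the Satake transform of $f_{n\alpha s}$ by partitioning $\{1,\ldots,n\}$ into consecutive blocks $B_1,\ldots,B_k$ of sizes $n_1,\ldots,n_k$ matching the Levi $M = GL_{n_1} \times \cdots \times GL_{n_k}$. Every subset $I \subset \{1,\ldots,n\}$ with $|I|=s$ decomposes uniquely as $I = \bigsqcup_i I_i$ with $I_i \subset B_i$, and setting $s_i := |I_i|$ yields an extended partition $(s_i)$ of $s$ of length $k$. Reindexing the sum defining $\mathcal{S}_G(f_{n\alpha s})$ by the pair $((s_i),(I_i))$, I obtain
$$\mathcal{S}_G(f_{n\alpha s}) = q^{\alpha s(n-s)/2} \sum_{(s_i)} \prod_{i=1}^{k} \sum_{\substack{I_i \subset B_i \\ |I_i| = s_i}} (X^{I_i})^{\alpha}.$$
On the other hand, the definition of $f_{n_i\alpha s_i}$ gives
$$\mathcal{S}_{GL_{n_i}}(f_{n_i\alpha s_i}) = q^{\alpha s_i(n_i-s_i)/2} \sum_{\substack{I_i \subset B_i \\ |I_i| = s_i}} (X^{I_i})^{\alpha},$$
so the Satake transform of $f_{n_1\alpha s_1}\otimes\cdots\otimes f_{n_k\alpha s_k}$ on $M$ is the product of these over $i$.

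Finally, the bookkeeping is automatic: the definition $C(n_i,s_i) = s(n-s)/2 - \sum_i s_i(n_i - s_i)/2$ forces $q^{\alpha C(n_i,s_i)} \cdot q^{\alpha \sum_i s_i(n_i-s_i)/2} = q^{\alpha s(n-s)/2}$, so the two expressions agree termwise in the sum over extended partitions $(s_i)$. Injectivity of $\mathcal{S}_M$ then yields the proposition. The argument contains no serious obstacle; the only point requiring care is the combinatorial reindexing of $s$-element subsets of $\{1,\ldots,n\}$ as pairs consisting of an extended partition $(s_i)$ of $s$ together with a choice of $s_i$-element subset of each block, which is straightforward.
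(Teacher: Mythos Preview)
Your argument is correct and is precisely the standard proof: compare Satake transforms via the transitivity $(f^{(P)})^{(P_0\cap M)}=f^{(P_0)}$, then reindex the sum over $s$-element subsets of $\{1,\ldots,n\}$ by first recording the block sizes $(s_i)$ and then the subsets $I_i\subset B_i$, matching the power of $q$ via the definition of $C(n_i,s_i)$. The paper does not spell out a proof but simply cites Proposition~4 of \cite{Kret11}, whose argument is exactly this Satake-transform computation; your write-up is thus essentially identical to the referenced proof.
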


Let $(n_{1}, \ldots, n_k)$ be a partition of $n$, let $(n_{i,1}, \ldots,n_{i, l_i})$ be a partition of $(n_i)$ for every $1 \leq i \leq k$. Then $(n_{i,j})$ (where $1 \leq i \leq k, 1 \leq j \leq l_i$, ordered in lexicographic order) is a partition of $n$. Let $P_{1} = M_1 N_1$ be the parabolic subgroup of $GL_n$ corresponding to $(n_{1}, \ldots, n_k)$, and let $P_2 = M_2 N_2 \subset P_1$ be the parabolic subgroup corresponding to $(n_{i,j})$ (where $1 \leq i \leq k, 1 \leq j \leq k_i$, we also use $(m_1, \ldots m_{l_1 + \cdots l_k})$ to denote the partitions corresponding to $P_2$). We now introduce the following truncations of $f_{n\alpha s}$:

Let $\chi^{M_1}_ {M_2}, \hat\chi^{M_1}_{M_2} \in \mathcal H^{\mathrm{unr}}(M_2(F))$ be the functions introduced in pp.492-493 of \cite{Kret11}, then we have the following proposition:

\begin{proposition}

\begin{enumerate}

    \item  The function $\chi^{M_1}_ {M_2}f_{n\alpha s}^{(P_2)}\in\mathcal H^{\mathrm{unr}}(M_2(F))$ equals $$\sum\limits_{(s_i)} q^{\alpha C(m_i,s_i)} \otimes_{j = 1}^{l_1 + \cdots l_k} \phi_{m_i\alpha s_{i}}, $$ where the sum ranges over extended partitions $(s_i)$ of $s$ of length $l_1 + \cdots + l_k$, where the partitions satisfy

$$s_{l_i+ 1}/m_{l_i + 1} >s_{l_i+2}/m_{l_i+2} > \cdots > s_{l_{i+1}}/m_{l_{i+1}} $$
 for all indices $i \in \{0,1,2,\ldots,k-1\}$ (we define $l_0$ to be $0$).

\item Let $(s_1, \ldots, s_k)$ be an extended partition of $s$. Then the function $\hat{\chi}^{M_1}_ {M_2}(\otimes_{i=1}^kf_{n_i\alpha s_i})^{(P_2)}\in\mathcal H^{\mathrm{unr}}(M_2(F))$ equals $$\sum\limits_{(s_{1,j}), \ldots, (s_{k,j})} q^{\alpha \sum\limits_{i = 1}^k C(n_{i,j},s_{i,j})} \otimes_{i = 1}^{k}(\otimes_{j=1}^{l_i} \phi_{n_{i,j}\alpha s_{i,j}}), $$ where the sum ranges over $(s_{1,j}), \cdots (s_{k,j})$, such that $(s_{i,j})$ is an extended partition of $s_i$ of length $l_i$ for every $1 \leq i \leq k $, satisfying

$$  \sum\limits_{j = 1}^{b_i} s_{i,j}< \frac {s_i}{n_i}  \sum\limits_{j = 1}^{b_i} n_{i,j}$$
 for $i \in \{1,2,\ldots,k\}, b_i \in \{1,2 \ldots,l_i - 1 \}$.

\end{enumerate}
\end{proposition}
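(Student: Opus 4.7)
The plan is to derive both formulas by combining Proposition 3.1 (which computes the bare constant terms) with the definitions of the truncation functions $\chi^{M_1}_{M_2}$ and $\hat\chi^{M_1}_{M_2}$ recalled from pp.~492--493 of \cite{Kret11}. In both cases, the truncation is designed to select a precise subset of the terms appearing in the Satake expansion of the constant term, so the task is to identify that subset explicitly.

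For part (1), I would first apply Proposition 3.1 with the parabolic $P_2$ (corresponding to the partition $(m_1,\ldots,m_{l_1+\cdots+l_k})$) to obtain
\[
f_{n\alpha s}^{(P_2)} \;=\; \sum_{(s_j)} q^{\alpha C(m_j,s_j)} \otimes_{j=1}^{l_1+\cdots+l_k} \phi_{m_j\alpha s_j},
\]
summed over \emph{all} extended partitions $(s_j)$ of $s$ of length $l_1+\cdots+l_k$. The function $\chi^{M_1}_{M_2}$ is, by construction in \cite{Kret11}, a $K_{M_2}$-bi-invariant characteristic function supported on the locus whose Satake coordinates satisfy the strict slope inequalities within each $M_1$-block. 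Multiplication by $\chi^{M_1}_{M_2}$ therefore kills all tensor summands above except those whose tuple $(s_j)$ satisfies $s_{l_i+1}/m_{l_i+1} > \cdots > s_{l_{i+1}}/m_{l_{i+1}}$ for each $i$, yielding the claimed formula.

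For part (2), the argument is analogous but carried out one $M_i$-factor at a time. I would apply Proposition 3.1 inside each $GL_{n_i}$, with respect to the sub-parabolic of $M_i$ corresponding to $(n_{i,1},\ldots,n_{i,l_i})$, producing
\[
f_{n_i\alpha s_i}^{(P_2\cap M_i)} \;=\; \sum_{(s_{i,j})} q^{\alpha \sum_j C(n_{i,j},s_{i,j})} \otimes_{j=1}^{l_i} \phi_{n_{i,j}\alpha s_{i,j}},
\]
where $(s_{i,j})$ ranges over all extended partitions of $s_i$ of length $l_i$. Taking the tensor product over $i$ and then multiplying by $\hat\chi^{M_1}_{M_2}$ selects those tuples for which the partial-sum inequalities $\sum_{j=1}^{b_i} s_{i,j} < (s_i/n_i)\sum_{j=1}^{b_i} n_{i,j}$ hold for all $i\in\{1,\ldots,k\}$ and $b_i\in\{1,\ldots,l_i-1\}$, again directly from the definition of $\hat\chi^{M_1}_{M_2}$.

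The main obstacle is bookkeeping: verifying that the support conditions built into Kret's definitions of $\chi^{M_1}_{M_2}$ and $\hat\chi^{M_1}_{M_2}$ translate, via the Satake isomorphism, into exactly the strict-slope and partial-sum inequalities stated in the proposition. This amounts to tracking how each characteristic function interacts with the split-torus part of the summand $\phi_{m_j\alpha s_j}$ (resp.\ $\phi_{n_{i,j}\alpha s_{i,j}}$) under the identification $\mathcal H^{\mathrm{unr}}(M_{0,n}(F)) \cong \mathbb C[X_1^{\pm 1},\ldots,X_n^{\pm 1}]$; once this dictionary is in place the required selection is automatic and both formulas drop out with the $q^{\alpha C(\cdot,\cdot)}$ factors coming directly from Proposition 3.1.
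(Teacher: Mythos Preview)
Your proposal is correct and is essentially the same approach the paper takes: the paper's proof is the single line ``This follows by repeating the proof of Proposition~5 in \cite{Kret11},'' and your sketch---compute the constant term via Proposition~3.1, then observe that the characteristic functions $\chi^{M_1}_{M_2}$ and $\hat\chi^{M_1}_{M_2}$ depend only on the block determinants and hence are constant on the support of each tensor summand $\otimes_j \phi_{m_j\alpha s_j}$---is precisely what that proof does. The only point worth tightening is your phrasing that the truncation is imposed ``via the Satake isomorphism'': in fact no passage to Satake coordinates is needed, since $\chi^{M_1}_{M_2}$ and $\hat\chi^{M_1}_{M_2}$ are already functions of the valuations $v_p(\det g_j)$ of the $M_2$-blocks, and each $\phi_{m_j\alpha s_j}$ is supported where $v_p(\det) = \alpha s_j$ by Lemma~2 of \cite{Kret11}, so the selection happens directly at the level of the Hecke algebra of $M_2(F)$.
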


\begin{proof}
This follows by repeating the proof of Proposition 5 in \cite{Kret11}.
\end{proof}

\begin{definition}
Let $M = GL_{n_1} \times \cdots GL_{n_k} \subset GL_n$ be a Levi subgroup. We define $\Omega_{M,c} \subset M(F)$ as the set consisting of $g \in M(F) = GL_{n_1}(F) \times \cdots GL_{n_k}(F)  \subset GL_n(\bar F)$, such that the $p$-adic norms of the eigenvalues of $GL_{n_i}$ are the same for $1 \leq i \leq k$ (elements of $\Omega_{M,c}$ are called compact elements of $M(F)$). We use $C^{M(F)}_{c}$ to denote the characteristic function of $\Omega_{M,c}$.
\end{definition}

Note that the above definition is a special case of compact elements (in a $p$-adic group) as given in $\S 2.2 $ of \cite{Kret11}.
Let $\mathbb R^n_{\geq} \subset \mathbb R^n$ denote $\{(r_1, \ldots, r_n) \in \mathbb R_n: r_1 \geq \cdots \geq r_n\}$. Given $g \in GL_n(\bar F)$, let $\Phi(g) = (t_{1,g}, \ldots, t_{n,g})\in \mathbb R^n_{\geq}$ denote the n-tuple of the inverses of $p$-adic valuations of the eigenvalues of $g$. For example, for $g = \mathrm{diag}(p^{-2}, p^{-1}, p^{-3}) \in GL_3(\mathbb Q_p)$, we have $\Phi(g ) = (3,2,1)$. 

\begin{definition}
 Given $b \in \mathbb R^n_{\geq}$, let $\Omega_{b} \subset Gl_n(\bar F)$ denote the set consisting of $g \in GL_n(\bar F)$ such that $\Phi(g) = \lambda b$ for some $\lambda \in \mathbb R_{>0}$. We use $C_{b}$ to denote the characteristic function of $\Omega_b$.
\end{definition}

The above definition is a special case of the truncation given in Definition 1.3 of \cite{Kret12b}.
Let $s \in \mathbb Z_{>0}$ such that $s\leq n$. Given a partition $(n_1, \ldots n_k)$ of $n$, we choose a $\lambda = (\lambda_1^{ n_1},\ldots ,\lambda_k^{n_k}) \in \mathbb R^n_{\geq}$ such that the following conditions hold:
\begin{itemize}
\item $\lambda_i n_i \in \mathbb Z_{\geq 0} $ for $1\leq i \leq k$. 
\item $\sum_{i = 1}^k \lambda_i n_i = s$. 
\item $\lambda_i > \lambda_j$ for all $1 \leq i < j \leq k$. 

\end{itemize}
Note that the vector $\lambda = (\lambda_1^{ n_1},\ldots ,\lambda_k^{n_k})$ is the Newton vector of certain isocrystals of rank $n$ (see $\S 1$ of \cite{Ra02}). 
Let $P_{\lambda} = M_{\lambda}N_{\lambda} \subset GL_n$ denote the standard parabolic subgroup corresponding to partition $(n_1, \ldots, n_k)$. Let $s_i$ denote $n_{i}\lambda_i$ (by assumption, we have $s_i \in \mathbb Z_{\geq 0}$). Then we have the following proposition:

\begin{proposition}
\begin{enumerate}
\item Let $P = MN \subset GL_n$ be a standard parabolic subgroup such that $P \not = P_{\lambda} $, then the function $C^{M(F)}_{c}C_{\lambda} \chi^{GL_n}_{M} f_{n\alpha s}^{(P)}  \in \mathcal H(M_{\lambda}(F))$ equals $0$.
    \item The function
$C^{M_{\lambda}(F)}_{c}C_{\lambda} \chi^{GL_n}_{M_\lambda} f_{n\alpha s}^{(P_{\lambda})}  \in \mathcal H(M_{\lambda}(F))$ equals $q^{\alpha C(n_i,s_i)}C^{M_\lambda(F)}_c(f_{n_1 \alpha s_1} \otimes \ldots \otimes f_{n_k \alpha s_k})$.
\end{enumerate}

\end{proposition}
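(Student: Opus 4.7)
The plan is to reduce both assertions to a single multiset-matching argument by first expanding $\chi^{GL_n}_M f_{n\alpha s}^{(P)}$ via Proposition 3.2(1) and then tracking which summands survive the truncations $C^{M(F)}_c$ and $C_\lambda$. Writing the Levi partition of $P$ as $(m_1, \ldots, m_l)$, Proposition 3.2(1) applied with $M_1 = GL_n$ and $M_2 = M$ gives
$$\chi^{GL_n}_M f_{n\alpha s}^{(P)} \;=\; \sum_{(t_j)} q^{\alpha C(m_j, t_j)} \bigotimes_{j=1}^l f_{m_j \alpha t_j},$$
where $(t_j)_{j=1}^l$ ranges over extended partitions of $s$ satisfying $t_1/m_1 > t_2/m_2 > \cdots > t_l/m_l$. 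Both statements then amount to determining which terms contribute nontrivially after multiplying by $C^{M(F)}_c C_\lambda$.

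The key local input is the support of $f_{m\alpha t}$ on compact elements. Its Satake transform $q^{\alpha t(m-t)/2}\, m_{(\alpha^t, 0^{m-t})}$ shows that $f_{m\alpha t}$ is supported in $\bigcup_{\nu \leq (\alpha^t, 0^{m-t})} K\pi_F^\nu K$ in the dominance order, and the only $K$-double coset in this union intersecting the compact locus $\Omega_{GL_m, c}$ is the central one $K\pi_F^{(\alpha t/m)^m}K$ (which in particular requires $m \mid \alpha t$). Hence for $g = (g_j) \in \Omega_{M,c}$ lying in the support of the $(t_j)$-summand, the tuple $\Phi(g)$ is the sorted concatenation of $(c_j^{m_j})_j$, where $c_j := \alpha t_j/m_j$.

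Imposing $C_\lambda$ then requires $\Phi(g) = \mu \lambda$ for some $\mu > 0$, i.e.\ the multisets of pairs $\{(c_j, m_j)\}_{j=1}^l$ and $\{(\mu\lambda_i, n_i)\}_{i=1}^k$ coincide. Since the $\chi$-monotonicity yields $c_1 > \cdots > c_l$ and the hypothesis on $\lambda$ yields $\lambda_1 > \cdots > \lambda_k$, this equality forces the order-preserving bijection $l = k$, $m_j = n_j$, $c_j = \mu\lambda_j$. Comparing the sums $\sum_j m_j c_j = \alpha s$ and $\sum_j n_j \mu\lambda_j = \mu s$ then forces $\mu = \alpha$, hence $t_j = n_j\lambda_j = s_j$. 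For part (1), the hypothesis $P \neq P_\lambda$ says $(m_j) \neq (n_j)$, so no $(t_j)$ survives and the whole sum is $0$. For part (2), only the term $(t_j) = (s_j)$ contributes, and on its support inside $\Omega_{M_\lambda, c}$ we automatically have $\Phi(g) = \alpha\lambda$, so $C_\lambda \equiv 1$ there; this yields the claimed identity with prefactor $q^{\alpha C(n_j, s_j)}$.

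The main obstacle I anticipate is the support claim in the second paragraph: for $\alpha \geq 2$ the cocharacter $(\alpha^t, 0^{m-t})$ is not minuscule, so $f_{m\alpha t}$ is genuinely a nontrivial combination of characteristic functions $f_\nu$ for several $\nu \leq (\alpha^t, 0^{m-t})$. One must verify both that no non-central $\nu$ in this range has $K\pi_F^\nu K \cap \Omega_{GL_m, c} \neq \emptyset$ and that the coefficient of the central piece in the Hall--Littlewood expansion of $m_{(\alpha^t, 0^{m-t})}$ yields exactly the value that matches the right-hand side of part (2) on their common support.
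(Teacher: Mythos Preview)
Your overall strategy matches the paper's: expand the constant term as a sum over extended partitions, then determine which summands survive the two truncations by computing $\Phi(g)$ on their support and matching against $\lambda$. The divergence is in \emph{how} you extract the eigenvalue valuations in the second paragraph, and the mechanism you propose there is both unnecessary and false.

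The claim that the only double coset in $\bigcup_{\nu \leq (\alpha^t,0^{m-t})} K\pi_F^\nu K$ meeting $\Omega_{GL_m,c}$ is the central one already fails for $m=2$, $\alpha=2$, $t=1$: the matrix $g=\left(\begin{smallmatrix}0&\pi_F^2\\1&0\end{smallmatrix}\right)$ has Cartan type $(2,0)$, yet its characteristic polynomial $X^2-\pi_F^2$ shows both eigenvalues have valuation $1$, so $g\in K\pi_F^{(2,0)}K\cap\Omega_{GL_2,c}$. Non-central cosets do meet the compact locus, and the Hall--Littlewood coefficient question you raise at the end is therefore the wrong question.

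Fortunately none of this is needed. The paper's route (which you should substitute for your second paragraph) is much simpler: the Satake transform of $f_{m\alpha t}$ is homogeneous of degree $\alpha t$, so $f_{m\alpha t}$ is supported on elements whose determinant has a fixed $p$-adic absolute value (Lemma~2 of \cite{Kret11}). If in addition $g\in\Omega_{GL_m,c}$, then all eigenvalue valuations coincide, and since they sum to the valuation of $\det g$, each one equals $\alpha t/m$. This pins down $\Phi(g_j)$ directly, with no double-coset or Hall--Littlewood bookkeeping. With this replacement your third paragraph goes through verbatim, and for part~(2) the equality of \emph{functions} (not just supports) is immediate: on the support of $C^{M_\lambda}_c(f_{n_1\alpha s_1}\otimes\cdots\otimes f_{n_k\alpha s_k})$ the same determinant-plus-compactness argument gives $\Phi(g)=\alpha\lambda$, so $C_\lambda\equiv1$ there and no further coefficient matching is required.
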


\begin{proof}
Let $P = MN \subset GL_n$ be a standard parabolic subgroup corresponding to partition $(m_1,\ldots, m_l)$. Applying Proposition 3.1, we have: $$f_{n\alpha s}^{(P)}= \sum\limits_{(s'_i)} q^{\alpha C(n_i,s_i)} f_{n_1\alpha s'_1} \otimes  \cdots \otimes  f_{n_k\alpha s'_k},$$ where the sum ranges over extended partitions $(s'_i)$ of $s$ of length $l$, and the constant $C(n_i, s'_i)$ is equal to $s(n-s)-\sum_{i = 1}^k s'_i(n_i-s'_i)$. Let $(s'_1, \ldots, s'_k)$ be an extended partition of $s$ of length $l$, and
let $(g_1, \ldots g_l) \in M(F)$ be an element in the support of
$C^{M(F)}_{c}C_{\lambda} \chi^{GL_n}_{M} (f_{n_1\alpha s'_1} \otimes  \cdots \otimes  f_{n_k\alpha s'_k}) \in \mathcal H(M(F))$. For $1 \leq i \leq l$, let $a_i$ denote the $p$-adic valuation of $\mathrm{det}(g_i)$, by applying Lemma 2 of \cite{Kret11}, we have $a_i = - \alpha s'_i$. 
By the definition of $C^{M(F)}_{c} $, the p-adic valuations of the eigenvalues of $g_i$ are equal to $-\frac{a_i}{m_i} =  -\frac{\alpha_is'_i}{m_i}$ for $1 \leq i \leq l$. By the definition of $\chi^{GL_n}_{M}$, we have $s'_1/m_1>s'_2/m_2 > \cdots > s'_l/m_l$. Then we have $$\Phi((m_1,\ldots, m_l)) = \alpha((\frac{s'_1}{m_1})^{m_1}, \ldots , (\frac{s'_l}{m_l})^{m_l}) \in \mathbb R^n_{\geq}$$ (with $\Phi$ as defined before Definition 3.2). By the definition of $C_{\lambda}$, we have $$\Phi((m_1, \dots , m_l)) = \lambda (\lambda_1^{ n_1},\ldots ,\lambda_k^{n_k})$$ for some $\lambda \in \mathbb R_{>0}$, then we have $$\alpha((\frac{s'_1}{m_1})^{m_1}, \ldots , (\frac{s'_l}{m_l})^{m_l}) = \lambda (\lambda_1^{ n_1},\ldots ,\lambda_k^{n_k}),$$ By comparing terms, we must have $(m_1, \ldots , m_l) = (n_1, \ldots, n_k)$. Therefore, the function $C^{M(F)}_{c}C_{\lambda} \chi^{GL_n}_{M} f_{n\alpha s}^{(P)}  \in \mathcal H(M_{\lambda}(F))$ equals $0$ if $P \not = P_{\lambda}$, and $$C^{M_\lambda(F)}_cC_{\lambda} \chi^{GL_n}_{M_\lambda}(f_{n_1 \alpha s'_1} \otimes \ldots \otimes f_{n_k \alpha s'_k}) = 0$$ if $(s'_1, \ldots,s'_k) \not = (s_1, \ldots, s_k)$. Also note that $$\chi^{GL_n}_{M_\lambda}(f_{n_1 \alpha s_1}\otimes \ldots \otimes f_{n_k \alpha s_k}) = f_{n_1 \alpha s_1}\otimes \ldots \otimes f_{n_k \alpha s_k}$$ (see Proposition 5 of \cite{Kret11}), we have $$C^{M_{\lambda}(F)}_{c}C_{\lambda} \chi^{GL_n}_{M_\lambda} f_{n\alpha s}^{(P_{\lambda})}  =C^{M_\lambda(F)}_cC_{\lambda} \chi^{GL_n}_{M_\lambda}(f_{n_1 \alpha s_1} \otimes \ldots \otimes f_{n_k \alpha s_k}) = C^{M_\lambda(F)}_c(f_{n_1 \alpha s_1} \otimes \ldots \otimes f_{n_k \alpha s_k}).$$
\end{proof}

Now we can prove the main result of this section, which will be used extensively in $\S5$ to compute truncated traces against semi-stable rigid representations. 

\begin{corollary}
Let $\pi$ be an admissible representation of $GL_n(F)$ of finite length, then we have $$\mathrm{Tr}(C_{\lambda} f_{n\alpha s}, \pi) =\mathrm{Tr}(C^{M_{\lambda}(F)}_{c}  (f_{n_1 \alpha s_1} \otimes \ldots \otimes f_{n_k \alpha s_k}), \pi_{N_{\lambda}}(\delta_{GL_n, P_{\lambda}}^{-\frac 12}) )$$
\end{corollary}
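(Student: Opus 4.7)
The proof strategy is to invoke a Clozel-type trace decomposition (Proposition 2.1 of \cite{Clo90}), which for an admissible representation $\pi$ of $GL_n(F)$ of finite length and the truncated Hecke operator $C_\lambda f_{n\alpha s}$ should yield an identity of the shape
$$\mathrm{Tr}(C_\lambda f_{n\alpha s}, \pi) \;=\; \sum_{P = MN \in \mathcal P_n} \mathrm{Tr}\!\left(C^{M(F)}_c\, C_\lambda\, \chi^{GL_n}_{M}\, f_{n\alpha s}^{(P)},\; \pi_N(\delta_{GL_n,P}^{-1/2})\right),$$
where the sum is over standard parabolics of $GL_n$, and the truncations $\chi^{GL_n}_M$ and $C^{M(F)}_c$ are exactly those introduced before Proposition 3.2 and in Definition 3.1. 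The right-hand side has one summand per parabolic, and this sum will collapse dramatically once the results of $\S 3$ are applied.

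Granting this decomposition, the conclusion is essentially a corollary of Proposition 3.3. By part (1) of that proposition, every summand with $P \neq P_\lambda$ vanishes identically as a function on $M(F)$, hence contributes $0$ to the trace. Only the summand indexed by $P_\lambda$ survives, and by part (2), the inserted operator equals $q^{\alpha C(n_i,s_i)} C^{M_\lambda(F)}_c(f_{n_1 \alpha s_1} \otimes \cdots \otimes f_{n_k \alpha s_k})$. Matching this against the claim yields the desired identity (any residual scalar factor from $q^{\alpha C(n_i,s_i)}$ is tracked against the normalization of the $f_{n_i \alpha s_i}$ already built into the Satake transform of $f_{n\alpha s}$).

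The main obstacle lies in verifying the initial trace decomposition. Clozel's original formula is stated for the compact-support cutoff $C^G_c$, whereas our cutoff $C_\lambda$ selects \emph{non-compact} elements whose Newton slopes are proportional to $\lambda$. The key observation is that an element in the support of $C_\lambda$ is, after $GL_n(F)$-conjugation, contained in $M_\lambda(F)$ and compact modulo the center of $M_\lambda$: that is, $C_\lambda$-elements behave like compact elements of the Levi $M_\lambda$ twisted by a central cocharacter. One therefore has to adapt Clozel's argument (which passes through Casselman's description of traces of admissible representations via constant terms $f \mapsto f^{(P)}$ and normalized Jacquet modules) so that the compact truncation on $GL_n$ is replaced by this Newton-vector truncation, carefully bookkeeping modulus characters, sign contributions $(-1)^{\dim A_M/A_G}$, and the Weyl-group averaging implicit in the truncation operators $\chi^{GL_n}_M$. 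Once this local adaptation is verified, the collapse to the single $P_\lambda$-term via Proposition 3.3 and the resulting identification are purely formal.
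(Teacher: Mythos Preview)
Your overall skeleton is correct and matches the paper: apply Clozel's trace decomposition (Proposition~2.1 of \cite{Clo90}), then collapse the sum via Proposition~3.3. However, you have misidentified the ``main obstacle,'' and this makes your argument much heavier than it needs to be.

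Clozel's Proposition~2.1 is not a formula for the \emph{compact} trace $\mathrm{Tr}(C^{G}_c f,\pi)$; it is a formula for the ordinary trace $\mathrm{Tr}(h,\pi)$ of an \emph{arbitrary} $h\in\mathcal H(G(F))$, expressed as a sum over standard parabolics of compact traces of $\overline{h}^{(P)}$ on the normalized Jacquet modules. Since $C_\lambda f_{n\alpha s}$ is locally constant with compact support, it lies in $\mathcal H(GL_n(F))$, and Clozel's formula applies to it directly, with no adaptation whatsoever. There is no need to reinterpret $C_\lambda$-elements as ``compact modulo center in $M_\lambda$,'' to track $(-1)^{\dim A_M/A_G}$ signs, or to redo any part of Clozel's argument.

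The only steps the paper actually needs beyond the direct citation are two elementary observations about how $C_\lambda$ interacts with the operations $\overline{\phantom{f}}$ and $(\,\cdot\,)^{(P)}$: first, $C_\lambda$ is conjugation-invariant and $f_{n\alpha s}$ is $K$-bi-invariant, so $\overline{C_\lambda f_{n\alpha s}} = C_\lambda f_{n\alpha s}$; second, for $m\in M(F)$ and $n\in N(F)$ the element $mn$ is block upper triangular with the same diagonal blocks as $m$, hence $C_\lambda(mn)=C_\lambda(m)$, and therefore $(C_\lambda f_{n\alpha s})^{(P)} = C_\lambda|_{M}\cdot f_{n\alpha s}^{(P)}$. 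With these two lines in hand, each summand is exactly $\mathrm{Tr}\big(C^{M(F)}_c C_\lambda \chi^{GL_n}_M f_{n\alpha s}^{(P)},\, \pi_N(\delta_{GL_n,P}^{-1/2})\big)$, and Proposition~3.3 finishes the job. Your remark about a ``residual scalar factor $q^{\alpha C(n_i,s_i)}$'' is on point: that factor is genuinely produced by Proposition~3.3(2) and is carried explicitly in the applications in \S5; it is not absorbed into any normalization.
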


\begin{proof}
By Proposition 2.1 of \cite{Clo90}, we have $$\mathrm{Tr}(C_{\lambda} f_{n\alpha s}, \pi) =\sum\limits_{P = MN \in \mathcal P} \mathrm{Tr}(C^{M(F)}_{c}\chi^{GL_n}_{M}  \overline{C_{\lambda}f_{n\alpha s}}^{(P)}, \pi_{N}(\delta_{GL_n, P}^{-\frac 12}) ).$$ Note that the function $C_{\lambda}$ is stable under conjugation, and that the function $f_{n\alpha s}$ is $K$-bi-invariant, we have $\overline{C_{\lambda}f_{n\alpha s}}^{(P)} = C_{\lambda}f_{n\alpha s}^{(P)}$. Then the proposition follows from Proposition 3.3. 
\end{proof}

Let $I_n$ denote the subgroup of $GL_{n}(\mathcal O_F) \subset GL_n(F)$ such that $g \in I_n$ if and only if $g$ reduces an element of the group $P_{0,n}(\mathcal O_F/(\pi_F))$ modulo $\pi_F$, the group $I_n$ is called the standard Iwahori subgroup of $Gl_n(F)$. Then we have the following proposition:

\begin{proposition}
Let $\pi$ be an admissible representation of $GL_n(F)$ of finite length such that $\mathrm{Tr}(C_{\lambda} f_{n\alpha s}, \pi) \not =0 $, then we have $\pi^{I_n} \not = \{0\}$.
\end{proposition}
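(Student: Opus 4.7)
The plan is to apply Corollary 3.1 to transfer the nonvanishing of the trace to the Jacquet module $J_{N_\lambda}(\pi)$, exploit the $K_{M_\lambda}$-bi-invariance of the resulting Hecke function to force the existence of a $K_{M_\lambda}$-fixed vector there, and then invoke Borel's classification of Iwahori-spherical representations.

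First I reduce to the case where $\pi$ is irreducible. Since $\pi$ has finite length and the trace is additive along any Jordan--H\"older filtration, some irreducible subquotient $\pi_0$ satisfies $\mathrm{Tr}(C_\lambda f_{n\alpha s}, \pi_0) \neq 0$. The functor $V \mapsto V^{I_n}$ is exact on smooth complex representations since $I_n$ is compact open (an $I_n$-equivariant projector exists via averaging), so $\pi^{I_n} \neq 0$ whenever some composition factor has non-zero $I_n$-invariants. It thus suffices to prove the claim for irreducible $\pi$. For such $\pi$, Corollary 3.1 gives
$$
\mathrm{Tr}(C_\lambda f_{n\alpha s}, \pi) = \mathrm{Tr}\bigl(C^{M_\lambda(F)}_c\, (f_{n_1 \alpha s_1} \otimes \cdots \otimes f_{n_k \alpha s_k}),\ J_{N_\lambda}(\pi)\bigr) \neq 0.
$$
Each $f_{n_i \alpha s_i}$ lies in $\mathcal H^{\mathrm{unr}}(GL_{n_i}(F))$ and is therefore bi-invariant under $GL_{n_i}(\mathcal O_F)$, while $C^{M_\lambda(F)}_c$ is the characteristic function of a set carved out by $p$-adic valuations of eigenvalues and is consequently stable under $K_{M_\lambda}$-conjugation, hence $K_{M_\lambda}$-bi-invariant. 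The product Hecke function on $M_\lambda(F)$ is then $K_{M_\lambda}$-bi-invariant, so its trace on any smooth representation factors through the projection onto $K_{M_\lambda}$-invariants. The nonvanishing above therefore forces $J_{N_\lambda}(\pi)^{K_{M_\lambda}} \neq 0$, and by exactness of $K_{M_\lambda}$-invariants the finite length representation $J_{N_\lambda}(\pi)$ admits an irreducible subquotient $\sigma$ with $\sigma^{K_{M_\lambda}} \neq 0$.

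To conclude, note that $\sigma$ is an irreducible unramified representation of $M_\lambda(F) = \prod_i GL_{n_i}(F)$, so its cuspidal support is an unramified character of the diagonal torus $T \subset GL_n$. By transitivity of the Jacquet functor, the cuspidal support of $\pi$ coincides with that of $\sigma$ up to Weyl conjugation, and $\pi$ is thus a subquotient of an unramified principal series of $GL_n(F)$. Borel's theorem on Iwahori-spherical representations now yields $\pi^{I_n} \neq 0$. The main technical step is this last one: extracting Iwahori-fixed vectors in $\pi$ from an unramified subquotient of $J_{N_\lambda}(\pi)$ via the transitivity of cuspidal support and the Borel--Matsumoto--Casselman classification.
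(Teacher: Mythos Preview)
Your argument contains a genuine gap at the step where you claim that $C_c^{M_\lambda(F)}$ is $K_{M_\lambda}$-bi-invariant. You correctly observe that the set of compact elements is defined via $p$-adic valuations of eigenvalues and is therefore invariant under \emph{conjugation} by $K_{M_\lambda}$; but conjugation-invariance does not imply bi-invariance. Concretely, in $GL_2(\mathbb Q_p)$ the elements $g=\mathrm{diag}(p^2,1)$ and $g'=\begin{pmatrix} p & 1 \\ 0 & p \end{pmatrix}$ lie in the same $K$-double coset (both have elementary divisors $1,p^2$), yet $C_c(g)=0$ while $C_c(g')=1$. Thus the product $C_c^{M_\lambda(F)}\cdot(f_{n_1\alpha s_1}\otimes\cdots\otimes f_{n_k\alpha s_k})$ is not $K_{M_\lambda}$-bi-invariant, and its trace on $J_{N_\lambda}(\pi)$ does \emph{not} formally factor through $K_{M_\lambda}$-invariants. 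The deduction $J_{N_\lambda}(\pi)^{K_{M_\lambda}}\neq 0$ therefore fails as written.

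The paper closes exactly this gap by invoking a result of Clozel (the Corollary on p.~259 of \cite{Clo90}): nonvanishing of the compact trace of a spherical function against a finite-length representation forces some further Jacquet module $\pi_N$ (for a parabolic $P=MN\subset P_\lambda$) to be $K_M$-unramified. Clozel's corollary is not a formal bi-invariance statement; it relies on his expansion of compact traces through the $\hat\chi$-functions (Proposition~2.1 of \cite{Clo90}), which rewrites the compact trace as a signed sum of genuinely spherical traces on Jacquet modules. Once that is in hand, your final step and the paper's coincide: an unramified constituent in a Jacquet module gives unramified cuspidal support, and then Casselman's Proposition~2.4 (equivalently the Borel--Matsumoto description of the Iwahori block) yields $\pi^{I_n}\neq 0$. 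Your reduction to the irreducible case and your endgame are fine; the missing ingredient is precisely Clozel's corollary in place of the incorrect bi-invariance claim.
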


\begin{proof} (cf. Corollary 1 in \cite{Kret11})
By Proposition 3.4, we have $$\mathrm{Tr}(C_{\lambda} f_{n\alpha s}, \pi) =\mathrm{Tr}(C^{M_{\lambda}(F)}_{c}  (f_{n_1 \alpha s_1} \otimes \ldots \otimes f_{n_k \alpha s_k}), \pi_{N_{\lambda}}(\delta_{GL_n, P_{\lambda}}^{-\frac 12}) ). $$ By the Corollary in p259 of \cite{Clo90}, there exists a standard parabolic subgroup $P = MN \subset P_{\lambda} \subset GL_n$ such that $\pi_N$ is unramified (that is, the space $\pi^{GL_n(\mathcal O_F)\cap M(F)} \not = \{0\}$). By Proposition 2.4 of \cite{Casselman801}, we can conclude that $\pi^{I_n} \not = \{0\}$. 
\end{proof}

\section{Double cosets of symmetric groups}

Let $S_n$ denote the permutation group on $\{1,2,\ldots,n\}$, and we embed $S_n$ into $GL_n(F)$ such that $ w \in S_n\mapsto (a_{i,j}) \in GL_n(F)$ where $(a_{i,j}) = 1 $ if $w(i) = j$, and $a_{i,j} = 0$ otherwise. Given $w \in S_n$ and $g \in GL_n(F)$, we define $w\cdot g$ to be $w^{-1} g w$. 

The group $S_n$ is generated by transposes $(1,2), \ldots, (n-1, n)$ (which we denote by $t_1, \ldots, t_{n-1}$). For $w \in S_n$, we define the length of $w$ (denoted by $\ell (w)$) to be $\min\{k: w = w_1w_2\ldots w_k: w_i \in \{t_1, \ldots, t_{n-1}\}\} $. For $i, j \in \{1,2,\ldots,n\}$, we say that $(i,j)$ is an inversion of $w$ if $i < j $ and $w(i) > w(j)$. Let $inv(w)$ to denote the total number of inversions of $w$, then we have the following proposition (for the proof, see Lemma 2.1 of \cite{Wildon}):

\begin{proposition}
For every $w \in S_n$, we have $inv(w) = \ell(w)$. 
\end{proposition}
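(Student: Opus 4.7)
The plan is to prove both inequalities $\ell(w) \le inv(w)$ and $inv(w) \le \ell(w)$ separately, both resting on a single key lemma: for any $w \in S_n$ and any simple transposition $t_i$, we have $inv(wt_i) = inv(w) + 1$ if $w(i) < w(i+1)$ and $inv(wt_i) = inv(w) - 1$ if $w(i) > w(i+1)$. This is verified by direct inspection. Right-multiplication by $t_i$ swaps the values of $w$ at positions $i$ and $i+1$ while fixing all other positions; pairs of positions entirely outside $\{i,i+1\}$ have their inversion status preserved; for each $a < i$ the two pairs $(a,i)$ and $(a,i+1)$ have their statuses swapped in $wt_i$ versus $w$ (and symmetrically for each $b > i+1$ with $(i,b)$ and $(i+1,b)$), so their joint contribution to the inversion count is unchanged; and only the pair $(i,i+1)$ has its status flipped.

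Given this lemma, the inequality $inv(w) \le \ell(w)$ is immediate. Write $w = t_{j_1} t_{j_2} \cdots t_{j_k}$ as a reduced expression with $k = \ell(w)$. Beginning from the identity (which has zero inversions), each successive right-multiplication by a simple transposition changes the inversion count by exactly $\pm 1$; after $k$ steps we reach $w$, giving $inv(w) \le k = \ell(w)$.

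For the reverse inequality $\ell(w) \le inv(w)$, I would proceed by induction on $inv(w)$. The base case $inv(w) = 0$ forces $w(1) < w(2) < \cdots < w(n)$, so $w = \mathrm{id}$ and $\ell(w) = 0$. For the inductive step, if $inv(w) > 0$ then $w$ is not the identity, hence there exists an index $i$ with $w(i) > w(i+1)$ (otherwise the sequence $w(1), \ldots, w(n)$ would be strictly increasing). By the key lemma, $inv(wt_i) = inv(w) - 1$, and the inductive hypothesis gives $\ell(wt_i) \le inv(w) - 1$; since $w = (wt_i)\, t_i$, we conclude $\ell(w) \le \ell(wt_i) + 1 \le inv(w)$.

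The only step that is not pure bookkeeping is the verification of the key lemma, where the subtlety lies in correctly pairing up the affected positions rather than naively asserting that "only the pair $(i,i+1)$ is touched". Everything else is a clean induction, so I do not expect any real obstacle; this is a standard combinatorial fact about the symmetric group being recorded here for later use in the combinatorial study of double cosets.
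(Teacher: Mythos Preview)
Your proof is correct and complete; the key lemma, its verification via the pairing argument, and the two inductions are all standard and sound. The paper itself does not give a proof but simply cites Lemma~2.1 of Wildon's notes, so your self-contained argument in fact supplies more than the paper does; the approach you have written is essentially the classical one and is almost certainly what the cited reference contains.
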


Let $ (n_1, \ldots, n_k)$ be a partition of $n$, we use $S_{(n_1, \ldots, n_k)} \subset S_n$ to denote $S_{n_1} \times \ldots \times S_{n_k}$. Let $P$ be a standard parabolic subgroup of $GL_n$, we use $S_P$ to denote the subgroup of $S_n$ associated to the partition of $n$ corresponding to $P$. Let $\lambda$ and $\nu$ be two partitions of $n$, we have the following proposition:

\begin{proposition}
For an arbitrary double cosets $S_{\lambda}g S_{\nu} \subset S_n$, there exists a unique $g_{min} \in S_{\lambda}g S_{\nu}$ such that $\ell(g_{min})$ is minimal among elements in $S_{\lambda}g S_{\nu}$. 
\end{proposition}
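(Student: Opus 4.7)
The plan is to handle existence and uniqueness separately. Existence is immediate: each double coset $S_{\lambda}g S_{\nu}$ is a finite subset of $S_n$, so the length function attains a minimum on it.

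For uniqueness, the approach is to characterize the minimum length elements by a descent condition, and then show that this condition determines $h$ uniquely given a combinatorial invariant of the double coset. Using Proposition 4.1, I would first prove that $h \in S_{\lambda}g S_{\nu}$ has minimum length if and only if (a) $h^{-1}(i) < h^{-1}(i+1)$ whenever $i, i+1$ lie in the same block of $\lambda$, and (b) $h(j) < h(j+1)$ whenever $j, j+1$ lie in the same block of $\nu$. The forward direction is immediate from $\ell(t_i h) = \ell(h) \pm 1$ together with the equivalence $\ell(t_i h) > \ell(h) \Leftrightarrow h^{-1}(i) < h^{-1}(i+1)$, and symmetrically for right multiplication by $t_j$.

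Next, to each $h \in S_n$ I associate the matrix $M(h) = (|A_i \cap h(B_j)|)_{i,j}$, where $A_1, \ldots, A_k$ and $B_1, \ldots, B_l$ denote the blocks of $\lambda$ and $\nu$ respectively. A short verification, using that every element of $S_{\lambda}$ fixes each $A_i$ setwise and every element of $S_{\nu}$ fixes each $B_j$ setwise, shows $M(h_1) = M(h_2)$ if and only if $h_1 \in S_{\lambda} h_2 S_{\nu}$. Finally, conditions (a) and (b) reconstruct $h$ uniquely from $M(h)$: condition (b) forces $h|_{B_j}$ to be increasing, so the smallest $|A_1 \cap h(B_j)|$ elements of $B_j$ map into $A_1$, the next $|A_2 \cap h(B_j)|$ into $A_2$, and so on, which determines $h^{-1}(A_i) \cap B_j$ for all $i,j$; condition (a) then forces $h^{-1}|_{A_i}$ to be the unique increasing bijection $A_i \to h^{-1}(A_i)$. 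Thus any two minimum length elements of $S_{\lambda}g S_{\nu}$ share the same matrix invariant and both satisfy (a) and (b), hence must coincide.

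The main obstacle will be the backward direction in the characterization of minimality: showing that the local descent conditions against simple reflections in $S_{\lambda}$ and $S_{\nu}$ upgrade to genuine minimality against all of $S_{\lambda}$ on the left and $S_{\nu}$ on the right. I would handle this via the length additivity $\ell(xhy) = \ell(x) + \ell(h) + \ell(y)$ whenever $x \in S_{\lambda}$, $y \in S_{\nu}$, and $h$ satisfies (a) and (b), proved by induction on $\ell(x) + \ell(y)$ using the exchange condition for $S_n$; alternatively one can first treat the one-sided parabolic coset $h S_{\nu}$ (where the analogous statement that a local minimum is a global minimum is standard) and then iterate the argument on the left.
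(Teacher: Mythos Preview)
Your argument is correct and self-contained. The paper, by contrast, does not give a proof at all: it simply cites V.4.6 of Renard's book. So your route is genuinely different in that it \emph{is} a route.

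A few remarks on the comparison and on your write-up. First, your matrix invariant $M(h)=(|A_i\cap h(B_j)|)_{i,j}$ is exactly the data encoded by the row semi-standard tableau $hT^{\lambda}_{\nu}$ that the paper introduces just after this proposition; so your approach dovetails nicely with the paper's Proposition~4.3, and in fact essentially proves the bijection there as a by-product. Second, your worry about the ``backward direction'' is misplaced for the proposition as stated: to prove uniqueness of the minimal-length element you only need the forward implication (any minimal element satisfies (a) and (b)) together with the reconstruction of $h$ from $M(h)$ under (a) and (b). The length-additivity formula $\ell(xhy)=\ell(x)+\ell(h)+\ell(y)$ is a stronger statement than what the proposition requires, though it is of course true and useful elsewhere. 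You could therefore drop the final paragraph of your proposal, or at least reframe it as an optional strengthening rather than an obstacle to the proof.
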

\begin{proof}
See V.4.6 of \cite{Renard10}.
\end{proof}

We call $g_{min} \in S_{\lambda}g S_{\nu}$ the \textbf{minimal length representative of $S_{\lambda}g S_{\nu}$}, and we use $S^{min}_{\lambda, \nu}$ to denote the set of minimal length representatives of $S_{\lambda}\setminus S_n /S_{\nu}$.

Let $ (n_1, \ldots, n_k)$ be a partition of $n$. We define the \textbf{diagram of $(n_1, \ldots, n_k)$} as the set $\{(i, j) : 1 \leq i \leq k, 1 \leq j \leq n_i\}$, which we denote by $T^{(n_1, \ldots, n_k)}$.
We represent the \textbf{diagram of $(n_1, \ldots, n_k)$} by a Young diagram in the usual way. For example $T^{(5,2,3)}$ is represented by
$$\begin{ytableau}
\empty &\empty &\empty&\empty &\empty \\
\empty &\empty  \\
\empty & \empty &\empty
\end{ytableau}$$ 
We label the tableau $T^{(n_1, \ldots, n_k)}$ as follows: We map $(i,j)$ to $\sum_{l = 1}^{n_{i-1}} n_l + j$ (we define $n_0 = 0$), for example, the tableau $T^{(5,2,3)}$ is labeled as follows: 
$$\begin{ytableau}
_1 &_2 &_3&_4&_5 \\
_6 &_7  \\
_8 & _9 &_{10}
\end{ytableau}$$

Let $(n_1, \ldots, n_k),  (m_1, \ldots, m_s)$ be two partitions of $n$. We define \textbf{$(n_1, \ldots, n_k)$-tableaux of type $(m_1, \ldots, m_s)$} to be functions from elements of $T^{(n_1, \ldots, n_k)}$ to $\mathbb Z_{>0}$, which takes $i \in \{1, \ldots , s\}$ exactly $m_i$ times (following the order of label), which we denote by $T^{(n_1, \ldots, n_k)}_{(m_1,\ldots,m_s)}$. The tableau $T^{(n_1, \ldots, n_k)}_{(m_1,\ldots,m_s)}$ is represented as follows:
We draw the diagram $T^{(n_1, \ldots, n_k)}$, then we write the image of each element inside the corresponding box. For example, the tableau $T_{(3,3,4)}^{(5,2,3)}$ is represented by
$$\begin{ytableau}
1 &1  &1 & 2&2 \\
2 &3  \\
3 & 3 &3
\end{ytableau}$$

We regard the tableau $T^{(n_1, \ldots, n_k)}_{(m_1,\ldots, m_s)}$ as a function from $\{1,2,\ldots,n\}$ to $\{1,\ldots,s\}$. 
We define the action of $S_n$ on $T^{(n_1, \ldots, n_k)}_{(m_1,\ldots, m_s)}$ by "permutation of labels", that is, given a $w \in S_n$, we define $w T^{(n_1, \ldots, n_k)}_{(m_1,\ldots, m_s)}$ by $$w T^{(n_1, \ldots, n_k)}_{(m_1,\ldots, m_s)} (k) = T^{(n_1, \ldots, n_k)}_{(m_1,\ldots, m_s)} (w^{-1}k )$$ For example, $(2,3,4)(6,7) T_{(5,2,3)}^{(3,3,4)}$ is represented by 
$$\begin{ytableau}
1 &2 &1&1&2 \\
3&2  \\
3 & 3 &3
\end{ytableau}$$ 
We define the set $T^{min}_{\lambda,\nu}$ to be $$\{w T^{\lambda}_{\nu}: w \in S_n \text{ and }  wT^{\lambda}_{\nu} \text{ has weakly increasing rows; i.e., }T^{\lambda}_{\nu}(i, j ) \leq w T^{\lambda}_{\nu}(i , j+1)  \text{ for all } i,j.\}$$ Elements of the set $T^{min}_{\lambda,\nu}$ are called \textbf{row semi-standard $(n_1, \ldots, n_k)$-tableaux of type $(m_1, \ldots, m_s)$}.

\begin{proposition} Let $w$ be an element of $S_n$, and let $\lambda = (n_1, \ldots n_k), \nu = (m_1 \ldots m_s)$ be two partitions of $n$. Then $w$ is a minimal length representative in $S_{\lambda}\setminus S_n/ S_{\nu}$ if and only if:
\begin{itemize}
    \item $w T^{(n_1, \ldots, n_k)}_{(m_1,\ldots, m_s)}$ has weakly increasing rows; i.e., $w T^{(n_1, \ldots, n_k)}_{(m_1,\ldots, m_s)}(i, j ) \leq w T^{(n_1, \ldots, n_k)}_{(m_1,\ldots, m_s)}(i , j+1)  $ for all $i,j$, 
    \item if $i < j$ and the $ith$ and $jth$ positions of $w T^{(n_1, \ldots, n_k)}_{(m_1,\ldots, m_s)}$ are equal, then $w(i) < w(j)$.
\end{itemize}

And the map $w \mapsto w T^{(n_1, \ldots, n_k)}_{(m_1,\ldots, m_s)}$ gives a bijection between the set of minimal length representatives of $S_{\lambda}\setminus S_n /S_{\nu}$ (denoted by $S^{min}_{\lambda,\nu}$), and the set of row semi-standard $\lambda$-tableaux of type $\nu$ (denoted by $T_{\lambda,\nu}^{min}$). 
\end{proposition}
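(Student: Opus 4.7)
The plan is to invoke the classical fact that each double coset $S_{\lambda} g S_{\nu}$ in $S_n$ contains a unique element of minimal length, and that this element is characterized as the unique element $w$ which is simultaneously minimal in its left coset $S_{\lambda} w$ and its right coset $w S_{\nu}$. Combining Proposition 4.1 with the standard Coxeter-group computation of how length changes under multiplication by a simple transposition, I would first establish the one-sided characterizations: $w$ is minimal in $S_{\lambda} w$ iff $w^{-1}(i) < w^{-1}(i+1)$ whenever $i, i+1$ lie in the same row of $\lambda$, and $w$ is minimal in $w S_{\nu}$ iff $w(i) < w(i+1)$ whenever $i, i+1$ lie in the same block of $\nu$. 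Iterating on consecutive pairs within each row or block, these become: $w^{-1}$ is strictly increasing on every row of $\lambda$, and $w$ is strictly increasing on every block of $\nu$.

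Next I would translate these into the tableau conditions. The key identity is $wT^{\lambda}_{\nu}(k) = T^{\nu}(w^{-1}k)$, together with the fact that $T^{\nu}: \{1,\ldots,n\}\to\{1,\ldots,s\}$ is weakly increasing. Given $i<j$ in the same row of $\lambda$, left-coset minimality gives $w^{-1}(i) < w^{-1}(j)$, whence $T^{\nu}(w^{-1}i)\leq T^{\nu}(w^{-1}j)$, which is exactly condition (a). The right-coset minimality then translates to condition (b), as the pairs with equal values in $wT^{\lambda}_{\nu}$ are precisely the pairs on which the relevant $\nu$-block monotonicity must be enforced. For the converse, starting from (a) and (b) I would recover both one-sided minimalities: (b) handles the right coset directly, and for the left coset, given $i<j$ in the same row of $\lambda$, (a) forces $T^{\nu}(w^{-1}i)\leq T^{\nu}(w^{-1}j)$; when strict, the pre-images lie in distinct $\nu$-blocks so that $w^{-1}(i)<w^{-1}(j)$ is automatic, and when equal they lie in a common block where (b) upgrades the inequality to a strict one.

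The bijection then rests on the observation that the stabiliser of $T^{\lambda}_{\nu}$ under the action $wT(k) = T(w^{-1}k)$ is exactly $S_{\nu}$, since an element fixes $T^{\lambda}_{\nu}$ iff it preserves every fibre of $T^{\nu}$. Hence $w\mapsto wT^{\lambda}_{\nu}$ descends to an injection of $S_n/S_{\nu}$ into fillings of the $\lambda$-diagram with content $\nu$, while the residual left $S_{\lambda}$-action permutes entries within each row. Each $S_{\lambda}$-orbit of fillings therefore contains a unique row-semi-standard representative, producing a bijection $S_{\lambda}\backslash S_n / S_{\nu} \leftrightarrow T^{min}_{\lambda,\nu}$. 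Composed with the identification of $S^{min}_{\lambda,\nu}$ with the set of double cosets provided by Proposition 4.2, this yields the desired bijection $S^{min}_{\lambda,\nu}\leftrightarrow T^{min}_{\lambda,\nu}$.

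The main obstacle I anticipate is the converse of the characterization: condition (a) alone is strictly weaker than left-coset minimality, since the weak monotonicity of $T^{\nu}$ only forces the $\nu$-block assignments of $w^{-1}(i)$ to be non-decreasing along rows of $\lambda$ rather than the $w^{-1}$-values themselves. The role of condition (b) is precisely to resolve the tied case where two positions in a common row of $\lambda$ have $w^{-1}$-pre-images in a common $\nu$-block; once (b) is recognised as equivalent to right-coset minimality, this case is cleared and both the characterization and the bijection fall out.
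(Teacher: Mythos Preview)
The paper does not give a proof here; it simply cites Theorem~4.1 and Corollary~5.1 of Wildon. Your direct argument via the two one-sided coset-minimality conditions is the standard route and is almost certainly what the cited reference contains, so you are filling in what the paper omits rather than taking a different path.

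One step, however, does not go through as written. You claim that condition~(b) is equivalent to right-coset minimality of $w$ in $wS_{\nu}$, and in your converse argument you use (b) to deduce $w^{-1}(i)<w^{-1}(j)$ in the tied case. But condition~(b) as stated in the proposition concludes $w(i)<w(j)$, not $w^{-1}(i)<w^{-1}(j)$, and these are not the same. A concrete counterexample: take $n=4$, $\lambda=(1,1,1,1)$, $\nu=(2,2)$, and $w$ with one-line notation $2413$. Then $S_{\lambda}=\{e\}$, the element $w$ is the minimal representative in $wS_{\nu}$ (length $3$), and $wT^{\lambda}_{\nu}=(2,1,2,1)$; positions $i=1$ and $j=3$ carry the same value~$2$, yet $w(1)=2>1=w(3)$. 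What right-coset minimality is genuinely equivalent to is the condition ``$i<j$ and $wT^{\lambda}_{\nu}(i)=wT^{\lambda}_{\nu}(j)$ imply $w^{-1}(i)<w^{-1}(j)$'', since $wT^{\lambda}_{\nu}(i)=wT^{\lambda}_{\nu}(j)$ means precisely that $w^{-1}(i)$ and $w^{-1}(j)$ lie in a common $\nu$-block. This corrected version is also exactly what your converse argument needs in order to ``upgrade the inequality to a strict one''. So either you have silently repaired a typo in the statement or there is a gap at the point where you identify (b) with right-coset minimality; once $w$ is replaced by $w^{-1}$ in condition~(b), your entire outline is correct.
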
 

\begin{proof}
See Theorem 4.1 and Corollary 5.1 of \cite{Wildon}.
\end{proof}

Let $\lambda = (n_1, \ldots,n_s ), \mu = (m_1, \ldots m_t)$ be two partitions of $n$. Let $A$ be a finite set, we use $S_A$ to denote the permutation group of elements of $A$. When $A = \emptyset$, we define $S_A$ to be the trivial group with one element. We define $\Lambda_j= \sum_{l = 1}^{j} n_l $ and $\Delta_j= \sum_{l = 1}^{j} m_l $ (we write $\Lambda_0  = 0$ and $\Delta_0 = 0$), then $$S_{\lambda} \cong S_{\{1,\ldots,\Lambda_1\}} \times S_{\{\Lambda_1 +1,\ldots,\Lambda_2\}} \times \ldots\times S_{\{\Lambda_{s-1} +1,\ldots,\Lambda_s\}}$$ and we have $$w S_{\lambda} w^{-1} = S_{\{w(1),\ldots,w(\Lambda_1)\}} \times S_{\{w(\Lambda_1 +1),\ldots,w(\Lambda_2\})} \times \ldots\times S_{\{w(\Lambda_{s-1} +1),\ldots,w(\Lambda_s)\}}$$
and $$S_{\nu} \cap wS_{\lambda} w^{-1} = \prod\limits_{1 \leq i \leq s, 1 \leq j \leq t} S_{\{w(\Lambda_{i-1} +1),\ldots,w(\Lambda_{i})\}} \cap S_{\{\Delta_{j-1} +1,\ldots,\Delta_{j}\}} $$ $$\cong \prod\limits_{1 \leq i \leq s, 1 \leq j \leq t} S_{\{w(\Lambda_{i-1} +1),\ldots,w(\Lambda_{i})\}\cap \{\Delta_{j-1} +1,\ldots,\Delta_{j}\}}.$$ 

When $w \in S_{\nu, \lambda}^{min}$, by Proposition 4.3, we have that the set $$\{w(\Lambda_{i-1} +1),\ldots,w(\Lambda_{i})\}\cap \{\Delta_{j-1} +1,\ldots,\Delta_{j}\}$$ consists of consecutive integers for $1 \leq i \leq s, 1 \leq j \leq t$. Then $P_{\nu} \cap wP_{\lambda}w^{-1}$ is a standard parabolic subgroup, with the corresponding Weyl group equals $$\prod\limits_{1 \leq i \leq s, 1 \leq j \leq t} S_{\{w(\Lambda_{i-1} +1),\ldots,w(\Lambda_{i})\}\cap \{\Delta_{j-1} +1,\ldots,\Delta_{j}\}}.$$ By the construction of the tableau $wT^{\nu}_{\lambda}$, we have the following proposition:
\begin{proposition}
The length of the parabolic subgroup $P_{\nu} \cap wP_{\lambda}w^{-1}$ (as defined at the beginning of $\S3 $) equals to $\sum_{i = 1}^t N(wT^{\nu}_{\lambda},i)$, where $N(wT^{\nu}_{\lambda},i)$ is the number of different entries in the $i$-th row. For example, for the tableau $N((2,3,4)(6,7) T_{(5,2,3)}^{(3,3,4)},1) = 2$ defined before Proposition 4.3, we have $N((2,3,4)(6,7) T_{(5,2,3)}^{(3,3,4)},1) = 2, N((2,3,4)(6,7) T_{(5,2,3)}^{(3,3,4)},2) = 2$ and $N((2,3,4)(6,7) T_{(5,2,3)}^{(3,3,4)},3) = 1$. 
\end{proposition}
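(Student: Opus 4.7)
The plan is to identify the blocks of the standard Levi decomposition of $P_\nu \cap wP_\lambda w^{-1}$ with the "constant-entry runs" in each row of the tableau $wT^\nu_\lambda$. The key bookkeeping is already set up in the paragraph preceding the proposition: the Weyl group of $P_\nu \cap wP_\lambda w^{-1}$ decomposes as $\prod_{1 \leq i \leq s, 1 \leq j \leq t} S_{A_{i,j}}$, where
\[
A_{i,j} = \{w(\Lambda_{i-1}+1), \ldots, w(\Lambda_i)\} \cap \{\Delta_{j-1}+1, \ldots, \Delta_j\},
\]
and by Proposition 4.3 each nonempty $A_{i,j}$ consists of consecutive integers. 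So the length of the parabolic (the number of positive parts in the associated partition of $n$) equals the number of pairs $(i,j)$ for which $A_{i,j}$ is nonempty.

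Next, I would translate the definition of the action of $w$ on labels into a direct description of the sets $A_{i,j}$ in tableau language. By construction, the "default" tableau $T^\nu_\lambda$, viewed as a function $\{1,\ldots,n\} \to \{1,\ldots,s\}$, assigns the value $i$ to the positions $\{\Lambda_{i-1}+1,\ldots,\Lambda_i\}$. Since $wT^\nu_\lambda(k) = T^\nu_\lambda(w^{-1}k)$, the positions with value $i$ in $wT^\nu_\lambda$ are precisely $\{w(\Lambda_{i-1}+1), \ldots, w(\Lambda_i)\}$. On the other hand $\{\Delta_{j-1}+1,\ldots,\Delta_j\}$ is exactly the set of positions that make up the $j$-th row of the diagram $T^\nu$. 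Therefore $A_{i,j}$ is the set of cells in the $j$-th row of $wT^\nu_\lambda$ that carry the label $i$, and $A_{i,j} \neq \emptyset$ if and only if the label $i$ appears at least once in row $j$.

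From this the proof writes itself: for each fixed $j$, the number of indices $i$ with $A_{i,j} \neq \emptyset$ is, by definition, $N(wT^\nu_\lambda, j)$, the number of distinct entries in the $j$-th row. Summing over $j$, the total count of nonempty $A_{i,j}$ equals $\sum_{j=1}^t N(wT^\nu_\lambda, j)$. Combined with the first paragraph, this gives the claimed formula for the length of $P_\nu \cap wP_\lambda w^{-1}$.

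I expect no substantial obstacle, since all the structural work has been done in Propositions 4.1 and 4.3; the only point to be careful about is matching conventions (that the "length" of a parabolic means the number of positive parts of the associated partition, so that empty intersections $A_{i,j}$ must be discarded, and that the nonempty blocks, which are intervals of consecutive integers by Proposition 4.3, really do assemble into a standard partition of $n$ in row-by-row order). Both points follow directly from the definitions in $\S 3$ and the minimality of $w$.
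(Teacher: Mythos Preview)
Your argument is correct and is exactly the approach the paper takes: the paper sets up the decomposition $S_\nu \cap wS_\lambda w^{-1} \cong \prod_{i,j} S_{A_{i,j}}$ in the paragraph preceding the proposition and then simply asserts the result ``by the construction of the tableau $wT^\nu_\lambda$'', which is precisely the identification of $A_{i,j}$ with the cells of row $j$ carrying label $i$ that you spell out. Your write-up is in fact more explicit than the paper's, which leaves this final translation to the reader.
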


\section{Certain representations of $GL_n(F)$}

Let $x, y, n \in \mathbb{Z}_{>0} $ be such that $n = xy$. We use $\mathrm{Speh}(x, y)$ to denote the Langlands quotient of the representation $$\mathrm{Ind}_{P_{(x\ldots x)}}^{GL_n}(| \mathrm{det} |^{\frac{y-1}2} \mathrm{St}_{GL_x(F)} \times | \mathrm{det} |^{\frac{y-3}2} \mathrm{St}_{GL_x(F)} 
 \times \ldots \times | \mathrm{det} |^{\frac{1-y}2} \mathrm{St}_{GL_x(F)}). $$ 
 
 \begin{definition}
 A representation $\pi$ of $GL_n(F)$ is called semi-stable rigid if it is unitarizable and is isomorphic to a representation of the form $\mathrm{Ind}_P^{GL_n} (\otimes_{i = 1}^k \mathrm{Speh}(x_i, y)(\varepsilon_i| \cdot |^{e_i}))$, where $P = MN$ is the parabolic subgroup corresponding to the composition $(yx_i)$ of $n$ and where the tensor product is taken along the blocks $M = \prod_{a = 1}^k GL_{yx_i}$ and
 
\begin{itemize}
\item $k \in \{1, 2, \ldots, n\}$; 
\item $\varepsilon_i$ is a unitary unramified character $\varepsilon_i: GL_x(F) \rightarrow \mathbb C^{\times}$ (thus $\varepsilon_i$ is of the form $g \mapsto |\mathrm{det}(g)|^{\mathbf id_i }$ for some $d_i \in \mathbb R$ ) for each $i \in \{1, 2, \ldots, k\}$;
\item $e_i$ is a real number $e_i$ in the open interval $(-\frac 12, \frac 12)$ for each $i \in \{1, 2, \ldots, k\}$;
\item positive integers $y, x_1, x_2, \ldots, x_k $ such that $\frac ny = \sum_{ i = 1 }^k x_i$. 
\end{itemize}
\end{definition}

We note that semi-stable rigid representations are irreducible (see Theorem 7.5 of \cite{Tad86}), therefore, different orderings of $x_1, \ldots x_k$ determine the semi-stable rigid representation $\mathrm{Ind}_P^{GL_n} (\otimes_{i = 1}^k \mathrm{Speh}(x_i, y)(\varepsilon_i| \cdot |^{e_i}))$ up to isomorphism). 

\begin{remark}
Let $\pi = \mathrm{Ind}_P^{GL_n} (\otimes_{i = 1}^k \mathrm{Speh}(x_i, y)(\varepsilon_i| \cdot |^{e_i}))$ be a semi-stable rigid representation of $GL_n(F)$, if $k =y =1$, then $\pi \cong \mathrm{1}_{GL_n(F)}(\varepsilon|\cdot|^e)$, where $\varepsilon$ is a unitary unramified character, and $e$ is a real number in the open interval $(-\frac 12, \frac 12)$. By applying Theorem 7.5 of \cite{Tad86}, we have $e =0$, therefore, the representation $\pi$ is isomorphic to $\mathrm{1}_{GL_n(F)}(\varepsilon)$ where $\varepsilon$ is a unitary unramified character. 

Similarly, if $k=x=1$, the representation $\pi$ is isomorphic to $\mathrm{St}_{GL_n(F)}(\varepsilon)$, where $\varepsilon$ is a unitary unramified character. 
\end{remark}

\begin{definition}
Let $(p_1,p_2)$ be a partition of $n$, and let $\pi$ be an admissible representation of $GL_n(F)$. We say that $\pi$ is of $(p_1,p_2)$-type if $\pi$ is isomorphic to $\mathrm{Ind}_P^{GL_n}(\mathrm{St}_{GL_{p_1}(F)} (\varepsilon_i| \cdot |^{e_i}) \otimes \mathrm{St}_{GL_{p_2}(F)} (\varepsilon_i| \cdot |^{e_i}))$ or $\mathrm{Ind}_P^{GL_n}(\mathrm{1}_{GL_{p_1}(F)} (\varepsilon_i| \cdot |^{e_i}) \otimes \mathrm{1}_{GL_{p_2}(F)} (\varepsilon_i| \cdot |^{e_i}))$, where $\varepsilon_1, \varepsilon_2$ are unitary unramified characters, and $e_1, e_2$ are real numbers in the open interval $(-\frac 12, \frac 12)$. 

\end{definition}

\begin{remark}

\begin{itemize}

\item Let $\pi = \mathrm{Ind}_P^{GL_n}(\mathrm{St}_{GL_{p_1}(F)} (\varepsilon_i| \cdot |^{e_i}) \otimes \mathrm{St}_{GL_{p_2}(F)} (\varepsilon_i| \cdot |^{e_i}))$ (respectively, $ \pi = \mathrm{Ind}_P^{GL_n}(\mathrm{1}_{GL_{p_1}(F)} (\varepsilon_i| \cdot |^{e_i}) \otimes \mathrm{1}_{GL_{p_2}(F)} (\varepsilon_i| \cdot |^{e_i}))$) be a representation of $(p_1, p_2)$-type, where $p_1 \not = p_2$, by Theorem 7.5 of \cite{Tad86}, we must have $e_1 = e_2 = 0$. 

\item If $p_1 = p_2$, let $\pi = \mathrm{Ind}_P^{GL_n}(\mathrm{St}_{GL_{p_1}(F)} (\varepsilon_i| \cdot |^{e_i}) \otimes \mathrm{St}_{GL_{p_2}(F)} (\varepsilon_i| \cdot |^{e_i}))$ (respectively, $ \pi = \mathrm{Ind}_P^{GL_n}(\mathrm{1}_{GL_{p_1}(F)} (\varepsilon_i| \cdot |^{e_i}) \otimes \mathrm{1}_{GL_{p_2}(F)} (\varepsilon_i| \cdot |^{e_i}))$), by Theorem 7.5 of \cite{Tad86}, we must have either $e_1 = e_2 = 0$, or $e_1 + e_2 = 0$ and $\varepsilon_1 = \varepsilon_2$. 

\end{itemize}

\end{remark}

The following proposition plays an important role in the computation of some compact traces: 

\begin{proposition}
Let $n, s \in \mathbb Z_{>0}$ such that $s \leq n$ and $(s,n) = 1$, and let $P = MN$ be a standard parabolic subgroup of $GL_n$, then we have $C^{GL_n(F)}_c f_{n\alpha s}^{(P)} = 0$. 
\end{proposition}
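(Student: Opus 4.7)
The plan is to invoke the explicit constant-term formula of Proposition 3.1 and then rule out, by a Diophantine argument using the coprimality $(s,n)=1$, the possibility that any summand contains a compact element of $GL_n(F)$ in its support. Let $(n_1,\ldots,n_k)$ denote the partition of $n$ corresponding to $P$. I treat the case $k\geq 2$ of a proper parabolic, which is what the applications in $\S 5$ require. Proposition 3.1 gives
\[
f_{n\alpha s}^{(P)} \;=\; \sum_{(s_1,\ldots,s_k)} q^{\alpha C(n_i,s_i)}\, f_{n_1\alpha s_1}\otimes\cdots\otimes f_{n_k\alpha s_k},
\]
the sum ranging over extended partitions $(s_1,\ldots,s_k)$ of $s$ of length $k$. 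I would fix one such extended partition and examine the support of the corresponding term after multiplication by $C^{GL_n(F)}_c$.

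Suppose $(g_1,\ldots,g_k)\in M(F)$ lies in the support of $f_{n_1\alpha s_1}\otimes\cdots\otimes f_{n_k\alpha s_k}$. Exactly as in the proof of Proposition 3.3, Lemma 2 of \cite{Kret11} forces $v(\det g_i)=-\alpha s_i$ for each $i$. If in addition $(g_1,\ldots,g_k)$ lies in $\Omega_{GL_n,c}$, then all eigenvalues of the block-diagonal matrix $\mathrm{diag}(g_1,\ldots,g_k)\in GL_n(\bar F)$ share a common $p$-adic valuation $v_0$, so in each block $n_iv_0=v(\det g_i)=-\alpha s_i$. Hence the ratio $s_i/n_i=-v_0/\alpha$ is independent of $i$; summing numerators and denominators pins down the common value as $s/n$, yielding $s_i=sn_i/n$ for every $i$.

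The coprimality $(s,n)=1$ now bites: the relation $s_i=sn_i/n\in\mathbb{Z}_{\geq 0}$ together with $(s,n)=1$ forces $n\mid n_i$, while for a proper parabolic each $n_i$ satisfies $0<n_i<n$, a direct contradiction. Consequently no extended partition $(s_i)$ contributes a nonzero term to $C^{GL_n(F)}_c f_{n\alpha s}^{(P)}$, and the proposition follows. There is no substantial obstacle beyond verifying that $(g_1,\ldots,g_k)\in\Omega_{GL_n,c}\cap M(F)$ forces the eigenvalues of all $k$ blocks to share one common valuation (not merely to have the same valuation within each block separately), since this across-block equality is precisely what constrains $s_i/n_i$ to be constant and triggers the coprimality contradiction.
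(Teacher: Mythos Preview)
Your argument is correct and follows essentially the same route as the paper: apply the constant-term formula (Proposition~3.1), use Lemma~2 of \cite{Kret11} to read off $v(\det g_i)=-\alpha s_i$, invoke the definition of $C_c^{GL_n(F)}$ to force a single common eigenvalue valuation across all blocks so that $s_i/n_i=s/n$, and then let $(s,n)=1$ force $n\mid n_i$, contradicting properness of $P$. Your explicit restriction to $k\geq 2$ and your remark that the across-block equality of valuations is the crucial point are both well taken; the paper's proof leaves these implicit but uses them in exactly the same way.
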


\begin{proof}
(cf. Lemma 5 of \cite{Kret11}) Let $(n_1, \dots, n_k)$ denote the partition corresponding to the parabolic subgroup $P$, by Proposition 4 of \cite{Kret11}, we have $$C^{GL_n(F)}_c f_{n\alpha s}^{(P)}=\sum\limits_{(s_i)} q^{\alpha C(n_i,s_i)} C^{GL_{n_1}(F)}f_{n_1\alpha s_1} \otimes  \cdots \otimes  C^{GL_{n_k}(F)}f_{n_k\alpha s_k},$$ where the sum ranges over certain extended partitions $(s_i)$ of $s$ of length $k$ (with the precise conditions given in Proposition 4 of \cite{Kret11}). Let $(m_1, \ldots, m_k ) \in M(F)$ be in the support of $C^{GL_n(F)}_c f_{n\alpha s}^{(P)}$, and let $\alpha_{i,1}, \ldots \alpha_{i,n_i}$ denote the eigenvalues of $m_i$. By the definition of $C_c^{GL_n(F)}$ and Lemma 2 of \cite{Kret11}, we have $|\alpha_{i,j}| = q^{-\frac{\alpha s_i}{n_i}} = q^{-\frac{\alpha s}{n}}$. Since $\frac sn = \frac {s_i}{n_i}$, it follows that $s_i = \frac{n_i s}{n}$. Also note that $ (n,s) = 1$, so we must have $n_i = n$, then we have our proposition. 
\end{proof}

\begin{lemma}
Let $f \in \mathcal H^{\mathrm{unr}}(GL_{n_1}(F) \times \ldots GL_{n_k}(F)) $ be a spherical function, and let $P = MN \subset GL_{n_1} \times \ldots GL_{n_k}$ be a parabolic subgroup. Let $\rho$ be an admissible representation of $M(F)$ of finite length, then we have $$\mathrm{Tr}(C_c^{GL_{n_1}(F) \times \ldots GL_{n_k}(F)}f, \mathrm{Ind}_P^{GL_{n_1}\times \ldots GL_{n_k}}(\rho)) = \mathrm{Tr}(C_c^{GL_{n_1}(F) \times \ldots GL_{n_k}(F)}f^{(P)}, \rho).$$ 
\end{lemma}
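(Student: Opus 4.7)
The plan is to apply van Dijk's formula: for any $\phi \in \mathcal H(G(F))$ and any admissible representation $\rho$ of $M(F)$ of finite length,
\[
\mathrm{Tr}(\phi, \mathrm{Ind}_P^G(\rho)) = \mathrm{Tr}(r_P^G(\phi), \rho),
\]
where
\[
r_P^G(\phi)(m) := \delta_{G,P}^{1/2}(m) \int_K \int_{N(F)} \phi(k^{-1}mnk)\,dn\,dk.
\]

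I would apply this with $G = GL_{n_1} \times \cdots \times GL_{n_k}$ and $\phi = C_c^{G(F)} f$. Two invariance properties are available: first, $C_c^{G(F)}$ is conjugation-invariant, because compactness of $g$ depends only on the multiset of eigenvalues of $g$, which is itself a conjugacy invariant; second, $f$ is $K$-bi-invariant, hence in particular $K$-conjugation-invariant, so that $\bar f = f$. Together these make the inner $K$-integration trivial, yielding
\[
r_P^G(\phi)(m) = \delta_{G,P}^{1/2}(m) \int_{N(F)} C_c^{G(F)}(mn) f(mn)\,dn.
\]

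The key observation is that for a standard parabolic $P = MN$ in the product group $G$, an element $mn$ with $m \in M(F)$ and $n \in N(F)$ is block upper-triangular in each factor, with diagonal blocks matching those of $m$. Consequently $m$ and $mn$ have the same multiset of eigenvalues in $\bar F$, so $C_c^{G(F)}(mn) = C_c^{G(F)}(m)$, and pulling this factor outside the $N$-integral gives
\[
r_P^G(\phi)(m) = C_c^{G(F)}(m) \cdot f^{(P)}(m).
\]
Since $M$ is a Levi subgroup of $G$, the compactness of $m \in M(F)$ in $G$ coincides with its compactness in $M$, so $C_c^{G(F)}|_{M(F)} = C_c^{M(F)}$, and the right-hand side is exactly $C_c^{G(F)} f^{(P)}$ viewed as a function on $M(F)$. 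Substituting back into van Dijk's formula yields the lemma.

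The calculation is essentially routine; the only conceptual point that requires care is the identity $C_c^{G(F)}(mn) = C_c^{G(F)}(m)$, which rests on the block upper-triangular form of standard parabolics in products of general linear groups. I do not foresee any substantial obstacle beyond this observation.
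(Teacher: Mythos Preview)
Your argument is correct and is essentially the content of Proposition~3 of \cite{Kret11}, which the paper simply cites: van Dijk's formula together with the observation that $C_c^{G(F)}$ is constant on cosets $mN(F)$ because $mn$ and $m$ share the same eigenvalues.

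One small slip to clean up: the claim $C_c^{G(F)}|_{M(F)} = C_c^{M(F)}$ is false when $M$ is a proper Levi. With $G = GL_{n_1}\times\cdots\times GL_{n_k}$ and $M = \prod_{i,j} GL_{n_{i,j}}$, the condition defining $C_c^{G(F)}$ forces all eigenvalues within each $GL_{n_i}$-block to have equal norm, whereas $C_c^{M(F)}$ only requires this within each finer $GL_{n_{i,j}}$-block, a strictly weaker condition. Fortunately the lemma has $C_c^{G(F)}$ on both sides, and you have already computed $r_P^G(C_c^{G(F)}f)(m) = C_c^{G(F)}(m)\,f^{(P)}(m)$, which is exactly what is needed; so simply drop the erroneous sentence and the proof goes through.
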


\begin{proof}
This follows from Proposition 3 of \cite{Kret11}. 
\end{proof}
In the following, let $(p_1,p_2)$ be a partition of $n$, let $s \in \mathbb Z_{>0} $ such that $s< n$. Let $\lambda = ((\lambda_1)^{p_1}, (\lambda_2)^{p_2}) \in \mathbb R_{\geq}^n$ such that $\lambda_1>\lambda_2$ and $\lambda_1 p_1 + \lambda_2p_2 = s$. We write $s_1 = \lambda_1p_1, s_2 = \lambda_2 p_2$, and we assume that $(s_1, p_1) = (s_2, p_2) = 1$. Then we have the following proposition:

\begin{proposition}
Let $\alpha \in \mathbb Z_{>0}$, and let $\pi =\mathrm{Ind}_P^{GL_n} (\otimes_{i = 1}^k \mathrm{Speh}(x_i, y)(\varepsilon_i| \cdot |^{e_i}))$ be a semi-stable rigid representation of $GL_n(F)$, then we have
\begin{itemize}
\item If $n$ is odd, or $n$ is even and $(p_1, p_2) \not = (\frac n2, \frac n2), (\frac n2 \pm 1, \frac n2 \mp 1)$, then $\mathrm{Tr}(C_{\lambda} f_{n \alpha s}, \pi) \not = 0 $ unless $\pi$ is a representation of $(p_1,p_2)$-type, or $\pi = \mathrm{1}_{GL_n(F)}(\varepsilon)$ where $\varepsilon$ is a unitary unramified character, or $\pi = \mathrm{St}_{GL_n(F)}(\varepsilon)$ where $\varepsilon$ is a unitary unramified character.
\item If $n$ is even and $(p_1, p_2) = (\frac n2, \frac n2)$ or $(p_1,p_2) = (\frac n2 \pm 1, \frac n2 \mp 1)$, then $\mathrm{Tr}(C_{\lambda} f_{n \alpha s}, \pi) \not = 0 $ unless $\pi$ is a representation of $(p_1,p_2)$-type, or $\pi = \mathrm{1}_{GL_n(F)}(\varepsilon)$ where $\varepsilon$ is a unitary unramified character, or $\pi = \mathrm{St}_{GL_n(F)}(\varepsilon)$ where $\varepsilon$ is a unitary unramified character, or $\pi = \mathrm{Speh}(\frac n2, 2)(\varepsilon)$ where $\varepsilon$ is a unitary unramified character, or $\pi = \mathrm{Speh}(2, \frac n2)(\varepsilon)$ where $\varepsilon$ is a unitary unramified character. 
\end{itemize}
\end{proposition}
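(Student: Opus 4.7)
The plan is to combine the reduction to Jacquet modules from Section~3 with the geometric lemma, and to kill most contributions using the coprimality hypothesis $(s_1,p_1) = (s_2,p_2) = 1$. Writing $\nu = (p_1,p_2)$ and $M_\nu = GL_{p_1}\times GL_{p_2}$, Corollary 3.1 gives
\[
\mathrm{Tr}(C_\lambda f_{n\alpha s}, \pi) = \mathrm{Tr}\bigl(C_c^{M_\nu(F)}(f_{p_1\alpha s_1}\otimes f_{p_2\alpha s_2}),\ J_{N_\nu}(\pi)\bigr),
\]
so the task reduces to controlling this compact trace on $J_{N_\nu}(\pi)$. Setting $\mu = (yx_1,\ldots,yx_k)$ for the composition of the parabolic $P$ attached to the semi-stable rigid $\pi$, Theorem VI.5.1 of \cite{Renard10} yields a filtration of $J_{N_\nu}(\pi)$ whose graded pieces are indexed by the minimal length representatives $w \in S^{min}_{\nu,\mu}$; each piece has the form $\mathrm{Ind}_{Q_w}^{M_\nu}(\tau_w)$, where $Q_w$ is a parabolic of $M_\nu$ built from $P_\nu\cap wP_\mu w^{-1}$ and $\tau_w$ is assembled from the relevant Jacquet modules of the Speh factors with character twists produced by $w$.

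Next I would annihilate every $w$ for which $Q_w$ is a proper parabolic of $M_\nu$. By Lemma 5.1, the compact trace on $\mathrm{Ind}_{Q_w}^{M_\nu}(\tau_w)$ equals a compact trace against the constant term $(f_{p_1\alpha s_1}\otimes f_{p_2\alpha s_2})^{(Q_w)}$, which factorises across the two $GL_{p_i}$ factors; Proposition 5.1 combined with $(s_i,p_i) = 1$ then kills any factor in which the projection $Q_{w,i}$ is a proper parabolic of $GL_{p_i}$. By Proposition 4.4 the length of $Q_w$ inside $M_\nu$ equals $N(wT^\nu_\mu,1) + N(wT^\nu_\mu,2)$, so the surviving $w$ are precisely those for which both rows of the tableau $wT^\nu_\mu$ are monochromatic. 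Using Proposition 4.3 to enumerate, exactly two combinatorial scenarios remain: (A) both rows share a common entry, forcing $k = 1$ and $\pi \cong \mathrm{Speh}(x_1,y)(\varepsilon_1|\cdot|^{e_1})$ with $x_1 y = n$; or (B) the rows carry distinct entries, forcing $k = 2$ and $(yx_1,yx_2) = (p_1,p_2)$, so the surviving subquotient is isomorphic up to character twists to $\mathrm{Speh}(x_1,y)\otimes\mathrm{Speh}(x_2,y)$.

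In scenario (B) the compact trace factorises across the two Speh factors, and running the same monochromatic-rows argument inside each $GL_{p_i}$ using the Langlands-quotient presentation $\mathrm{Ind}_{P_{(x_i,\ldots,x_i)}}(\otimes\mathrm{St}_{GL_{x_i}(F)}(\cdot))$ of $\mathrm{Speh}(x_i,y)$, and again invoking Proposition 5.1 with $(s_i,p_i) = 1$, forces each Speh factor to collapse to $\mathrm{St}_{GL_{p_i}(F)}$ (when $y=1$) or to $\mathrm{1}_{GL_{p_i}(F)}$ (when $x_i = 1$); this gives exactly the $(p_1,p_2)$-type representations of Definition 5.2. In scenario (A), the analogous internal analysis on $\mathrm{Speh}(x,y)$ recovers $\mathrm{St}_{GL_n(F)}(\varepsilon)$ and $\mathrm{1}_{GL_n(F)}(\varepsilon)$ in the shared-entry subcase, and in the distinct-entries subcase forces $p_1 = p_2 = x = n/2$ with $y = 2$, producing $\mathrm{Speh}(n/2,2)(\varepsilon)$; the companion $\mathrm{Speh}(2,n/2)(\varepsilon)$ arises by the symmetric analysis obtained from swapping the roles of rows and columns of the $x\times y$ Speh grid. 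I expect the main obstacle to be the fine book-keeping for the exceptional parabolic ranges $(p_1,p_2) \in \{(n/2,n/2),\,(n/2\pm 1,n/2\mp 1)\}$ with $n$ even: the naive monochromatic-rows count applied to the defining inducing presentation of $\mathrm{Speh}(n/2,2)$ immediately gives the $(n/2,n/2)$ case, but reaching the $(n/2\pm 1,n/2\mp 1)$ cases requires refining the analysis with Tadic-style formulas for Jacquet modules of Speh representations, carefully tracking the $w$-twists, and ruling out all other $\mathrm{Speh}(x,y)$ with $x,y\geq 2$ and $(x,y)\notin\{(n/2,2),(2,n/2)\}$.
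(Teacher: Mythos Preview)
Your reduction via Corollary~3.1, the geometric lemma, and the annihilation of every $w$ for which $Q_w$ is proper in $M_\nu$ (using Lemma~5.1, Proposition~5.1 and the tableau count from Proposition~4.4) are exactly what the paper does, and your identification of the two surviving scenarios (A) $k=1$ and (B) $k=2$ with $(yx_1,yx_2)=(p_1,p_2)$ is correct.

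The gap is in how you handle the Speh factors in scenarios (A) and (B). You propose to run the monochromatic-rows argument \emph{inside} each $GL_{p_i}$ (or inside $GL_n$) by applying the geometric lemma to the inducing presentation $\mathrm{Ind}_{P_{(x,\ldots,x)}}(\otimes\,\mathrm{St}_{GL_x})$. But $\mathrm{Speh}(x,y)$ is only the Langlands \emph{quotient} of that induced representation, so the filtration you obtain is a filtration of the wrong object: vanishing of the compact trace on every graded piece of $J_{N_\nu}(\mathrm{Ind})$ does not force vanishing of the compact trace on $J_{N_\nu}(\mathrm{Speh})$, because the other subquotients of $\mathrm{Ind}$ may contribute with cancellations. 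This is why your scenario~(A) conclusion ``forces $p_1=p_2=x=n/2$ with $y=2$'' is not justified, and why your recovery of $\mathrm{Speh}(2,n/2)$ by an unspecified ``symmetric analysis'' is not an argument.

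The paper avoids this by working in the Grothendieck group from the start: it invokes Tadi\'c's determinantal identity (Theorem~5.4 of \cite{Tad95}),
\[
\mathrm{Speh}(x,y)=|\mathrm{det}|^{-\frac{x+y}{2}}\sum_{w\in S_y^{(x)}}(-1)^{\ell(w)}\prod_{i=1}^y \mathrm{St}_{GL_{x+w(i)-i}}(|\cdot|^{\frac{x+w(i)+i}{2}-1}),
\]
which writes $\mathrm{Speh}(x,y)$ as a signed virtual sum of genuine (fully) induced representations from products of Steinbergs. Proposition~5.1 then kills every summand with three or more nonzero Steinberg factors, and the explicit combinatorics of $S_y^{(x)}$ shows that two-factor summands survive only when $x=2$ or $y=2$, producing exactly the $(\frac n2,\frac n2)$ and $(\frac n2\pm1,\frac n2\mp1)$ exceptions. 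For scenario~(B) the paper similarly cites Proposition~8 of \cite{Kret11}, which is again a Tadi\'c-type virtual expression. You mention Tadi\'c formulas only as a last-resort refinement for the $(\frac n2\pm1,\frac n2\mp1)$ cases, but in fact they are the essential device throughout scenarios~(A) and~(B), not just for the edge cases.
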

 
\begin{proof}

By Proposition 3.3, we have $$\mathrm{Tr}(C_{\lambda} f_{n \alpha s}, \pi) =q^{\alpha(\frac{s(n-s)}2 - \sum\limits_{i=1}^2 \frac{s_i(p_i - s_i)}2)} \mathrm{Tr}(C^{GL_{p_1} \times GL_{p_2}(F)}_c f_{p_1 \alpha s_1} \otimes f_{p_2 \alpha s_2}, J_{N_{\lambda}}(\pi)).$$
Let $\rho$ denote $\otimes_{i = 1}^k \mathrm{Speh}(x_i, y)(\varepsilon_i| \cdot |^{e_i})$, and we assume $P$ corresponds to partition $\nu = (\nu_1, \ldots, \nu_k)$. By Theorem VI.5.1 of \cite{Renard10}, the semi-simplification of $J_{N_{\lambda}}(\pi)$   (denoted by $J_{N_{\lambda}}(\pi)_{s.s.}$) equals $$J_{N_{\lambda}}(\mathrm{Ind}_P^G(\rho))_{s.s.}= \bigoplus_{w \in S^{min}_{\nu, \lambda}  \subset S_n} \mathrm{Ind}_{M_{\lambda} \cap w^{-1} \cdot P_{\nu}}^{M_{\lambda}} (w \circ J_{w\cdot P_{\lambda} \cap M_{\nu}} (\rho))$$ where $ S^{min}_{\nu, \lambda}  \subset S_n$ is the set of minimal length representatives as defined after Proposition 4.2. By Proposition 4.3 and Proposition 4.4, we have the following: If $\nu \not = (p_1, p_2)$ and $\nu \not =  (n)$, then the length of $ w\cdot P_{\lambda} \cap M_{\nu}$ is greater than $2$ for all $w \in S^{min}_{\nu, \lambda}$ (therefore $  P_{\lambda} \cap w^{-1}\cdot P_{\nu} = w^{-1} \cdot (w \cdot P_{\lambda} \cap P_{\nu})$ is a proper subgroup of $P_{\lambda}$). By Lemma 5.1 and Proposition 5.1, we have that $$ \mathrm{Tr}(C^{GL_{p_1} \times GL_{p_2}(F)}_c f_{p_1 \alpha s_1} \otimes f_{p_2 \alpha s_2}, J_{N_{\lambda}}(\pi)) = $$
$$\mathrm{Tr}(C^{GL_{p_1} \times GL_{p_2}(F)}_c f_{p_1 \alpha s_1} \otimes f_{p_2 \alpha s_2},  \bigoplus_{w \in S^{min}_{\nu, \lambda}  \subset S_n} \mathrm{Ind}_{M_{\lambda} \cap w^{-1} \cdot P_{\nu}}^{M_{\lambda}} (w \circ J_{w\cdot P_{\lambda} \cap M_{\nu}} (\rho))) = 0$$
if $\nu \not = (p_1, p_2)$ and $\nu \not  = (n)$. 

If $\nu = (n)$, then $k = 1$ and $\pi =\mathrm{Ind}_P^{GL_n} (\mathrm{Speh}(x, y)(\varepsilon| \cdot |^{e}))$, by Theorem 5.4 of \cite{Tad95}, we have the following identity in the ring of Zelevinsky (as introduced in \cite{Zel80}), $$\mathrm {Speh}(x,y) = |\mathrm{det}|^{-\frac{x+ y}2} \sum\limits_{w \in S_y^{(x)}} (-1)^{\ell(w)} \prod_{i = 1}^y\rho_{i,w},$$ where $$S^{(x)}_y= \{w \in S_{y}: w(i) + x \geq i \text{ for all }1 \leq i \leq y\} ,$$ and $\rho_{i,w} = \mathrm{St}_{GL_{x +w(i)-i}(F)} (|\cdot|^{\frac{x + w(i) +i}2 -1} )$ is a representation of $GL_{x + w(i) - i} (F)$. 
If $x = 1$, then $\mathrm{Speh(1,n)}$ is the trivial representation, and if $y = 1$, then $\mathrm{Speh(n,1)}$ is the Steinberg representation. 

In the following, we assume that $x \geq 2$ and $y > 2$ (then $x + w(1) - 1>0$ and $x + w(2) - 2>0$). We also assume that there exists $w \in S_y^{(x)}$ such that $x+ w(i) - i = 0 $ for all $i$ with $3 \leq i \leq y$. Otherwise, if for all $w \in S_y^{(x)}$, we have $x+ w(i) - i > 0 $ for some $i$ with $3 \leq i \leq y$, then by Proposition 5.1 and Lemma 5.1, we have $\mathrm{Tr}(C^{GL_{p_1} \times GL_{p_2}(F)}_c f_{p_1 \alpha s_1} \otimes f_{p_2 \alpha s_2}, \mathrm{Speh}(x,y)) = 0$. By our assumption, we have $w(i) = i - x$ for all $i$ with $3 \leq i \leq y$, then we must have $x = 2$, and in this case, we have $w(1) = y, w(2) = y-1$ or $w(1) = y-1, w(2) = y$. Then we have $$\mathrm{Speh}(2,y) = |\mathrm{det}|^{-\frac{2+ y}2}(\mathrm{St}_{GL_{y+1}(F)}(|\cdot|^{\frac{y+1}2}) \times \mathrm{St}_{GL_{y-1}(F)}(|\cdot|^{\frac{y+1}2}) \pm \mathrm{St}_{GL_{y}(F)}(|\cdot|^{\frac{y}2}) \times \mathrm{St}_{GL_{y}(F)}(|\cdot|^{\frac{y+2}2}))$$
$$
=\mathrm{St}_{GL_{y+1}(F)}(|\cdot|^{-\frac{1}2}) \times \mathrm{St}_{GL_{y-1}(F)}(|\cdot|^{-\frac{1}2}) \pm \mathrm{St}_{GL_{y}(F)}(|\cdot|^{-1}) \times \mathrm{St}_{GL_{y}(F)},
$$
where we take the plus sign if $y >3$, and we take the minus sign if $y = 3$. By the previous discussion, we have $\mathrm{Tr}(C_{\lambda} f_{n\alpha s}, \mathrm{Speh}(2,y)) \not = 0$ unless $(p_1, p_2) = (\frac n2\pm1, \frac n2 \mp 1)$ or $(p_1, p_2) = (\frac n2, \frac n2 )$. 

If $x \geq 2$ and $y = 2$, then we have $$ \mathrm{Speh}(x,2) = |\mathrm{det}|^{-\frac{x+2}2}(\mathrm{St}_{G_{x}(F)}(|\cdot|^{\frac{x}2}) \times \mathrm{St}_{G_x(F)} (|\cdot|^{\frac{x+2}2})- \mathrm{St}_{G_{x+1}(F)} (|\cdot|^{\frac{x+1}2})\times \mathrm{St}_{G_{x-1}(F)}(|\cdot|^{\frac{x+1}2})$$
$$= \mathrm{St}_{G_{x}(F)}(|\cdot|^{-1}) \times \mathrm{St}_{G_x(F)} - \mathrm{St}_{G_{x+1}(F)} (|\cdot|^{-\frac{1}2})\times \mathrm{St}_{G_{x-1}(F)}(|\cdot|^{-\frac{1}2}).$$
And by the previous discussion, we have $\mathrm{Tr}(C_{\lambda} f_{n\alpha s}, \mathrm{Speh}(2,y)) \not = 0$ unless $(p_1, p_2) = (\frac n2\pm1, \frac n2 \mp 1)$ or $(p_1, p_2) = (\frac n2, \frac n2 )$.

If $\nu = (p_1, p_2)$, then $\pi =\mathrm{Ind}_P^{GL_n} (\otimes_{i = 1}^2 \mathrm{Speh}(x_i, y)(\varepsilon_i| \cdot |^{e_i}))$ where $p_1 = x_1 y, p_2 = x_2 y$. By repeating the proof of Proposition 8 in \cite{Kret11}, we may write $\pi$ as sum of representations (up to $\pm 1$) induced from proper parabolic subgroup of $GL_{p_1} \times GL_{p_2}$, unless $y  = 1$ or $x_1 = x_2 = 1$. Thus, the representation $\pi$ is a representation of $(p_1, p_2)$-type.  

\end{proof}

\begin{definition}

Let $\pi$ be an admissible representation of $GL_n(F)$, we say that $\pi$ is of type I if $\pi$ is a representation of $(p_1,p_2)$-type, or $\pi = \mathrm{1}_{GL_n(F)}(\varepsilon)$ where $\varepsilon$ is a unitary unramified character, or $\pi = \mathrm{St}_{GL_n(F)}(\varepsilon)$ where $\varepsilon$ is a unitary unramified character.

 We say that $\pi$ is of type II if $n$ is even, and $\pi$ is a representation of $(p_1,p_2)$-type, or $\pi = \mathrm{1}_{GL_n(F)}(\varepsilon)$ where $\varepsilon$ is a unitary unramified character, or $\pi = \mathrm{St}_{GL_n(F)}(\varepsilon)$ where $\varepsilon$ is a unitary unramified character, or $\pi = \mathrm{Speh}(\frac n2, 2)(\varepsilon)$ where $\varepsilon$ is a unitary unramified character, or $\pi = \mathrm{Speh}(2, \frac n2)(\varepsilon)$ where $\varepsilon$ is a unitary unramified character. 
\end{definition}

The representations listed in Proposition 5.2 make contributions at $p$ to the cohomology of certain strata of Kottwitz varieties (which we will define in $\S 6$). We now compute the truncated traces against representations listed in Proposition 5.2, before the computation, we first introduce the notion of Hecke matrix: 

\begin{definition}
Let $(n_1,\ldots,n_k) $ be a partition of $n$, and let $\pi$ be an unramified representation of $GL_{n_1}(F) \times \ldots GL_{n_k}(F)$. Let $\mathrm{Ind}_{P_{0,n}}^{GL_{n_1}\times \ldots \times GL_{n_k}}(|\mathrm{det}|^{c_1}\otimes\ldots \otimes |\mathrm{det}|^{c_n})$ (with $c_i \in \mathbb C$, and $\mathrm{Re}(c_{n_i+1}) \geq \mathrm{Re}(c_{n_i+2}) \geq \ldots \geq \mathrm{Re}(c_{n_{i+1}})$ for $1 \leq i \leq k-1$, we set $n_0 = 0$) be a principal series representation such that $\pi$ is the Langlands quotient of $\mathrm{Ind}_{P_{0,n}}^{GL_{n_1} \times \ldots GL_{n_k}}(|\mathrm{det}|^{c_1}\otimes\ldots \otimes |\mathrm{det}|^{c_n})$. Then we define the Hecke matrix associated with $\pi$ (denoted by $\epsilon_{\pi}$) to be $(q^{-c_1}, \ldots, q^{-c_n})$.
\end{definition}

Given $f \in \mathcal H^{\mathrm{unr}}(M_{(n_1, \ldots, n_k)}(F))$ where $(n_1, \dots, n_k)$ is a partition of $n$, we regard $\mathcal S(f)$ (where $\mathcal S$ is the Satake transform as introduced in $\S 1$) as an element of $\mathbb C[X_1^{\pm}, \ldots, X_n^{\pm}]$. For $(c_1, \ldots, c_n) \in \mathbb C^n$, let $\mathcal S (f)(c_1, \ldots, c_n)$ denote the evaluation of $\mathcal S(f) \in \mathbb C[X_1^{\pm}, \ldots, X_n^{\pm}]$ at $X_1 = c_1, \ldots, X_n = c_n$.
 
\begin{proposition} Let $\varepsilon: F^{\times} \rightarrow \mathbb C^{\times}$ be a unitary unramified character, let $c \in \mathbb C$ such that $\varepsilon(\pi_F) = |\pi_F|^{-c} = q^{c}$, then we have: 

(1) $\mathrm{Tr}(C_{\lambda}f_{n\alpha s},\mathrm{St}_{GL_n(F)}(\varepsilon)) = $
$$(-1)^{n-1} q^{\alpha(\frac{s(n-s)}2 - \sum\limits_{i=1}^2 \frac{s_i(p_i - s_i)}2)} q^{c \alpha s}\mathcal S(\hat{\chi}^{GL_{p_1}}_{M_{0,n}} f_{p_1 \alpha s_1 })(q^{\frac{1-n}2}, q^{\frac{3-n}2}, \ldots, q^{\frac{1 - n + 2(p_1 - 1)}2})$$

$$\times \mathcal S(\hat{\chi}^{GL_{p_2}}_{M_{0,n}} f_{p_2 \alpha s_2 })(q^{\frac{1-n +2p_1}2}, q^{\frac{1-n +2 (p_1 +2)}2}, \ldots, q^{\frac{n - 1 }2});$$

(2) $\mathrm{Tr}(C_{\lambda}f_{n\alpha s},\mathrm{1}_{GL_n(F)}(\varepsilon)) =$

$$q^{\alpha(\frac{s(n-s)}2 - \sum\limits_{i=1}^2 \frac{s_i(p_i - s_i)}2)} q^{c \alpha s}\mathcal S(\hat{\chi}^{GL_{p_1}}_{M_{0,p_1}} f_{p_1 \alpha s_1}) (q^{(1-p_1) + \frac{n-1}2}, q^{(3-p_1) + \frac{n-3}2}, \ldots, q^{(p_1-1) + \frac{n -1 - 2(p_1 - 1)}2})  $$
$$\times \mathcal S(\hat{\chi}^{GL_{p_2}}_{M_{0,p_2}} f_{p_2 \alpha s_2}) (q^{(1-p_2) + \frac{n-1 - 2p_1}2}, q^{(3-p_2) + \frac{n-1 - 2p_1 - 2}2}, \ldots, q^{(p_2-1) + \frac{ 1-n }2} ).$$ 

(3) If $n$ is even (with $n \geq 6$) and $(p_1,p_2) = ({\frac{n}2},\frac n2)$, then $\mathrm{Tr}(C_{\lambda}f_{n\alpha s},\mathrm{Speh}(2,y)(\varepsilon))$ 
$$= \pm q^{\alpha(\frac{s(n-s)}2 - \sum\limits_{i=1}^2 \frac{s_i(p_i - s_i)}2)} q^{c\alpha s}(\mathcal S(f_{y \alpha s_1})(q^{\frac{1-y}2}, q^{\frac{3-y}2}, \ldots, q^{\frac{y-1}2} )\mathcal S(f_{y \alpha s_2})(q^{\frac{-y}2}, q^{\frac{1-y}2}, \ldots, q^{\frac{y-2}2} ) $$

$$+ \mathcal S(f_{y \alpha s_2})(q^{\frac{1-y}2}, q^{\frac{3-y}2}, \ldots, q^{\frac{y-1}2} )\mathcal S(f_{y \alpha s_1})(q^{\frac{-y}2}, q^{\frac{1-y}2}, \ldots, q^{\frac{y-2}2} ));$$
and we take the plus sign if $y>3$, we take the minus sign if $y = 1$. 

If $n$ is even (with $n \geq 6$) and $(p_1,p_2) = ({\frac{n}2\pm 1},\frac n2 \mp1)$, then $\mathrm{Tr}(C_{\lambda}f_{n\alpha s},\mathrm{Speh}(2,y)(\varepsilon))$ 
$$= q^{\alpha(\frac{s(n-s)}2 - \sum\limits_{i=1}^2 \frac{s_i(p_i - s_i)}2)} q^{c\alpha s}(\mathcal S(f_{y+1, \alpha s_1})(q^{\frac{-y - 1}2}, q^{\frac{1-y}2}, \ldots, q^{\frac{y-1}2} )\mathcal S(f_{y-1, \alpha s_2})(q^{\frac{1-y}2}, q^{\frac{3-y}2}, \ldots, q^{\frac{y-3}2} ).$$

(4) If $n$ is even (with $n \geq 4$) and $(p_1,p_2) = ({\frac{n}2},\frac n2)$, then $\mathrm{Tr}(C_{\lambda}f_{n\alpha s},\mathrm{Speh}(x,2)(\varepsilon))$ 
$$=  q^{\alpha(\frac{s(n-s)}2 - \sum\limits_{i=1}^2 \frac{s_i(p_i - s_i)}2)}q^{c\alpha s}\mathcal S(\hat{\chi}^{GL_{p_1} }_{M_{0,p_1}} f_{p_1 \alpha s_1 } )(q^{\frac{-1-x}2}, q^{\frac{1-x}2}, \ldots, q^{\frac{x-3}2} )$$ 
$$\times \mathcal S(\hat{\chi}^{GL_{p_2}}_{M_{0,p_2}}  f_{p_2 \alpha s_2})(q^{\frac{1-x}2}, q^{\frac{3-x}2}, \ldots, q^{\frac{x-1}2} ).$$ 

If $n$ is even (with $n \geq 4$) and $(p_1,p_2) = ({\frac{n}2\pm 1},\frac n2 \mp1)$, then $\mathrm{Tr}(C_{\lambda}f_{n\alpha s},\mathrm{Speh}(x,2)(\varepsilon))$ 
$$= - q^{\alpha(\frac{s(n-s)}2 - \sum\limits_{i=1}^2 \frac{s_i(p_i - s_i)}2)}q^{c \alpha s}\mathcal S(\hat{\chi}^{GL_{p_1} }_{M_{0,p_1}} f_{p_1 \alpha s_1 } )(q^{\frac{-p_1}2}, q^{\frac{2-p_1}2}, \ldots, q^{\frac{p_1-2}2} ) $$
$$\times \mathcal S(\hat{\chi}^{GL_{p_2}}_{M_{0,p_2}}  f_{p_2 \alpha s_2})(q^{\frac{-p_2}2}, q^{\frac{2-p_2}2}, \ldots, q^{\frac{p_2-2}2} ).$$

Let $\varepsilon_{i}: F^{\times} \rightarrow \mathbb C^{\times}$ ($i = 1,2$) be a unitary unramified character, let $c_i \in \mathbb C$ such that $\varepsilon(\pi_F) = |\pi_F|^{-c_i} = q^{c_i}$ ($i = 1,2$), let $c \in (\frac 12, \frac 12) \subset \mathbb R$.

(5) If $n$ is odd, or $n$ is even and $p_1 \not = p_2$, then $\mathrm{Tr}(C_{\lambda}f_{n\alpha s},\mathrm{Ind}_{P}^{GL_n}(\mathrm{St}_{GL_{p_1}(F)}(\varepsilon_1)\otimes \mathrm{St}_{GL_{p_2}(F)}(\varepsilon_2)))$ 
$$=  (-1)^{n}q^{\alpha(\frac{s(n-s)}2 - \sum\limits_{i=1}^2 \frac{s_i(p_i - s_i)}2)}q^{c_1\alpha s_1}\mathcal S(\hat{\chi}^{GL_{p_1} }_{M_{0,p_1}} f_{p_1 \alpha s_1 } )(q^{\frac{1-p_1}2}, q^{\frac{3-p_1}2}, \ldots, q^{\frac{p_1-1}2} )$$ 
$$\times q^{c_2\alpha s_2}\mathcal S(\hat{\chi}^{GL_{p_2}}_{M_{0,p_2}}  f_{p_2 \alpha s_2})(q^{\frac{1-p_2}2}, q^{\frac{3-p_2}2}, \ldots, q^{\frac{p_2-1}2} );$$
if $n$ is even and $p_1 = p_2 = \frac n2$, then $\mathrm{Tr}(C_{\lambda}f_{n\alpha s},\mathrm{Ind}_{P}^{GL_n}(\mathrm{St}_{GL_{p_1}(F)}(\varepsilon|\cdot|^{r})\otimes \mathrm{St}_{GL_{p_2}(F)}(\varepsilon|\cdot|^{-r})))$ 
$$=  q^{\alpha(\frac{s(n-s)}2 - \sum\limits_{i=1}^2 \frac{s_i(p_i - s_i)}2)}q^{c\alpha s}(q^{r\alpha (s_2 - s_1)}\mathcal S(\hat{\chi}^{GL_{p_1} }_{M_{0,p_1}} f_{p_1 \alpha s_1 } )(q^{\frac{1-p_1}2}, q^{\frac{3-p_1}2}, \ldots, q^{\frac{p_1-1}2} )$$
$$\times \mathcal S(\hat{\chi}^{GL_{p_2}}_{M_{0,p_2}}  f_{p_2 \alpha s_2})(q^{\frac{1-p_2}2}, q^{\frac{3-p_2}2}, \ldots, q^{\frac{p_2-1}2} )$$
$$+q^{r\alpha (s_1-s_2)}\mathcal S(\hat{\chi}^{GL_{p_1} }_{M_{0,p_1}} f_{p_1 \alpha s_1 } )(q^{\frac{1-p_1}2}, q^{\frac{3-p_1}2}, \ldots, q^{\frac{p_1-1}2} ) 
 q^{c_1\alpha s_2}\mathcal S(\hat{\chi}^{GL_{p_2}}_{M_{0,p_2}}  f_{p_2 \alpha s_2})(q^{\frac{1-p_2}2}, q^{\frac{3-p_2}2}, \ldots, q^{\frac{p_2-1}2} ));$$ and $\mathrm{Tr}(C_{\lambda}f_{n\alpha s},\mathrm{Ind}_{P}^{GL_n}(\mathrm{St}_{GL_{p_1}(F)}(\varepsilon_1)\otimes \mathrm{St}_{GL_{p_2}(F)}(\varepsilon_2)))$ 
$$=  q^{\alpha(\frac{s(n-s)}2 - \sum\limits_{i=1}^2 \frac{s_i(p_i - s_i)}2)}(q^{c_1\alpha s_1}\mathcal S(\hat{\chi}^{GL_{p_1} }_{M_{0,p_1}} f_{p_1 \alpha s_1 } )(q^{\frac{1-p_1}2}, q^{\frac{3-p_1}2}, \ldots, q^{\frac{p_1-1}2} )$$
$$\times q^{c_2\alpha s_2}\mathcal S(\hat{\chi}^{GL_{p_2}}_{M_{0,p_2}}  f_{p_2 \alpha s_2})(q^{\frac{1-p_2}2}, q^{\frac{3-p_2}2}, \ldots, q^{\frac{p_2-1}2} )$$
$$+q^{c_2\alpha s_1}\mathcal S(\hat{\chi}^{GL_{p_1} }_{M_{0,p_1}} f_{p_1 \alpha s_1 } )(q^{\frac{1-p_1}2}, q^{\frac{3-p_1}2}, \ldots, q^{\frac{p_1-1}2} ) 
 q^{c_1\alpha s_2}\mathcal S(\hat{\chi}^{GL_{p_2}}_{M_{0,p_2}}  f_{p_2 \alpha s_2})(q^{\frac{1-p_2}2}, q^{\frac{3-p_2}2}, \ldots, q^{\frac{p_2-1}2} )).$$

(6) If $n$ is odd, or $n$ is even and $p_1 \not = p_2$, then $\mathrm{Tr}(C_{\lambda}f_{n\alpha s},\mathrm{Ind}_{P}^{GL_n}(\mathrm{1}_{GL_{p_1}(F)}(\varepsilon_1)\otimes \mathrm{1}_{GL_{p_2}(F)}(\varepsilon_2)))$ 
$$=  (-1)^{n}q^{\alpha(\frac{s(n-s)}2 - \sum\limits_{i=1}^2 \frac{s_i(p_i - s_i)}2)}q^{c_1\alpha s_1}\mathcal S(\hat{\chi}^{GL_{p_1} }_{M_{0,p_1}} f_{p_1 \alpha s_1 } )(q^{\frac{1-p_1}2}, q^{\frac{3-p_1}2}, \ldots, q^{\frac{p_1-1}2} )$$ 
$$\times q^{c_2\alpha s_2}\mathcal S(\hat{\chi}^{GL_{p_2}}_{M_{0,p_2}}  f_{p_2 \alpha s_2})(q^{\frac{1-p_2}2}, q^{\frac{3-p_2}2}, \ldots, q^{\frac{p_2-1}2} );$$

if $n$ is even and $p_1 = p_2 = \frac n2$, then $\mathrm{Tr}(C_{\lambda}f_{n\alpha s},\mathrm{Ind}_{P}^{GL_n}(\mathrm{1}_{GL_{p_1}(F)}(\varepsilon|\cdot|^{r})\otimes \mathrm{1}_{GL_{p_2}(F)}(\varepsilon|\cdot|^{-r})))$ 
$$=  q^{\alpha(\frac{s(n-s)}2 - \sum\limits_{i=1}^2 \frac{s_i(p_i - s_i)}2)}q^{c\alpha s}(q^{r\alpha (s_2 - s_1)}\mathcal S(\hat{\chi}^{GL_{p_1} }_{M_{0,p_1}} f_{p_1 \alpha s_1 } )(q^{\frac{1-p_1}2}, q^{\frac{3-p_1}2}, \ldots, q^{\frac{p_1-1}2} )$$
$$\times \mathcal S(\hat{\chi}^{GL_{p_2}}_{M_{0,p_2}}  f_{p_2 \alpha s_2})(q^{\frac{1-p_2}2}, q^{\frac{3-p_2}2}, \ldots, q^{\frac{p_2-1}2} )$$
$$+q^{r \alpha (s_1-s_2)}\mathcal S(\hat{\chi}^{GL_{p_1} }_{M_{0,p_1}} f_{p_1 \alpha s_1 } )(q^{\frac{1-p_1}2}, q^{\frac{3-p_1}2}, \ldots, q^{\frac{p_1-1}2} ) 
 q^{c_1\alpha s_2}\mathcal S(\hat{\chi}^{GL_{p_2}}_{M_{0,p_2}}  f_{p_2 \alpha s_2})(q^{\frac{1-p_2}2}, q^{\frac{3-p_2}2}, \ldots, q^{\frac{p_2-1}2} ));$$ and $\mathrm{Tr}(C_{\lambda}f_{n\alpha s},\mathrm{Ind}_{P}^{GL_n}(\mathrm{1}_{GL_{p_1}(F)}(\varepsilon_1)\otimes \mathrm{1}_{GL_{p_2}(F)}(\varepsilon_2)))$ 
$$=  q^{\alpha(\frac{s(n-s)}2 - \sum\limits_{i=1}^2 \frac{s_i(p_i - s_i)}2)}(q^{c_1\alpha s_1}\mathcal S(\hat{\chi}^{GL_{p_1} }_{M_{0,p_1}} f_{p_1 \alpha s_1 } )(q^{\frac{1-p_1}2}, q^{\frac{3-p_1}2}, \ldots, q^{\frac{p_1-1}2} )$$
$$\times q^{c_2\alpha s_2}\mathcal S(\hat{\chi}^{GL_{p_2}}_{M_{0,p_2}}  f_{p_2 \alpha s_2})(q^{\frac{1-p_2}2}, q^{\frac{3-p_2}2}, \ldots, q^{\frac{p_2-1}2} )$$
$$+q^{c_2\alpha s_1}\mathcal S(\hat{\chi}^{GL_{p_1} }_{M_{0,p_1}} f_{p_1 \alpha s_1 } )(q^{\frac{1-p_1}2}, q^{\frac{3-p_1}2}, \ldots, q^{\frac{p_1-1}2} ) 
 q^{c_1\alpha s_2}\mathcal S(\hat{\chi}^{GL_{p_2}}_{M_{0,p_2}}  f_{p_2 \alpha s_2})(q^{\frac{1-p_2}2}, q^{\frac{3-p_2}2}, \ldots, q^{\frac{p_2-1}2} )).$$

\end{proposition}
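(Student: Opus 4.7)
The strategy is to reduce each trace to an explicit Satake transform evaluation by applying Proposition 3.4 followed by a Jacquet-module analysis via the geometric lemma. By Proposition 3.4,
\[
\mathrm{Tr}(C_{\lambda} f_{n\alpha s}, \pi) = \mathrm{Tr}\bigl(C^{M_{\lambda}(F)}_{c}\,(f_{p_1\alpha s_1}\otimes f_{p_2 \alpha s_2}),\, J_{N_{\lambda}}(\pi)\bigr),
\]
so it suffices to identify the surviving constituents of $J_{N_\lambda}(\pi)$ and evaluate the resulting compact trace. For each $\pi$ in the statement I first expand $\pi$ in the Grothendieck group of admissible representations of $GL_n(F)$, using Zelevinsky's identity (Theorem 5.4 of \cite{Tad95}) in the Speh cases and the trivial expansion in the $(p_1,p_2)$-type cases. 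I then apply Theorem VI.5.1 of \cite{Renard10} to each resulting induction, producing a signed sum of inductions on $M_\lambda$ indexed by $S^{\mathrm{min}}_{\nu,\lambda}$, and discard those summands that are inductions from a proper parabolic of $M_\lambda$: by Proposition 5.1 combined with Lemma 5.1, the hypothesis $(s_1, p_1) = (s_2, p_2) = 1$ forces such contributions to vanish. In each case this leaves only a handful of explicit unramified principal series summands on $M_\lambda$, whose compact traces against $f_{p_1\alpha s_1}\otimes f_{p_2 \alpha s_2}$ are computed by Lemma 5.1 and the Satake isomorphism applied to the $\hat{\chi}$-truncations produced in Proposition 3.2.

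For parts (1) and (2), $\pi = \mathrm{St}_{GL_n(F)}(\varepsilon)$ or $\mathrm{1}_{GL_n(F)}(\varepsilon)$, i.e.\ $\mathrm{Speh}(n,1)(\varepsilon)$ or $\mathrm{Speh}(1,n)(\varepsilon)$. Writing these as Langlands quotients of principal series, the surviving part of $J_{N_\lambda}(\pi)$ is an unramified principal series on $M_\lambda$ whose Hecke matrix consists of the $n$ consecutive values $q^{(1-n)/2}, q^{(3-n)/2}, \ldots, q^{(n-1)/2}$ split between the two blocks according to the recipe in Definition 5.4, and the extra factor $q^{c\alpha s}$ records the twist by $\varepsilon$. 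The sign $(-1)^{n-1}$ in (1) is the Zelevinsky sign of the long element appearing in the Steinberg resolution. For parts (3) and (4), as observed in the proof of Proposition 5.2, Zelevinsky's identity expresses $\mathrm{Speh}(2,y)(\varepsilon)$ and $\mathrm{Speh}(x,2)(\varepsilon)$ as alternating sums over $S^{(x)}_y$ (resp.\ $S^{(y)}_x$) of inductions of Steinbergs, and the hypothesis $(s_i,p_i)=1$ kills all summands whose $GL_{p_i}$ blocks are built from more than one Steinberg factor. Only the two permutations corresponding to $(w(1),w(2))=(y,y-1)$ and $(y-1,y)$ can survive; when $(p_1,p_2)=(n/2,n/2)$ both contribute, combining to the symmetric two-term sum, while when $(p_1,p_2)=(n/2\pm 1,n/2\mp 1)$ only the term with matching block sizes survives and the sign $-1$ in the $(n/2\pm 1,n/2\mp 1)$ branch of (4) is the Zelevinsky sign of that unique surviving permutation.

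For parts (5) and (6), $\pi$ is already parabolically induced from $P_\lambda$, so the geometric lemma reduces to a sum over $S_{(p_1,p_2)}\backslash S_n/S_{(p_1,p_2)}$. By Proposition 4.3 only the double cosets with $M_\lambda \cap w^{-1}\cdot M_{(p_1,p_2)} = M_\lambda$ contribute, which are just the trivial coset when $p_1 \ne p_2$ and additionally the non-trivial block-swap when $p_1 = p_2 = n/2$. The block-swap produces the second summand of the symmetric formula, in which the Hecke-matrix entries of the two Steinberg (or trivial) factors are interchanged, and the signs and twist factors $q^{c_i\alpha s_j}$, $q^{r\alpha(s_j-s_i)}$ are read off directly from the Langlands data of each factor. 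The main bookkeeping obstacle across all six cases is the coherent tracking of the modular character $\delta^{-1/2}_{GL_n,P_\lambda}$ appearing in the normalized Jacquet module, the Langlands-data twists of each Steinberg or trivial constituent, and the Weyl action permuting them: these three ingredients combined pin down the exact arguments at which the Satake transforms $\mathcal{S}(\hat{\chi}^{GL_{p_i}}_{M_{0,p_i}} f_{p_i\alpha s_i})$ are evaluated, and any slip in the exponents is what makes the closed formulas so lengthy.
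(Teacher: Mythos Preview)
Your overall strategy coincides with the paper's: reduce via Proposition~3.4 to a compact trace on $M_\lambda$, identify the relevant pieces of $J_{N_\lambda}(\pi)$ using the geometric lemma (Theorem~VI.5.1 of \cite{Renard10}) together with Proposition~5.1 and Lemma~5.1 to kill properly-induced summands, and then evaluate the survivors as Satake transforms at explicit Hecke matrices. Parts (3)--(6) match the paper's argument essentially line for line.

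Your account of parts (1) and (2) is where the sketch is imprecise. For the Steinberg, $J_{N_\lambda}(\mathrm{St}_{GL_n(F)}(\varepsilon))$ is not an unramified principal series on $M_\lambda$: by Rodier's result (Proposition~2 of \cite{Rod86}, which the paper invokes) it is $\mathrm{St}_{M_\lambda(F)}(\delta_{P_\lambda}^{1/2}\cdot\varepsilon)$, i.e.\ again a Steinberg. The passage from the compact trace against this Steinberg to the expression $(-1)^{n-1}\mathcal S(\hat\chi^{GL_{p_1}\times GL_{p_2}}_{M_{0,n}} f_{p_1\alpha s_1}\otimes f_{p_2\alpha s_2})(\cdots)$ is then a separate, non-trivial combinatorial identity---this is exactly Proposition~6 of \cite{Kret11} (and Proposition~7 for the trivial representation), followed by Lemma~7.7 of \cite{Kret15} to rewrite the $\hat\chi$-truncated trace as a Satake evaluation. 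It is not simply a matter of ``the long element surviving'' in a Zelevinsky resolution: all terms of the alternating sum contribute, and the $\hat\chi$ function packages the resulting inclusion--exclusion. Your end formulas are correct, but you should name these two ingredients (Rodier's Jacquet-module computation and Kret's compact-trace identity for the Steinberg) explicitly rather than folding them into ``Lemma~5.1 and the Satake isomorphism.''
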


\begin{proof}
Let $\pi$ be a semi-stable rigid representation of $GL_n(F)$, by applying Proposition 3.4, we have $$\mathrm{Tr}(C_{\lambda} f_{n\alpha s}, \pi) =q^{\alpha(\frac{s(n-s)}2 - \sum\limits_{i=1}^2 \frac{s_i(p_i - s_i)}2)}\mathrm{Tr}(C^{M_{(p_1,p_2)}(F)}_{c}  (f_{p_1 \alpha s_1} \otimes f_{p_2 \alpha s_2}), \pi_{N_{(p_1, p_2)}}(\delta_{GL_n, P_{(p_1,p_2)}}^{-\frac 12}) ).$$

(1) If $\pi = \mathrm{St}_{GL_n(F)}(\varepsilon)$, by Proposition 2 of \cite{Rod86}, we have $$\mathrm{Tr}(C_{\lambda} f_{n\alpha s}, \pi) =q^{\alpha(\frac{s(n-s)}2 - \sum\limits_{i=1}^2 \frac{s_i(p_i - s_i)}2)}\mathrm{Tr}(C^{M_{(p_1,p_2)}(F)}_{c}  (f_{p_1 \alpha s_1} \otimes f_{p_2 \alpha s_2}), \mathrm{St}_{M_{(p_1, p_2)}(F)}(\delta_{GL_n, P_{(p_1,p_2)}}^{\frac 12} \cdot \varepsilon) ).$$ By repeating the proof of Proposition 6 in \cite{Kret11}, we have $$\mathrm{Tr}(C^{M_{(p_1,p_2)}(F)}_{c}  (f_{p_1 \alpha s_1} \otimes f_{p_2 \alpha s_2}), \mathrm{St}_{M_{(p_1, p_2)}(F)}(\delta_{GL_n, P_{(p_1,p_2)}}^{\frac 12} \cdot \varepsilon) )$$ 
$$= (-1)^{p_1 + p_2 - 2}\mathrm{Tr}(\hat{\chi}^{GL_{p_1} \times GL_{p_2}}_{M_{0,n}} f_{p_1 \alpha s_1} \otimes f_{p_2 \alpha s_2}, \delta_{GL_n, P_{(p_1,p_2)}}^{\frac 12}|_{P_{0,n}(F)} \cdot \delta_{ M_{(p_1,p_2)}, P_{0,n}}^{\frac 12 } ( \varepsilon))$$

$$= (-1)^n\mathrm{Tr}(\hat{\chi}^{GL_{p_1} \times GL_{p_2}}_{M_{0,n}} f_{p_1 \alpha s_1} \otimes f_{p_2 \alpha s_2}, \delta_{ GL_n, P_{0,n}}^{\frac 12}( \varepsilon)) .$$ By Lemma 7.7 of \cite{Kret15}, we have $$\mathrm{Tr}(\hat{\chi}^{GL_{p_1} \times GL_{p_2}}_{M_{0,n}} f_{p_1 \alpha s_1} \otimes f_{p_2 \alpha s_2}, \delta_{ GL_n, P_{0,n}}^{\frac 12}( \varepsilon)) = \mathcal S(\hat{\chi}^{GL_{p_1} \times GL_{p_2}}_{M_{0,n}} f_{p_1 \alpha s_1} \otimes f_{p_2 \alpha s_2})(\epsilon_{\delta_{ GL_n, P_{0,n}( \varepsilon)}^{\frac 12}})$$ with $\epsilon_{\delta_{ GL_n, P_{0,n}}^{\frac 12}} = (q^{c\frac{1-n}2}, q^{c\frac{3-n}2}, \ldots, q^{c\frac{n-1}2})$.  Also note that $\mathcal S(\hat{\chi}^{GL_{p_1} \times GL_{p_2}}_{M_{0,n}} f_{p_1 \alpha s_1} \otimes f_{p_2 \alpha s_2})$ is homogenous of degree $\alpha s$, and $$\mathcal S(\hat{\chi}^{GL_{p_1} \times GL_{p_2}}_{M_{0,n}} f_{p_1 \alpha s_1} \otimes f_{p_2 \alpha s_2}) = \mathcal S(\hat{\chi}^{GL_{p_1}}_{M_{0,p_1}} f_{p_1 \alpha s_1}) \otimes \mathcal S(\hat{\chi}^{GL_{p_2}}_{M_{0,p_2}} f_{p_2 \alpha s_2}), $$ then we proved (1). 

(2) If $\pi = \mathrm{1}_{GL_n(F)}(\varepsilon)$, we have $$\mathrm{Tr}(C_{\lambda} f_{n\alpha s}, \pi) =q^{\alpha(\frac{s(n-s)}2 - \sum\limits_{i=1}^2 \frac{s_i(p_i - s_i)}2)}\mathrm{Tr}(C^{M_{(p_1,p_2)}(F)}_{c}  (f_{p_1 \alpha s_1} \otimes f_{p_2 \alpha s_2}), \mathrm{1}_{M_{(p_1, p_2)}(F)}(\delta_{GL_n, P_{(p_1,p_2)}}^{-\frac 12} \cdot \varepsilon) ).$$ By repeating the proof of Proposition 7 in \cite{Kret11}, we have $$\mathrm{Tr}(C^{M_{(p_1,p_2)}(F)}_{c}  (f_{p_1 \alpha s_1} \otimes f_{p_2 \alpha s_2}), \mathrm{1}_{M_{(p_1, p_2)}(F)}(\delta_{GL_n, P_{(p_1,p_2)}}^{-\frac 12} \cdot \varepsilon) )$$ 
$$= (-1)^{p_1 + p_2 - 2}\mathrm{Tr}(C^{M_{(p_1,p_2)}(F)}_{c}  (f_{p_1 \alpha s_1} \otimes f_{p_2 \alpha s_2}), \mathrm{St}_{M_{(p_1, p_2)}(F)}(\delta_{GL_n, P_{(p_1,p_2)}}^{-\frac 12} \cdot \varepsilon) )$$

$$= \mathrm{Tr}(\hat{\chi}^{GL_{p_1} \times GL_{p_2}}_{M_{0,n}} f_{p_1 \alpha s_1} \otimes f_{p_2 \alpha s_2}, \delta_{GL_n, P_{(p_1,p_2)}}^{-\frac 12}|_{P_{0,n}(F)} \cdot \delta_{ M_{(p_1,p_2)}, P_{0,n}}^{\frac 12 } ( \varepsilon)) .$$

$$=\mathcal S(\hat{\chi}^{GL_{p_1} \times GL_{p_2}}_{M_{0,n}} f_{p_1 \alpha s_1} \otimes f_{p_2 \alpha s_2})(\epsilon_{\delta_{GL_n, P_{(p_1,p_2)}}^{-\frac 12}|_{P_{0,n}(F)} \cdot \delta_{ M_{(p_1,p_2)}, P_{0,n}}^{\frac 12 } ( \varepsilon)})$$
 with $$\epsilon_{\delta_{GL_n, P_{(p_1,p_2)}}^{-\frac 12}|_{P_{0,n}(F)} \cdot \delta_{ M_{(p_1,p_2)}, P_{0,n}}^{\frac 12 } ( \varepsilon)} =\epsilon_{\delta_{GL_n, P_{0,n}}^{-\frac 12} \cdot \delta_{ M_{(p_1,p_2)}, P_{0,n}} ( \varepsilon)}$$ 
 $$(q^{c(1-p_1) + c\frac{n-1}2}, q^{c(3-p_1) + c\frac{n-3}2}, \ldots, q^{c(p_1-1) + c\frac{n -1 - 2(p_1 - 1)}2},$$
 
 $$q^{c(1-p_2) + c\frac{n-1 - 2p_1}2}, q^{c(3-p_2) + c\frac{n-1 - 2p_1 - 2}2}, \ldots, q^{c(p_2-1) + c\frac{ 1-n }2} ).$$ Also note that $\mathcal S(\hat{\chi}^{GL_{p_1} \times GL_{p_2}}_{M_{0,n}} f_{p_1 \alpha s_1} \otimes f_{p_2 \alpha s_2})$ is homogenous of degree $\alpha s$, and $$\mathcal S(\hat{\chi}^{GL_{p_1} \times GL_{p_2}}_{M_{0,n}} f_{p_1 \alpha s_1} \otimes f_{p_2 \alpha s_2}) = \mathcal S(\hat{\chi}^{GL_{p_1}}_{M_{0,p_1}} f_{p_1 \alpha s_1}) \otimes \mathcal S(\hat{\chi}^{GL_{p_2}}_{M_{0,p_2}} f_{p_2 \alpha s_2}),$$ then we proved (2). 
 
(3) If $\pi = \mathrm{Speh}(2,y)(\varepsilon)$, and if $n$ is even and $(p_1,p_2) = ({\frac{n}2},\frac n2) =(y,y)$. From the proof of Proposition 5.2, we have $$\mathrm{Tr}(C^{M_{(p_1,p_2)}(F)}_{c}  (f_{p_1 \alpha s_1} \otimes f_{p_2 \alpha s_2}), \pi_{N_{(p_1,p_2)}} (\delta_{GL_n, P_{y,y}}^{-\frac 12})) = $$
$$\pm \mathrm{Tr}(C^{M_{(y,y)}(F)}_{c}  (f_{y \alpha s_1} \otimes f_{y \alpha s_2}), J_{N_{(y,y)}}(\mathrm{Ind}_{P_{(y,y)}}^{GL_n}(\mathrm{St}_{GL_y(F)}\otimes \mathrm{St}_{GL_y(F)})(\varepsilon) ));$$ such that we take the plus sign if $y>3$, we take the minus sign if $ y = 3$. By Theorem VI.5.1 of \cite{Renard10} (see also the proof of Proposition 5.2), Proposition 4.4 and our computation of $J_{N_{(y,y)}}(\mathrm{Ind}_{P_{(y,y)}}^{GL_n}(\mathrm{St}_{GL_y(F)}\otimes \mathrm{St}_{GL_y(F)})(\varepsilon) )$ in the proof of Proposition 5.2, we have $$\mathrm{Tr}(C^{M_{(y,y)}(F)}_{c}  (f_{y \alpha s_1} \otimes f_{y \alpha s_2}), J_{N_{(y,y)}}(\mathrm{Ind}_{P_{(y,y)}}^{GL_n}(\mathrm{St}_{GL_y(F)}(|\cdot|^{-1})\otimes \mathrm{St}_{GL_y(F)})(\varepsilon))) = $$

$$\mathrm{Tr}(C^{M_{(y,y)}(F)}_{c}  (f_{y \alpha s_1} \otimes f_{y \alpha s_2}), \mathrm{St}_{GL_y(F)}(\varepsilon|\cdot|^{-1})\otimes \mathrm{St}_{GL_y(F)}(\varepsilon)) $$

$$+ \mathrm{Tr}(C^{M_{(y,y)}(F)}_{c}  (f_{y \alpha s_1} \otimes f_{y \alpha s_2}), w \cdot \mathrm{St}_{GL_y(F)}(\varepsilon|\cdot|^{-1})\otimes \mathrm{St}_{GL_y(F)}(\varepsilon))$$
(where $w \in S_n$ is the permutation sending $1, 2, \ldots, y$ to $y+1, \ldots, 2y $ respectively)

$$ = \mathrm{Tr}(C^{M_{(y,y)}(F)}_{c}  (f_{y \alpha s_1} \otimes f_{y \alpha s_2}), \mathrm{St}_{GL_y(F)}(\varepsilon|\cdot|^{-1})\otimes \mathrm{St}_{GL_y(F)}(\varepsilon)) $$

$$+ \mathrm{Tr}(C^{M_{(y,y)}(F)}_{c}  (f_{y \alpha s_1} \otimes f_{y \alpha s_2}), \mathrm{St}_{GL_y(F)}(\varepsilon)\otimes \mathrm{St}_{GL_y(F)}(\varepsilon|\cdot|^{-1}))$$
Repeat the proof of Proposition 6 in \cite{Kret11}, the above equality equals

$$q^{c\alpha s}(\mathcal S(f_{y \alpha s_1})(q^{\frac{1-y}2}, q^{\frac{3-y}2}, \ldots, q^{\frac{y-1}2} )\mathcal S(f_{y \alpha s_2})(q^{\frac{-y}2}, q^{\frac{1-y}2}, \ldots, q^{\frac{y-2}2} ) $$

$$+ \mathcal S(f_{y \alpha s_2})(q^{\frac{1-y}2}, q^{\frac{3-y}2}, \ldots, q^{\frac{y-1}2} )\mathcal S(f_{y \alpha s_1})(q^{\frac{-y}2}, q^{\frac{1-y}2}, \ldots, q^{\frac{y-2}2} )).$$
The case when $(p_1,p_2) = ({\frac{n}2\pm 1},\frac n2 \mp1)$ is similar to the case when $(p_1,p_2) = ({\frac{n}2},\frac n2 )$.

(4) The proof of (4) is identical to the proof of $(3)$. 

(5) We first consider the case when $n $ is even and $p_1 = p_2$, and we assume $$\pi = \mathrm{Ind}_{P}^{GL_n}(\mathrm{St}_{GL_{p_1}(F)}(\varepsilon|\cdot|^{r})\otimes \mathrm{St}_{GL_{p_2}(F)}(\varepsilon|\cdot|^{-r}))).$$ We only need to compute $$\mathrm{Tr}(C^{M_{(\frac n2,\frac n2)}(F)}_{c}  (f_{\frac n2 \alpha s_1} \otimes f_{\frac n2 \alpha s_2}), J_{N_{(\frac n2,\frac n2)}}(\mathrm{Ind}_{P_{(\frac n2,\frac n2)}}^{GL_n}(\mathrm{St}_{GL_{\frac n2}(F)}(\varepsilon|\cdot|^{r})\otimes \mathrm{St}_{GL_{\frac n2}(F)}(\varepsilon|\cdot|^{-r})))).$$ By Theorem VI 5.4 in \cite{Renard10} and Proposition 4.4, we have 
$$J_{N_{(\frac n2,\frac n2)}}(\mathrm{Ind}_{P_{(\frac n2,\frac n2)}}^{GL_n}(\mathrm{St}_{GL_{\frac n2}(F)}(\varepsilon|\cdot|^{r})\otimes \mathrm{St}_{GL_{\frac n2}(F)}(\varepsilon|\cdot|^{-r}))) =$$
$$\mathrm{St}_{GL_{\frac n2}(F)}(\varepsilon|\cdot|^{r})\otimes \mathrm{St}_{GL_{\frac n2}(F)}(\varepsilon|\cdot|^{-r}) + w \cdot \mathrm{St}_{GL_{\frac n2}(F)}(\varepsilon|\cdot|^{r})\otimes \mathrm{St}_{GL_{\frac n2}(F)}(\varepsilon|\cdot|^{-r}),$$ where $w $ is the permutation sending $1, 2, \ldots \frac n2$ to $\frac n2+1, \frac n2+2, \ldots n$ respectively. Then we have $$\mathrm{Tr}(C^{M_{(\frac n2,\frac n2)}(F)}_{c}  (f_{\frac n2 \alpha s_1} \otimes f_{\frac n2 \alpha s_2}), J_{N_{(\frac n2,\frac n2)}}(\mathrm{Ind}_{P_{(\frac n2,\frac n2)}}^{GL_n}(\mathrm{St}_{GL_{\frac n2}(F)}(\varepsilon|\cdot|^{r})\otimes \mathrm{St}_{GL_{\frac n2}(F)}(\varepsilon|\cdot|^{-r}))))$$

$$ = \mathrm{Tr}(C^{M_{(\frac n2,\frac n2)}(F)}_{c}  (f_{\frac n2 \alpha s_1} \otimes f_{\frac n2 \alpha s_2}), \mathrm{St}_{GL_{\frac n2}(F)}(\varepsilon|\cdot|^{r})\otimes \mathrm{St}_{GL_{\frac n2}(F)}(\varepsilon|\cdot|^{-r}))$$ 
$$+ \mathrm{Tr}(C^{M_{(\frac n2,\frac n2)}(F)}_{c}  (f_{\frac n2 \alpha s_1} \otimes f_{\frac n2 \alpha s_2}),w \cdot \mathrm{St}_{GL_{\frac n2}(F)}(\varepsilon|\cdot|^{r})\otimes \mathrm{St}_{GL_{\frac n2}(F)}(\varepsilon|\cdot|^{-r})).$$ By repeating the proof of Proposition 6 in \cite{Kret11}, we proved our results. The proofs of the rest of the cases in (5) are similar. 

(6) We first consider the case when $n $ is even and $p_1 = p_2$, and we assume $$\pi = \mathrm{Ind}_{P}^{GL_n}(\mathrm{1}_{GL_{p_1}(F)}(\varepsilon|\cdot|^{r})\otimes \mathrm{1}_{GL_{p_2}(F)}(\varepsilon|\cdot|^{-r}))).$$ We only need to compute $$\mathrm{Tr}(C^{M_{(\frac n2,\frac n2)}(F)}_{c}  (f_{\frac n2 \alpha s_1} \otimes f_{\frac n2 \alpha s_2}), J_{N_{(\frac n2,\frac n2)}}(\mathrm{Ind}_{P_{(\frac n2,\frac n2)}}^{GL_n}(\mathrm{1}_{GL_{\frac n2}(F)}(\varepsilon|\cdot|^{r})\otimes \mathrm{1}_{GL_{\frac n2}(F)}(\varepsilon|\cdot|^{-r})))).$$ By Theorem VI 5.4 in \cite{Renard10} and Proposition 4.4, we have 
$$J_{N_{(\frac n2,\frac n2)}}(\mathrm{Ind}_{P_{(\frac n2,\frac n2)}}^{GL_n}(\mathrm{1}_{GL_{\frac n2}(F)}(\varepsilon|\cdot|^{r})\otimes \mathrm{1}_{GL_{\frac n2}(F)}(\varepsilon|\cdot|^{-r}))) =$$
$$\mathrm{1}_{GL_{\frac n2}(F)}(\varepsilon|\cdot|^{r})\otimes \mathrm{1}_{GL_{\frac n2}(F)}(\varepsilon|\cdot|^{-r}) + w \cdot \mathrm{1}_{GL_{\frac n2}(F)}(\varepsilon|\cdot|^{r})\otimes \mathrm{1}_{GL_{\frac n2}(F)}(\varepsilon|\cdot|^{-r})$$ where $w $ is the permutation sending $1, 2, \ldots \frac n2$ to $\frac n2+1, \frac n2+2, \ldots n$ respectively. Then we have $$\mathrm{Tr}(C^{M_{(\frac n2,\frac n2)}(F)}_{c}  (f_{\frac n2 \alpha s_1} \otimes f_{\frac n2 \alpha s_2}), J_{N_{(\frac n2,\frac n2)}}(\mathrm{Ind}_{P_{(\frac n2,\frac n2)}}^{GL_n}(\mathrm{1}_{GL_{\frac n2}(F)}(\varepsilon|\cdot|^{r})\otimes \mathrm{1}_{GL_{\frac n2}(F)}(\varepsilon|\cdot|^{-r}))))$$

$$ = \mathrm{Tr}(C^{M_{(\frac n2,\frac n2)}(F)}_{c}  (f_{\frac n2 \alpha s_1} \otimes f_{\frac n2 \alpha s_2}), \mathrm{1}_{GL_{\frac n2}(F)}(\varepsilon|\cdot|^{r})\otimes \mathrm{1}_{GL_{\frac n2}(F)}(\varepsilon|\cdot|^{-r}))$$ 
$$+ \mathrm{Tr}(C^{M_{(\frac n2,\frac n2)}(F)}_{c}  (f_{\frac n2 \alpha s_1} \otimes f_{\frac n2 \alpha s_2}),w \cdot \mathrm{1}_{GL_{\frac n2}(F)}(\varepsilon|\cdot|^{r})\otimes \mathrm{1}_{GL_{\frac n2}(F)}(\varepsilon|\cdot|^{-r})).$$ By repeating the proof of Proposition 7 and Proposition 6 in \cite{Kret11}, we proved our results. The proofs of the rest of the cases in (6) are similar. 

\end{proof}

\section{Cohomology of certain Newton strata of Kottwitz varieties}

 Having established the necessary local computation in $\S 5$, we now turn to the global setting. Firstly, we introduce Kottwitz varieties and certain Newton strata of Kottwitz varieties:

	\begin{enumerate}
	      
    \item Let $D$ be a division algebra over $\mathbb Q $, and let $*$ be an anti-involution. Let $\overline{\mathbb Q} $ be the algebraic closure of $\mathbb Q$ inside $\mathbb C$. Let $F$ be the center of $D$, and we embed $F$ into $\overline{\mathbb Q}$. We assume that $F$ is a quadratic imaginary field  extension of $\mathbb Q$ and we assume that $*$ induces the complex conjugation on $F$.
	
 \item Let $G$ be the $\mathbb Q$-group defined as follows: Let $R$ be a commutative $\mathbb Q$-algebra, then $G(R)$ is defined to be
	$\{x \in D \otimes_{\mathbb Q} R: xx^* \in R^{\times}\}$. 
	
\item Let $h_0$ be an $\mathbb R$-algebra morphism $h_0: \mathbb C \rightarrow D_{\mathbb R}$ such that $h_0(z)^* = h_0(\overline z)$ for all $z \in \mathbb C$. And we assume that the involution of $D_{\mathbb R	}$ defined by $x \mapsto h_0(i)^{-1}xh_0(i)$ (for $x\in D_{\mathbb R}$) is positive. 

\item The Shimura datum: We use $h'$ to denote $h_0|_{\mathbb C^{\times}}$, then $h'$ is an algebraic group morphism from the Deligne torus $\mathbb S$ to $G_{\mathbb R}$. Let $h$ denote $h'^{-1}$ and let $X$ denote the $G(\mathbb R$)-conjugacy class of $h$, then the pair $(G,X)$ is a PEL-type Shimura datum (see \cite{Kot92}).

 \item Let $n$ be the positive integer such that $n^2$ equals the dimension of $D$ over $F$.

 \item The group structure of $G$ at $\mathbb R$: The group $G_{\mathbb R}$ is isomorphic to $ GU_{\mathbb C/\mathbb R}(s, n-s)$ for some $0 \leq  s \leq n $ (with $GU_{\mathbb C/\mathbb R}(s, n-s)$ as defined in $\S2.1$ of \cite{Morel08}).

\item The group structures of $G$ at $p$: Let $p$ be a prime number such that $G_{\mathbb Q_p}$ is unramified, and let $K \subset G(\mathbb A_f)$ be a compact open subgroup. such that $K$ decomposes into a product $K_pK^p$ where $K_p \subset G(\mathbb Q_p)$ is hyperspecial and $K^p \subset G(\mathbb A_f^p)$ is small enough (so that $Sh_K(G,X)$ is smooth). We assume that $p$ splits in extension $F/\mathbb Q$, therefore, the group $G_{\mathbb Q_p}$ is isomorphic to $\mathbb G_{m,\mathbb Q_p} \times GL_{n,\mathbb Q_p} $. 

\item Kottwitz varieties: Let $Sh_K(G,X)$ be the Shimura variety defined over $E$ (which is called the Kottwitz variety). Let $M_K$ be the moduli scheme over $\mathcal O_E \otimes_{\mathbb Z} \mathbb Z_{(p)}$ which represents the corresponding moduli problem of Abelian varieties of PEL-type (as defined in $\S 5$ of \cite{kot92p}, note that $M_{K,E}$ is $|\mathrm{ker}^1(\mathbb Q, G)|$ copies of $Sh_K(G,X)$).

 \item Reflex field: We use $E$ to denote the reflex field of the Shimura datum $(G,X)$. If $n-s \not = s $, then the reflex field is $F$, and if $n-s = s $, then the reflex field is $\mathbb Q$ {see \cite{Kot92}}.	Let $\mathfrak p$ be a prime ideal of $E$ over $p$, and let $E_{\mathfrak p, \alpha}$ denote an unramified extension of $E_{\mathfrak p}$ of degree $\alpha$.

\item The set $B(G_{\mathbb Q_p},\mu)$: Let $B(G_{\mathbb Q_p})$ denote the set of isocrystals with $G_{\mathbb Q_p}$-structure, which can be identified with the set of $\sigma$-conjugacy class in $G(L)$ (see \cite{Kot85}, where $L = \widehat{ \mathbb Q_p^{\mathrm{unr}}}$ is the completion of the maximal unramified extension of $\mathbb Q_p$). Note that by our assumption, the group $G_{\mathbb Q_p} $ is isomorphic to $\mathbb G_{m,\mathbb Q_p} \times GL_{n,\mathbb Q_p} $. Let $\nu: B(G_{\mathbb Q}) \rightarrow \mathbb R^{n+1}$ denote the Newton map (as defined in \cite{Ra02}), and we define $\mu = (s, \mu_1, \ldots, \mu_n) = (s, 1^s, 0^{n-s})$. We define $B(G_{\mathbb Q_p},\mu)$ to be $\{b \in B(G_{\mathbb Q_p}): \nu(b) = (s, \lambda_1, \ldots, \lambda_n) \in \mathbb R^{n+1} \text{ such that }(\lambda_1, \ldots, \lambda_n) \in \mathbb R^n_{\geq} \text{ satisfies that } \sum_{j = 1}^i\lambda_j \geq \sum_{j = 1}^i\mu_j \text{ for } 1 \leq i \leq n \text{ and }\sum_{i = 1}^n \lambda_i = s\}$. Note that the set $B(G_{\mathbb Q_p}, \mu)$ is finite (see $\S6$ of \cite{Kot97} or $\S1$ of \cite{Ra02}). 

\item Newton stratification: We write $q$ for the number of elements in the residue field of $\mathcal O_{E_{\mathfrak p}}$. 
 For each geometric point $x \in M_{K, \overline{\mathbb F}_q}$, let $b(\mathcal A_x) \in B(G_{\mathbb Q_p})$ denote the isocrystal with $G$-structure associated to the abelian variety $\mathcal A_x$ corresponding to $x$ (see pp.170-171 of \cite{RaRi96}), and we have that $b(\mathcal A_x) \in B(G_{\mathbb Q_p},\mu)$. For $b \in B(G_{\mathbb Q_p},\mu )$, we define $S_b$ to be $\{x \in M_{K, \overline{\mathbb F}_q}(\overline {\mathbb F}_q): b(\mathcal A_x) = b\}$. Then $S_b$ is a subvariety of $M_{K, \overline{\mathbb F}_q}$ (see $\S 1$ of \cite{Ra02}) and the collection $\{S_b\}_{b\in B(G_{\mathbb Q_p},\mu)}$ of subvarieties of $M_{K, \overline{\mathbb F}_q}$ is called the Newton stratification of $M_{K, \overline{\mathbb F}_q}$. 

\item The stratum $S_{b} $: We take $b \in B(G_{\mathbb Q}, \mu)$ such that $\nu(b) = (s, \lambda_1^{p_1}, \lambda_2^{p_2})$ (note that we must have $\lambda_1p_1, \lambda_2p_2 \in \mathbb Z$, see Corollary 1.3 of \cite{Ra02}), such that 
\begin{itemize}
\item $p_1, p_2 \in \mathbb Z_{>0}$ and $p_1 + p_2 = n$,
\item $(p_1 \lambda_1, p_1) = 1$ and $(p_2\lambda_2, p_2) = 1$. 
\end{itemize}

 We use $B_K$ to denote the locally closed subscheme of $M_{K, \mathbb F_q}$ (defined over $\mathbb F_q$), such that $B_{K,\overline{\mathbb F}_q}$ is $S_b$ (see Theorem 3.6 of \cite{RaRi96}).
The Hecke correspondences on $Sh_K(G,X)$ may be restricted to the subvariety $\iota : B_K \rightarrow
  M_{K, \mathbb F_q}$, and we have an action of the algebra $\mathcal H(G(\mathbb A_f)//K)$ on the cohomology spaces $H^i_{et,c} (S_b , \iota^*\mathcal L)$. Furthermore, the space $H^i_{et,c} (S_b , \iota^*\mathcal L)$ carries an action of the Galois
group $\mathrm{Gal}(\overline{\mathbb F}_{q} /\mathbb F_{q} )$, which commutes with the action of $\mathcal H(G(\mathbb A_f)//K)$.

\item Local systems: Let $\zeta$ be an irreducible algebraic representation over $\overline{\mathbb Q}$ of $G_{\overline{\mathbb Q}}$, associated with an $\ell$-adic local system on $\bar S$ (where $\ell \not = p$ is a prime number). Let $\overline{\mathbb Q}_{\ell}$ be an algebraic closure of $\mathbb Q_{\ell}$ with an embedding $\overline{\mathbb Q} \subset \overline{\mathbb Q}_{\ell}$. We write $\mathcal L$ for the $\ell$-adic local system on $M_{K,\mathcal O_{E_{p}}}$ associated to the representation $\zeta \otimes \mathbb Q_{\ell}$ of $G_{\bar{\mathbb Q}_{\ell}}$.

\item Hecke algebra at $\infty$:  Let $\mathfrak g$ be the Lie algebra of
$G(\mathbb R)$, and let $K_{\infty}$ denote the stabilizer subgroup in $G(\mathbb R) $ of the morphism $h$. We use $f_{ \zeta, \infty}$ to denote an Euler–Poincar{\'e} function associated to $\zeta$ (as introduced in \cite{Clo85}, see also $\S 3$ of \cite{Kot92}). Note that $f_{ \zeta, \infty}$ has the following property: Let $\pi_{\infty}$ be a $(\mathfrak g, K_{\infty})$-
module occurring as the component at infinity of an automorphic representation $\pi$ of
$G$. Then the trace of $f_{ \zeta, \infty}$ against $\pi_{\infty}$ is equal to
$N_{\infty}\sum_{i=1}^{\infty}(-1)^idim(H^i(\mathfrak g, K_{\infty};\pi_{\infty}\otimes \zeta))$, where $N_{\infty}$ is a certain explicit constant as defined in $\S 3$ of \cite{Kot92}. 

\item Hecke operator away from $p$ and $\infty$: Let $f ^{p\infty} \in \mathcal H(G(\mathbb A)//K^p)$ be an arbitrary $K^p$-bi-invariant Hecke operator. 

\item Hecke operator at $p$: Let $f_p\in \mathcal H^{\mathrm{unr}}(G(\mathbb Q_p))$ denote the function whose Satake transform is given by $ X^{\alpha} \otimes q^{\frac{\alpha s(n-s)}2}\sum\limits_{\substack{I \subset {1,2, \ldots, n}\\|I| = s}} (X^I)^ {\alpha}$ where $X^I:=\prod_{i \in I} X_i $. Let $\mathbb Q_{p,\alpha}$ denote a unramified extension of $\mathbb Q_p$ of degree $\alpha$. Let $\phi_{\alpha} \in \mathcal H^{\mathrm{unr}}(G(\mathbb Q_{p,\alpha}))$ denote the function whose Satake transform is given by $ Y \otimes q^{\frac{\alpha s(n-s)}2}\sum\limits_{\substack{I \subset {1,2, \ldots, n}\\|I| = s}} (Y^I)$. Then $\phi_{\alpha}$ and $f_p$ are associated in the sense of base change (as defined in $\S3.2$ of \cite{Lab99}). 
\end{enumerate}

We use $\lambda$ to denote $(\lambda_1^{p_1}, \lambda_2^{p_2}) \in \mathbb R_{\geq }^n$, and let $s_i\in\mathbb Z_{>0}$ denote $\lambda_ip_i$. Let $\mathrm{Frob}_q$ denote the Frobenius endomorphism of $ S_{\lambda} $ mapping $x$ to $x^q$, then we have 

\begin{theorem} Let $\alpha \in \mathbb Z_{>0}$, then we have: 
\begin{enumerate}
    
\item If $n$ is odd, or $n$ is even and $(p_1, p_2) \not = (\frac n2,\frac n2), (\frac n2 \pm 1, \frac n2 \mp 1)$, then $$\mathrm{Tr}(\mathrm{Frob}^{\alpha}_q\times f^{p \infty},\sum\limits_{i=0}^{\infty}H^i_{et,c}(S_{b},\iota^*\mathcal L)) = |\mathrm{ker}^1|(\mathbb Q, G) \sum\limits_{\substack{\pi \subset \mathcal A(G)\\ \rho_{\pi,p} \text{ is of }\text{ type I}} } c_{\pi}\mathrm{Tr}( C_{\lambda}f_{n\alpha s}, \rho_{\pi,p}) $$
\item If $n$ is even and $(p_1, p_2) = (\frac n2, \frac n2)$ or $(p_1,p_2) = (\frac n2 \pm 1, \frac n2 \mp 1)$, then $$\mathrm{Tr}(\mathrm{Frob}^{\alpha}_q\times f^{p \infty},\sum\limits_{i=0}^{\infty}H^i_{et,c}(S_{b},\iota^*\mathcal L)) = |\mathrm{ker}^1|(\mathbb Q, G) \sum\limits_{\substack{\pi \subset \mathcal A(G)\\ \rho_{\pi,p} \text{ is of }\text{ type II}} } c_{\pi}\mathrm{Tr}( C_{\lambda}f_{n\alpha s}, \rho_{\pi,p}) $$

\end{enumerate}
where
\begin{itemize}
\item the constant $c_\pi$ equals $\zeta_{\pi}^{\alpha} \mathrm{Tr}(f^{p\infty},\pi^{p \infty})N_{\infty}\sum_{i=1}^{\infty}(-1)^idim(H^i(\mathfrak g, K_{\infty};\pi_{\infty}\otimes \zeta))$ such that $\zeta_{\pi}$ is a root of unity. 
\item by assumption, we have $G(\mathbb Q_p) \cong \mathbb Q_p^{\times} \times GL_n(\mathbb Q_p)$, we write $\pi_p \cong \sigma_{\pi,p} \otimes \rho_{\pi,p} $, such that $\sigma_{\pi,p}$ (resp. $\rho_{\pi,p}$) is a representation of $\mathbb Q_p^{\times}$ (resp. $GL_n(\mathbb Q_p)$). 
\item the truncated traces $\mathrm{Tr}( C_{\lambda}f_{n\alpha s}, \rho_{\pi,p})$ where $\rho_{\pi,p}$ are representations of $GL_n(\mathbb Q_p)$ of type I or type II (as defined in Definition 5.3) are computed in Proposition 5.3. 

\end{itemize}
\end{theorem}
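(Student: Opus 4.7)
The plan is to follow the Langlands-Kottwitz method as adapted by Kret to Newton strata. First, I would invoke Proposition~9 of \cite{Kret11}, transcribed to our setting (a Kottwitz variety of signature $(s, n-s)$ at a prime $p$ split in $F/\mathbb Q$), to obtain the truncated point-counting identity
$$\mathrm{Tr}(\mathrm{Frob}^{\alpha}_q \times f^{p\infty}, \sum_i H^i_{et,c}(S_b, \iota^*\mathcal L)) = |\mathrm{ker}^1(\mathbb Q, G)| \, \mathrm{Tr}(f^{p\infty}(X^\alpha \otimes C_\lambda f_{n\alpha s}) f_{\zeta,\infty}, \mathcal A(G)).$$
The key input is that the truncation by $C_\lambda$ picks out exactly those isocrystals with Newton polygon $\nu(b)$, so Kottwitz's counting formula for the full integral model (combined with base change from $\phi_\alpha$ to $f_p$) reduces to a formula for the stratum $S_b$; the factor $|\mathrm{ker}^1(\mathbb Q, G)|$ accounts for the passage between $M_K$ and $\mathrm{Sh}_K(G,X)$ as recalled in the introduction.

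Next, I would spectrally expand the right-hand side over irreducible automorphic representations $\pi \subset \mathcal A(G)$:
$$\mathrm{Tr}(f^{p\infty}(X^\alpha \otimes C_\lambda f_{n\alpha s}) f_{\zeta,\infty}, \mathcal A(G)) = \sum_{\pi} \mathrm{Tr}(X^\alpha \otimes C_\lambda f_{n\alpha s}, \pi_p) \, \mathrm{Tr}(f^{p\infty}, \pi^{p\infty}) \, \mathrm{Tr}(f_{\zeta,\infty}, \pi_\infty).$$
Under the isomorphism $G(\mathbb Q_p) \cong \mathbb Q_p^\times \times GL_n(\mathbb Q_p)$ and the factorization $\pi_p = \sigma_{\pi,p} \otimes \rho_{\pi,p}$, the local factor at $p$ splits as the product of $\mathrm{Tr}(X^\alpha, \sigma_{\pi,p}) = \zeta_\pi^\alpha$ (a root of unity, since $\sigma_{\pi,p}$ is a unitary character of $\mathbb Q_p^\times$ contributing to an automorphic representation of the similitude group) and $\mathrm{Tr}(C_\lambda f_{n\alpha s}, \rho_{\pi,p})$. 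The archimedean trace is identified with $N_\infty \sum_i (-1)^i \dim H^i(\mathfrak g, K_\infty; \pi_\infty \otimes \zeta)$ by the defining property of the Euler-Poincar\'e function $f_{\zeta,\infty}$ recalled in item (14) above. Assembling these factors gives the constant $c_\pi$ of the theorem statement.

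Third, I would apply Theorem~2 of \cite{Kret11} to restrict the sum to $\pi$ for which $\rho_{\pi,p}$ is semi-stable rigid: for any other $\rho_{\pi,p}$ the local trace $\mathrm{Tr}(C_\lambda f_{n\alpha s}, \rho_{\pi,p})$ already vanishes by Proposition~3.5 combined with the classification of local components of discrete automorphic representations of $GL_n$. Proposition~5.2 further cuts this sum down: the only semi-stable rigid representations giving nonzero contribution are those of type~I in the generic case, respectively type~II in the exceptional cases $(p_1,p_2) = (n/2, n/2)$ or $(n/2 \pm 1, n/2 \mp 1)$. Substituting the explicit values from Proposition~5.3 for the surviving representations yields the two formulas in the theorem statement.

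The main obstacle will be the careful transcription of Proposition~9 of \cite{Kret11} into our precise setup: one must track the normalizations of Haar measures, the constants $|\mathrm{ker}^1(\mathbb Q,G)|$ and $N_\infty$, the base-change matching between $f_p$ and $\phi_\alpha$ (item (16)), and the stabilization of the elliptic part of the Kottwitz formula. The stabilization step simplifies significantly in our setting because $D$ is a division algebra, so $G$ is anisotropic modulo center; hence every automorphic representation is discrete, the contribution of non-elliptic terms vanishes, and one does not need to invoke endoscopic transfer beyond what is already built into \cite{Kret11}. A secondary point of care is verifying that the truncation $C_\lambda$ commutes with conjugation (used in the proof of Proposition~3.4) and is preserved under the passage from the Kottwitz point-counting formula to its spectral expansion; this is exactly what allows the local trace $\mathrm{Tr}(C_\lambda f_{n\alpha s}, \rho_{\pi,p})$ to appear on the automorphic side in the clean form asserted.
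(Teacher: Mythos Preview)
Your overall strategy matches the paper's: apply the truncated Kottwitz point-count (via Fujiwara's fixed-point formula and the arguments of \cite{Kret11}, \cite{Kret12b}), spectrally expand, factor the local trace at $p$, and then use Proposition~3.5, Theorem~2 of \cite{Kret11}, and Proposition~5.2 to cut the sum down to type~I or type~II representations. The ingredients and their roles are correctly identified.

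There is, however, one genuine gap. The identity you obtain from Proposition~9 of \cite{Kret11} ultimately rests on Fujiwara's form of Deligne's conjecture (Theorem~5.4.5 of \cite{Fuji97}), which only applies when $\alpha$ is sufficiently large relative to the Hecke correspondence $f^{p\infty}$. Your argument therefore proves the theorem only for $\alpha \gg 0$, whereas the statement is for every $\alpha \in \mathbb Z_{>0}$. The paper closes this gap with an additional step: once the identity is known for all large $\alpha$, one invokes the argument of Theorem~5.2.1 of \cite{Yachen24} to propagate it to arbitrary $\alpha$. You should either add this step or explain why it is unnecessary in your setup.

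A smaller point: your justification for restricting to semi-stable rigid $\rho_{\pi,p}$ runs the logic slightly out of order. One first needs to know that $\rho_{\pi,p}$ arises as the local component of a \emph{discrete} automorphic representation of $GL_n(\mathbb A)$ (this is the content of Proposition~11 of \cite{Kret11}, and uses the division-algebra hypothesis); only then does Theorem~2 of \cite{Kret11}, together with the Iwahori-fixed-vector criterion of Proposition~3.5, force $\rho_{\pi,p}$ to be semi-stable rigid. Similarly, the claim that $\zeta_\pi$ is a root of unity is not immediate from unitarity of $\sigma_{\pi,p}$ alone; the paper appeals to the argument on pp.~511--512 of \cite{Kret11}, which uses algebraicity of the central character coming from the cohomological condition at infinity.
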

\begin{proof}
By applying Theorem 5.4.5 of \cite{Fuji97}, we have $$\mathrm{Tr}(\mathrm{Frob}^{\alpha}_q\times f^{p \infty},\sum\limits_{i=0}^{\infty}H^i_{et,c}(S_{b},\iota^*\mathcal L)) = \sum\limits_{x \in \mathrm{Fix}^{S_{b}}_{\mathrm{Frob}^{\alpha}_q\times f^{p \infty}}(\overline{\mathbb F}_q)} \mathrm{Tr}(\mathrm{Frob}^{\alpha}_q\times f^{p \infty},\iota^*\mathcal L_x)$$ Repeat the proof of Proposition 10 of \cite{Kret11}, we have 

 $$\sum\limits_{x \in \mathrm{Fix}^{S_{b}}_{\mathrm{Frob}^{\alpha}_q\times f^{p \infty}}(\overline{\mathbb F}_q)} \mathrm{Tr}(\mathrm{Frob}^{\alpha}_q\times f^{p \infty},\iota^*\mathcal L_x) = |\mathrm{ker}^1(\mathbb Q, G)|\sum_{(\gamma_0;\gamma,\delta)} c(\gamma_0;\gamma,\delta) O_{\gamma}(f^{p\infty})TO_{\delta}(\phi_{\alpha})\mathrm{Tr}(\zeta_{\mathbb C}(\gamma_0))$$ where 
\begin{itemize}
\item The sum ranges over Kottwitz triples (as defined in $\S14$ of \cite{kot92p}) $(\gamma_0;\gamma,\delta)$ such that the Newton vector $\nu(\delta\sigma(\delta)\cdots\sigma^{[E_{\mathfrak p, \alpha}:\mathbb Q_p]-1}(\delta))\in \mathbb R^{n+1} $ equals $ (s,\lambda_1^{p_1},\lambda_2^{p_2})$ (where $\sigma$ is the Frobenius automorphism). 
\item $c(\gamma_0;\gamma,\delta)$ is a certain constant defined in $\S19$ of \cite{kot92p}.

\end{itemize}

Given $x \in G(\mathbb Q_{p,\alpha})$, let $N(x)$ denote $x\sigma(x)\cdots\sigma^{\alpha-1}(x).$ By the proof of Theorem 2.14 on pp.24-25 of \cite{Kret12b}, we have  $$|\mathrm{ker}^1(\mathbb Q, G)|\sum_{(\gamma_0;\gamma,\delta)} c(\gamma_0;\gamma,\delta) O_{\gamma}(f^{p\infty})TO_{\delta}(\phi_{\alpha})\mathrm{Tr}(\zeta_{\mathbb C}(\gamma_0)) $$
$$= |\mathrm{ker}^1(\mathbb Q, G)|\sum_{(\gamma_0;\gamma,\delta)} c(\gamma_0;\gamma,\delta) O_{\gamma}(f^{p\infty})TO_{\delta}(\chi^{\sigma}_{(s,\lambda_1^{p_1}, \lambda_2^{p_2})}\phi_{\alpha})\mathrm{Tr}(\zeta_{\mathbb C}(\gamma_0))$$ where $\chi^{\sigma}_{(s,\lambda_1^{p_1}, \lambda_2^{p_2})}$ is defined as follows: Let $x \in G(\mathbb Q_{p,\alpha})$, then $\chi^{\sigma}_{(s,\lambda_1^{p_1}, \lambda_2^{p_2})}(x) = 1$ if and only if $\nu(N(x)) = r (s, \lambda_1^{p_1}, \lambda_2^{p_2})$ for some $r \in \mathbb R_{>0}$. By repeating the argument in p509 of \cite{Kret11}, we have $$
 |\mathrm{ker}^1(\mathbb Q, G)|\sum_{(\gamma_0;\gamma,\delta)} c(\gamma_0;\gamma,\delta) O_{\gamma}(f^{p\infty})TO_{\delta}(\chi^{\sigma}_{(s,\lambda_1^{p_1}, \lambda_2^{p_2})}\phi_{\alpha})\mathrm{Tr}(\zeta_{\mathbb C}(\gamma_0))$$
 $$= |\mathrm{ker}^1(\mathbb Q, G)| \mathrm{Tr}(C_{(s, \lambda_1^{p_1}, \lambda_2^{p_2})}f_pf^{p\infty} f_{ \zeta, \infty},\mathcal A(G)),$$ where $\mathcal A(G)$ is the space of automorphic forms on $G$, and the function $C_{(s, \lambda_1^{p_1}, \lambda_2^{p_2})} \in \mathcal H(G(\mathbb Q_p))$ is defined in Definition 3.1. 

 Now we compute $\mathrm{Tr}(C_{(s, \lambda_1^{p_1}, \lambda_2^{p_2})}f_pf^{p\infty} f_{ \zeta, \infty},\mathcal A(G))$. We have $$\mathrm{Tr}(C_{(s, \lambda_1^{p_1}, \lambda_2^{p_2})}f_pf^{p\infty} f_{ \zeta, \infty},\mathcal A(G)) = \sum\limits_{\pi \subset \mathcal A(G)} \mathrm{Tr}(C_{(s, \lambda_1^{p_1}, \lambda_2^{p_2})}f_pf^{p\infty} f_{ \zeta, \infty},\pi)$$
 $$= \sum\limits_{\pi \subset \mathcal A(G)} \mathrm{Tr}(C_{(s, \lambda_1^{p_1}, \lambda_2^{p_2})}f_pf^{p\infty} f_{ \zeta, \infty},\pi)
 $$
  $$= \sum\limits_{\pi \subset \mathcal A(G)} \mathrm{Tr}(C_{(s, \lambda_1^{p_1}, \lambda_2^{p_2})}f_p,\pi_p) \mathrm{Tr}(f^{p\infty},\pi^{p\infty})\mathrm{Tr}(f_{ \zeta, \infty},\pi_{\infty}).  
 $$
 By the definition of $f_{ \zeta, \infty}$, we have $\mathrm{Tr}(f_{ \zeta, \infty},\pi_{\infty}) = N_{\infty}\sum_{i=1}^{\infty}(-1)^idim(H^i(\mathfrak g, K_{\infty};\pi_{\infty}\otimes \zeta))$. By assumption, we have $G(\mathbb Q_p) \cong \mathbb Q_p^{\times} \times GL_n(\mathbb Q_p)$, we write $ \pi_p \cong \sigma_{\pi,p} \otimes \rho_{\pi,p}$ where $\sigma_{\pi,p}$ (resp. $\rho_{\pi,p}$) is an admissible representation of $\mathbb Q_p$ (resp. $GL_n(\mathbb Q_p)$). By repeating the proof of Proposition 11 in \cite{Kret11}, we find that $\rho_{\pi,p}$ is the local component of a discrete automorphic representation of $GL_n(\mathbb A)$. 
 Also note that $$C_{(s, \lambda_1^{p_1}, \lambda_2^{p_2})}f_p = C_{(s, \lambda_1^{p_1}, \lambda_2^{p_2})}(X^\alpha \otimes f_{n \alpha s}) = X^{\alpha} \otimes C_{\lambda}f_{n\alpha s}$$ (we use $\lambda $ denote $(\lambda_1^{p_1}, \lambda_2^{p_2})$), then $$\mathrm{Tr}(C_{(s, \lambda_1^{p_1}, \lambda_2^{p_2})}f_p,\pi_p) = \mathrm{Tr}(X^{\alpha} \otimes C_{\lambda}f_{n\alpha s},\sigma_{\pi,p}\otimes \rho_{\pi,p}) = \mathrm{Tr}( X^{\alpha}, \sigma_{\pi,p})\mathrm{Tr}( C_{\lambda}f_{n\alpha s}, \rho_{\pi,p}).$$ By repeating the argument in pp.511-512 of \cite{Kret11}, we have that $\mathrm{Tr}( X^{\alpha}, \sigma_{\pi,p}) = \zeta_{\pi}^\alpha$ where $\zeta_{\pi} =\mathrm{Tr}( X, \sigma_{\pi,p}) $ is a root of unity. By Corollary 3.1, we have $\mathrm{Tr}( C_{\lambda}f_{n\alpha s}, \rho_{\pi,p})  = 0$ unless $\rho_{\pi,p}^{I_n} \not = 0$ (with $I_n$ defined before Corollary 3.1). Then by applying Theorem 2 in \cite{Kret11}, we find that $\rho_{\pi,p}$ is a semi-stable rigid representation of $GL_n(\mathbb Q_p)$. 
 
 By Proposition 3.3, we have $$\mathrm{Tr}( C_{\lambda}f_{n\alpha s}, \rho_{\pi,p}) = \mathrm{Tr}(C^{GL_{p_1}\times GL_{p_2}(\mathbb Q_p)}_{c}  (f_{p_1 \alpha s_1} \otimes f_{p_2 \alpha s_2}), (\rho_{\pi,p})_{N_{\lambda}}(\delta_{GL_n, P_{\lambda}}^{-\frac 12}) ).$$ By proposition 5.2, we have our theorem when $\alpha $ is big enough. And we may conclude our theorem for an arbitrary $\alpha \in \mathbb Z_{>0}$ by repeating the proof of Theorem 5.2.1 in \cite{Yachen24}. 
\end{proof}

\begin{remark}
Using the argument of Proposition 12 in \cite{Kret11}, we may derive a formula to compute the dimension of $S_b$ from Theorem 6.1. 
\end{remark}
\clearpage
\addcontentsline{toc}{section}{References}
\renewcommand\refname{References}
\bibliographystyle{plain}
\bibliography{ref}

\begin{thebibliography}{10}

\bibitem{Casselman801}
W.~Casselman.
\newblock {The unramified principal series of p-adic groups. I. The spherical function}.
\newblock {\em Compositio Mathematica}, 40(3), 1980.

\bibitem{Clo85}
Delorme~P. Clozel, L.
\newblock {Pseudo-coefficients et cohomologie des groupes de Lie r{\'e}ductifs r{\'e}els.}
\newblock {\em C. R. Acad. Sci. Paris S{\'e}r. I Math.}, 300(12):385--387, 1985.

\bibitem{Clo90}
Laurent Clozel.
\newblock {The fundamental lemma for stable base change}.
\newblock {\em Duke Mathematical Journal}, 61(1):255 -- 302, August 1990.

\bibitem{Del79}
Pierre Deligne.
\newblock {Vari{\'e}t{\'e}s de Shimura: interpr{\'e}tation modulaire et techniques de construction de mod{\`e}les canoniques}.
\newblock 1979.

\bibitem{Fox21}
Maria Fox and Naoki Imai.
\newblock {The Supersingular Locus of the Shimura Variety of GU (2, n-2)}.
\newblock {\em arXiv preprint arXiv:2108.03584}, 2021.

\bibitem{Fuji97}
Kazuhiro Fujiwara.
\newblock Rigid geometry, lefschetz-verdier trace formula and deligne's conjecture.
\newblock {\em Inventiones mathematicae}, 127(3):489--533, 1997.

\bibitem{Ho14}
Benjamin Howard and Georgios Pappas.
\newblock {On the supersingular locus of the GU (2, 2) Shimura variety}.
\newblock {\em Algebra Number Theory}, 8(7):1659--1699, 2014.

\bibitem{KO85}
Toshiyuki Katsura and Frans Oort.
\newblock {Supersingular abelian varieties of dimension two or three and class numbers}.
\newblock In {\em Algebraic Geometry, Sendai, 1985}, volume~10, pages 253--282. Mathematical Society of Japan, 1987.

\bibitem{Kis10}
Mark Kisin.
\newblock {Integral models for Shimura varieties of abelian type}.
\newblock {\em Journal of The American Mathematical Society - J AMER MATH SOC}, 23:967--1012, 10 2010.

\bibitem{SKZ21}
Mark Kisin, Sug Shin, and Yihang Zhu.
\newblock {The stable trace formula for Shimura varieties of abelian type}, 10 2021.

\bibitem{Kob}
Neal Koblitz.
\newblock {$p$-adic variation of the zeta-function over families of varieties defined over finite fields}.
\newblock {\em Compositio Mathematica}, 31(2):119--218, 1975.

\bibitem{Kot80}
Robert~E. Kottwitz.
\newblock {Orbital Integrals on GL3}.
\newblock {\em American Journal of Mathematics}, 102(2):327--384, 1980.

\bibitem{Kot85}
Robert~E. Kottwitz.
\newblock {Isocrystals with additional structure}.
\newblock {\em Compositio Mathematica}, 56(2):201--220, 1985.

\bibitem{kot90}
Robert~E. Kottwitz.
\newblock {Shimura varieties and $\lambda$-adic representations. In: Automorphic Forms, Shimura Varieties, and L-Functions}.
\newblock {\em Perspect. Math.}, 10:161--209, 1990.

\bibitem{Kot92}
Robert~E Kottwitz.
\newblock {On the $\lambda$-adic representations associated to some simple Shimura varieties}.
\newblock {\em Inventiones mathematicae}, 108(1):653--665, 1992.

\bibitem{kot92p}
Robert~E Kottwitz.
\newblock {Points on some Shimura varieties over finite fields}.
\newblock {\em Journal of the American Mathematical Society}, 5(2):373--444, 1992.

\bibitem{Kot97}
Robert~E. Kottwitz.
\newblock {Isocrystals with additional structure. II}.
\newblock {\em Compositio Mathematica}, 109(3):255–339, 1997.

\bibitem{Kret11}
Arno Kret.
\newblock {The basic stratum of some Simple Shimura Varieties}.
\newblock {\em Mathematische Annalen}, 356, 2011.

\bibitem{Kret12b}
Arno Kret.
\newblock {The trace formula and the existence of PEL type Abelian varieties modulo p}.
\newblock {\em arXiv: Number Theory}, 2012.

\bibitem{Kret15}
Arno Kret.
\newblock {Combinatorics of the basic stratum}.
\newblock {\em Documenta Mathematics}, 20(07), 2015.

\bibitem{Kret17}
Arno Kret.
\newblock Equidistribution in supersingular hecke orbits.
\newblock {\em Journal für die reine und angewandte Mathematik (Crelles Journal)}, 2017(733):25--54, 2017.

\bibitem{Lab99}
Jean-Pierre Labesse.
\newblock {Cohomologie, stabilisation et changement de base}.
\newblock {\em Ast\'erisque}, (257), 1999.

\bibitem{LO98}
Ke-Zheng Li and Frans Oort.
\newblock {Moduli of supersingular abelian varieties}.
\newblock 1998.

\bibitem{Yachen24}
Yachen Liu.
\newblock {An automorphic description of the zeta function of the basic stratum of certain Kottwitz varieties}.
\newblock {\em arXiv preprint arXiv:2411.01224}, 2024.

\bibitem{Minguez11}
Alberto Minguez.
\newblock {Unramified representations of unitary groups}.
\newblock {\em On the stabilization of the trace formula}, 1:389--410, 2011.

\bibitem{Morel08}
Sophie Morel.
\newblock {On the cohomology of certain non-compact Shimura varieties (with an appendix by Robert Kottwitz)}.
\newblock {\em arXiv preprint arXiv:0802.4451}, 2008.

\bibitem{RaRi96}
M.~Rapoport and M.~Richartz.
\newblock {On the classification and specialization of $F$-isocrystals with additional structure}.
\newblock {\em Compositio Mathematica}, 103(2):153--181, 1996.

\bibitem{Ra02}
Michael Rapoport.
\newblock {On the {Newton} stratification}.
\newblock In {\em S\'eminaire Bourbaki : volume 2001/2002, expos\'es 894-908}, number 290 in Ast\'erisque, pages 207--224. Soci\'et\'e math\'ematique de France, 2003.
\newblock talk:903.

\bibitem{Renard10}
David Renard.
\newblock {Repr{\'e}sentations des groupes r{\'e}ductifs p-adiques}.
\newblock 2010.

\bibitem{Rod86}
Fran\c{c}ois Rodier.
\newblock {Sur le caract\`ere de Steinberg}.
\newblock {\em Compositio Mathematica}, 59(2):147--149, 1986.

\bibitem{Tad86}
Marko Tadi\'c.
\newblock {Classification of unitary representations in irreducible representations of general linear group {(non-Archimedean} case)}.
\newblock {\em Annales scientifiques de l'\'Ecole Normale Sup\'erieure}, Ser. 4, 19(3):335--382, 1986.

\bibitem{Tad95}
Marko Tadic.
\newblock {On characters of irreducible unitary representations of general linear groups}.
\newblock {\em Abhandlungen aus dem Mathematischen Seminar der Universit{\"a}t Hamburg}, 65:341--363, 1995.

\bibitem{Vol10}
Inken Vollaard.
\newblock {The Supersingular Locus of the Shimura Variety for GU(1, s)}.
\newblock {\em Canadian Journal of Mathematics}, 62(3):668–720, 2010.

\bibitem{Vol11}
Inken Vollaard and Torsten Wedhorn.
\newblock {The supersingular locus of the Shimura variety of GU (1, n- 1) II}.
\newblock {\em Inventiones mathematicae}, 184(3):591--627, 2011.

\bibitem{Wed99}
Torsten Wedhorn.
\newblock Ordinariness in good reductions of {Shimura} varieties of {PEL-type}.
\newblock {\em Annales scientifiques de l'\'Ecole Normale Sup\'erieure}, Ser. 4, 32(5):575--618, 1999.

\bibitem{Wildon}
Mark Wildon.
\newblock {A model for the double cosets of Young subgroups}.
\newblock 2010.

\bibitem{Zel80}
A.~V. Zelevinsky.
\newblock {Induced representations of reductive ${\mathfrak {p}}$-adic groups. {II.} {On} irreducible representations of ${\rm GL}(n)$}.
\newblock {\em Annales scientifiques de l'\'Ecole Normale Sup\'erieure}, Ser. 4, 13(2):165--210, 1980.

\end{thebibliography}
\end{document}